\documentclass[times,sort&compress,3p]{elsarticle}

\journal{Journal of Multivariate Analysis}

\usepackage{amsmath,amssymb,amsthm,hyperref}
\usepackage[labelfont=bf]{caption} % DO NOT TOUCH (this is the standard for JMVA)
\usepackage{appendix} % For proper labeling of appendices
\usepackage{dsfont} % For the \ind \newcommand (indicator functions)
\usepackage{mathtools} % For \vcentcolon to define := and =:
\usepackage{booktabs} % For table options
\newcounter{pstep} % Define a new counter named 'pstep'
\newcommand{\step}[1]{%
\par\medskip\noindent% Start a new paragraph with space
\refstepcounter{pstep}% Increment counter and make it visible to \label
\textbf{Step \thepstep: #1.}% Format the text
}

\theoremstyle{plain}
\newtheorem{theorem}{Theorem}
\newtheorem{proposition}[theorem]{Proposition}
\newtheorem{lemma}[theorem]{Lemma}

\theoremstyle{definition}

\newtheorem{remark}{Remark}

\newcommand{\N}{\mathbb{N}}
\newcommand{\R}{\mathbb{R}}
\newcommand{\PP}{\mathsf{P}} % Russian style, do not change
\newcommand{\EE}{\mathsf{E}} % Russian style, do not change
\newcommand{\Var}{\mathsf{Var}} % Russian style, do not change
\newcommand{\Cov}{\mathsf{Cov}} % Russian style, do not change
\newcommand{\bb}[1]{\boldsymbol{#1}}
\newcommand{\OO}{\mathcal{O}}
\newcommand{\oo}{\mathrm{o}}
\newcommand{\rd}{\mathrm{d}}
\newcommand{\ind}{\mathds{1}}
\newcommand{\e}{\varepsilon}
\newcommand{\leqdef}{\vcentcolon=}
\newcommand{\tr}{\mathrm{tr}}
\newcommand{\etr}{\mathrm{etr}}

\allowdisplaybreaks

\begin{document}

\begin{frontmatter}

\title{A two-sample test for symmetric positive definite matrix distributions \\using Wishart kernel density estimators\vspace{-3mm}}

\author[a1]{Fr\'ed\'eric Ouimet\vspace{-2mm}}\ead{frederic.ouimet2@uqtr.ca}

\address[a1]{Universit\'e du Qu\'ebec \`a Trois-Rivi\`eres, Trois-Rivi\`eres, QC, Canada\vspace{-6mm}}

\begin{abstract}
We develop a nonparametric two-sample test for distributions supported on the cone of symmetric positive definite matrices. The procedure relies on the Wishart kernel density estimator (KDE) introduced by \citet{BelzileGenestOuimetRichards2025WishartKDE}, whose support-adaptive kernel alleviates boundary bias by remaining confined to the cone. Our test statistic is the rescaled integrated squared difference between two Wishart KDEs and can be expressed as a two-sample $V$-statistic via an explicit closed-form overlap of Wishart kernels, avoiding numerical integration. Under the null hypothesis of equal densities, we derive the asymptotic distribution in both the common shrinking-bandwidth and fixed-bandwidth regimes. The proposed method provides a kernel-based competitor to the empirical Laplace-transform two-sample test of \citet{Lukic2024Laplace}. Unlike the orthogonally invariant Hankel-transform test of \citet{LukicMilosevic2024AISM}, our statistic can detect alternatives that differ only through eigenvector structure, for instance, Wishart models with the same shape parameter and the same scale eigenvalues but different orientations. A comparative simulation study examines the empirical Type I error rates and power of the proposed and Laplace-transform tests.
\end{abstract}

\begin{keyword} %alphabetical order
Asymmetric kernel \sep asymptotics \sep boundary bias \sep change-point detection \sep density estimation \sep nonparametric estimation \sep positive definite matrix \sep smoothing \sep two-sample test \sep $V$-statistic \sep Wishart kernel
\MSC[2020]{Primary: 62G10, 62H15; Secondary: 62G05, 62G07, 62G20}
\end{keyword}

\end{frontmatter}

\vspace{-4mm}
\section{Introduction}\label{sec:intro}

Positive definite matrices arise throughout statistics and applied probability as natural summaries of dependence and variability. Prominent examples include covariance and precision matrices in multivariate analysis \citep{Muirhead1982,Anderson2003,GuptaNagar2000}, realized covariance matrices computed from high-frequency returns in finance \citep{AndersenBollerslevDieboldLabys2003,BarndorffNielsenShephard2004,GourierouxJasiakSufana2009}, diffusion tensors in brain imaging \citep{BasserMattielloLeBihan1994,BasserPierpaoli1996,PierpaoliJezzardBasserBarnettDiChiro1996,ConturoEtAl1999,Beaulieu2002}, and kernel (Gram) matrices in machine learning \citep{ScholkopfSmola2002,HofmannScholkopfSmola2008,RasmussenWilliams2006}. In some of these applications, one may be confronted with the fundamental inferential task of comparing two populations (or two time periods) of (symmetric) positive definite matrices. Statistically, this leads to a two-sample problem on the cone $\mathcal{S}_{++}^d$ of positive definite matrices: based on two $\mathcal{S}_{++}^d$-valued samples, one seeks to assess whether the corresponding generating laws coincide. This question has been approached through geometry-aware comparisons of Fréchet-type means \citep{OsborneEtAl2013,EllingsonEtAl2017,Schwartzman2016Lognormal,YouPark2021}, eigenstructure-focused procedures \citep{SchwartzmanDoughertyTaylor2010}, and more general distributional comparisons in diffusion tensor and brain connectivity settings \citep{WhitcherEtAl2007,YouPark2022}, as well as through recent distribution-level tests on $\mathcal{S}_{++}^d$ built from empirical integral transforms \citep{Lukic2024Laplace,LukicMilosevic2024AISM,LukicMilosevic2024ChangePoint}.

In the $\R^d$ setting, a classical and influential approach to two-sample testing is to compare nonparametric density estimators through an $L^2$-type discrepancy, typically the integrated squared difference between two kernel density estimators (KDEs). The formulation of such statistics, together with a detailed derivation of their null limiting distribution and practical considerations, was developed in \citet{AndersonHallTitterington1994}. A broader literature has developed related $L^2$-based procedures and variants. This includes multivariate one- and two-sample formulations based on $L^2$ distances \citep{AhmadCerrito1993}, unequal sample sizes \citep{Li1996}, shrinking- and fixed-bandwidth regimes, bootstrap calibration, and local power analysis \citep{Li1999}, extensions to mixed or conditional data \citep{LiMaasoumiRacine2009}, and multi-sample homogeneity tests \citep{BabiluaNadaraya2019}. In parallel, methodological work has emphasized that global two-sample rejections are often complemented by diagnostics that highlight where the densities differ, as in the local difference tools advocated by \citet{Duong2013}. These developments motivate the present work: we seek an analogue of the integrated-squared-difference KDE test on the cone $\mathcal{S}_{++}^d$ that respects the positive-definiteness constraint and avoids spill-over beyond the support.

A key difficulty in transporting classical KDE-based tests to $\mathcal{S}_{++}^d$ is that fixed symmetric kernels allocate mass outside the cone, creating boundary bias and complicating both theory and computation. To address this issue, we build on the Wishart KDE introduced by \citet{BelzileGenestOuimetRichards2025WishartKDE}, an asymmetric, support-adaptive KDE that remains confined to $\mathcal{S}_{++}^d$ and mitigates boundary effects by locally tuning the Wishart kernel parameters. Using this estimator, we propose a nonparametric two-sample test whose statistic is the rescaled integrated squared difference between two Wishart KDEs. A notable feature of the Wishart kernel is that the overlap integral of two kernels admits a closed form, allowing the test statistic to be expressed as a two-sample $V$-statistic and evaluated without numerical integration. Under the null hypothesis of equal densities, we derive asymptotic null distributions in two regimes: in a common shrinking-bandwidth setting we obtain a centered Gaussian limit with explicit centering and variance constants, while in a fixed-bandwidth setting we obtain a weighted chi-square series limit characterized by the spectrum of a covariance operator on $L^2(\mathcal{S}_{++}^d)$.

Integral-transform approaches provide a complementary line of work for distributional testing, including $L^2$-type Cram\'er--von Mises statistics built from integrated squared differences of empirical transforms. In the univariate nonnegative setting, empirical Hankel-transform methodology for goodness-of-fit problems was initiated by \citet{BaringhausTaherizadeh2010}; \citet{BaringhausTaherizadeh2013} later developed a Kolmogorov--Smirnov-type exponentiality test based on the same transform. For the two-sample problem, a Cram\'er--von Mises procedure based on weighted integrated squared differences of empirical Hankel transforms was studied in detail by \citet{BaringhausKolbe2015}, building on a two-sample statistic already noted in \citet{BaringhausTaherizadeh2010}. In the setting of positive definite matrices, a recent and particularly relevant contribution is the orthogonally invariant two-sample test of \citet{LukicMilosevic2024AISM}, which leverages orthogonally invariant Hankel transforms of matrix argument and provides an explicit asymptotic null distribution. This direction builds on the Wishart goodness-of-fit method based on the orthogonally invariant Hankel transform, as developed by \citet{Hadjicosta2019PhD} and \citet{MR4169380}. A central distinction, however, is that the resulting discrepancy in that test depends only on eigenvalue information, and therefore may have limited sensitivity to alternatives that differ primarily through eigenvector structure. By contrast, for $d \geq 2$, our Wishart-KDE-based two-sample test statistic retains information about relative orientation and can detect, for example, differences between Wishart laws that share the same shape parameter and the same eigenvalues of the scale matrix but have different orientations.

To the best of our knowledge, among procedures developed specifically for positive definite matrix distributions, the only direct competitor to our proposed test that targets equality of the full matrix distributions is the Laplace-transform two-sample test of \citet{Lukic2024Laplace}. There, the discrepancy is defined as a weighted integrated squared difference between empirical Laplace transforms, with integration taken with respect to a noncentral Wishart measure, leading again to a $V$-statistic representation and enabling resampling-based calibration.

Finally, two-sample discrepancies of Cram\'er--von Mises type naturally feed into change-point methodology via scan statistics that compare distributions before and after candidate split points. In the scalar and multivariate settings, a representative body of work constructs change-point tests from $L^2$ distances between empirical characteristic functions (including rank-based variants) and develops corresponding theory under independence and dependence; see \citet{HuskovaMeintanis2006ECF,HuskovaMeintanis2006Ranks,HlavkaHuskovaMeintanis2020Paired}. Closely related approaches include energy-based multiple change-point procedures \citep{MattesonJames2014Nonparametric}, as well as kernel-based segmentation and scan statistics formulated through feature-space criteria and reproducing kernel Hilbert space discrepancies \citep{GarreauArlot2018Consistent,LiXieDaiSong2019ScanB}. In the setting of positive definite matrices, \citet{LukicMilosevic2024ChangePoint} adapt the Hankel-transform two-sample discrepancy to change-point inference, underscoring how integral-transform and $L^2$ discrepancy ideas on $\mathcal{S}_{++}^d$ extend beyond the static two-sample problem.

The paper is organized as follows. Section~\ref{sec:definitions} introduces the proposed two-sample test statistic together with preliminary definitions and notation. Section~\ref{sec:main.results} contains the main theoretical results, where we derive its asymptotic null distribution in both the common shrinking-bandwidth regime (Theorem~\ref{thm:null.shrinking.bandwidth}) and the fixed-bandwidth regime (Theorem~\ref{thm:null.fixed.bandwidth}). Section~\ref{sec:power.study} presents a simulation study comparing the proposed test with the Laplace-transform test of \citet{Lukic2024Laplace}. Section~\ref{sec:outlook} summarizes our findings and outlines several directions for future research. Finally, the proofs of the main theoretical results are gathered in Appendix~\ref{app:proofs}, with some technical lemmas deferred to Appendix~\ref{app:tech.lemmas}.

\section{Definitions and notation}\label{sec:definitions}

Fix an integer $d \in \N = \{1,2,\ldots\}$. Let $\mathcal{S}^d$ denote the space of $d\times d$ real symmetric matrices and let $\mathcal{S}_{++}^d$ denote the cone of (symmetric) positive definite matrices. Throughout, $\tr(\cdot)$ denotes the trace, $\mathrm{etr}(\cdot) \equiv \exp\{\tr(\cdot)\}$ the exponential trace, and $|\cdot|$ the determinant.

Throughout, integration on $\mathcal{S}^d$ and $\mathcal{S}_{++}^d$ is with respect to coordinate Lebesgue measure. In particular, for $X = (X_{ij}) \in \mathcal{S}^d$, $\rd X = \prod_{1\leq i\leq j\leq d}\rd X_{ij}$. The multivariate gamma function $\Gamma_d$ is defined by
\begin{equation}\label{eq:multivariate.gamma}
\Gamma_d (\alpha)
= \int_{\mathcal{S}_{++}^d} \etr(-X) \, |X|^{\alpha - (d+1)/2} \, \rd X
= \pi^{d(d-1)/4} \prod_{i=1}^d \Gamma(\alpha - (i - 1)/2), \qquad \Re(\alpha) > (d - 1)/2.
\end{equation}
where $\Gamma_1(\cdot)$ reduces to the usual gamma function $\Gamma(\cdot)$.

The Wishart distribution with shape parameter $\nu \in (d - 1,\infty)$ and scale matrix $\Sigma \in \mathcal{S}_{++}^d$ has density, relative to $\rd X$,
\begin{equation}\label{eq:Wishart.kernel}
K_{\nu,\Sigma}(X)
 = \frac{|X|^{\nu/2 - (d+1)/2} \, \etr(-\Sigma^{-1}X/2)}{|2\Sigma|^{\nu/2} \, \Gamma_d(\nu/2)}, \qquad X\in \mathcal{S}_{++}^d.
\end{equation}
If a $d\times d$ random matrix $\mathfrak{X}$ follows this distribution, we write $\mathfrak{X}\sim \mathrm{Wishart}_d(\nu,\Sigma)$.

Let $\mathfrak{X}_1,\ldots,\mathfrak{X}_n$ be a random sample of independent and identically distributed (iid) $d\times d$ positive definite matrices with unknown density $f$ supported on $\mathcal{S}_{++}^d$. For a bandwidth parameter $b\in(0,\infty)$, the Wishart KDE of $f$ is given by
\begin{equation}\label{eq:Wishart.KDE}
\hat{f}_{n,b}(S) = \frac{1}{n} \sum_{t=1}^n K_{\nu(b,d), \, bS}(\mathfrak{X}_t), \qquad S\in \mathcal{S}_{++}^d,
\end{equation}
where $\nu(b,d) = 1/b+d+1$ for brevity.

This estimator was introduced by \citet{BelzileGenestOuimetRichards2025WishartKDE} (see also \citet{MR4358612} for a slightly different version) as a matrix-variate extension of the gamma KDE on $(0,\infty)$ proposed by \citet{Chen2000}. \citet{BelzileGenestOuimetRichards2025WishartKDE} established the asymptotic properties (mean squared error, uniform strong consistency of growing sequences of compact sets, and asymptotic normality) of the Wishart KDE \eqref{eq:Wishart.KDE} for strongly mixing observations.

The Wishart KDE provides an adaptive approach to density estimation on $\mathcal{S}_{++}^d$: by locally adjusting the Wishart kernel's shape and scale parameters at each evaluation point $S$, it mitigates boundary bias while remaining supported on $\mathcal{S}_{++}^d$. This adaptivity avoids the spill-over effect seen with fixed symmetric kernels near the boundary and ensures that $\smash{\hat{f}_{n,b}}$ stays confined to $\mathcal{S}_{++}^d$. Under the assumption that $f$ and its first- and second-order partial derivatives are bounded and uniformly continuous, \citet[Remark~3.2]{BelzileGenestOuimetRichards2025WishartKDE} derive a pointwise bias expansion of order $\OO(b)$ and describe how its leading term behaves along paths approaching the boundary. The specific parameterization above is chosen so that the kernel is peaked at the evaluation point $S$ and becomes increasingly concentrated around $S$ as $b \downarrow 0$. More precisely, as a density in the variable $X$, the map $X \mapsto K_{\nu(b,d), \, bS}(X)$ has mode $X = S$; see \citet[Eq.~(2.4)]{BelzileGenestOuimetRichards2025WishartKDE}.

In this section, a two-sample test is proposed using the Wishart KDE \eqref{eq:Wishart.KDE}. Let $f^{(1)}$ and $f^{(2)}$ be the respective density functions of two populations under study, each of which is supported on $\mathcal{S}_{++}^d$. In order to test the hypothesis that the two laws are equal, viz.,
\[
H_0 : f^{(1)} = f^{(2)} \qquad \text{versus} \qquad H_1 : f^{(1)} \neq f^{(2)},
\]
the two-sample test presented by \citet{AndersonHallTitterington1994} \citep[see also][]{BabiluaNadaraya2019} is adapted to our matrix setting. Let
\[
\mathfrak{X}^{(1)}_1,\ldots,\mathfrak{X}^{(1)}_{n_1}\stackrel{\mathrm{iid}}{\sim} f^{(1)}, \qquad
\mathfrak{X}^{(2)}_1,\ldots,\mathfrak{X}^{(2)}_{n_2}\stackrel{\mathrm{iid}}{\sim} f^{(2)},
\]
be two independent random samples from $f^{(1)}$ and $f^{(2)}$, respectively, where
\begin{equation}\label{eq:n1.n2.n}
n_1 + n_2 = n, \qquad \frac{n_1}{n}\stackrel{n \to \infty}{\longrightarrow} \varrho, \qquad \varrho \in (0,1).
\end{equation}
For all $\ell \in \{1,2\}$, let
\begin{equation}\label{eq:Donald.1}
\hat{f}_{n_{\ell}, b_{\ell}}^{(\ell)} (S)
 = \frac{1}{n_{\ell}} \sum_{i=1}^{n_{\ell}} K_{\nu(b_{\ell},d), \, b_{\ell} S}(\mathfrak{X}_i^{(\ell)}), \qquad S\in \mathcal{S}_{++}^d,
\end{equation}
be the Wishart KDE for $f^{(\ell)}$. Our proposed test statistic for testing $H_0$ is
\begin{equation}\label{eq:Donald.2}
T(n_1,n_2)
= \frac{n_1 n_2}{n} \int_{\mathcal{S}_{++}^d} \left\{\hat{f}_{n_1,b_1}^{(1)}(S) - \hat{f}_{n_2,b_2}^{(2)}(S)\right\}^2 \rd S \\
= \sum_{i=1}^2 n_i \int_{\mathcal{S}_{++}^d} \Bigg\{\hat{f}_{n_i,b_i}^{(i)}(S) - \sum_{\ell=1}^2 \frac{n_{\ell}}{n} \hat{f}_{n_{\ell},b_{\ell}}^{(\ell)}(S)\Bigg\}^2 \rd S.
\end{equation}

\begin{remark}\label{rem:bandwidth.selection}
For bandwidth selection in practical applications, a simple choice is to apply either the least-squares cross-validation (LSCV) or likelihood cross-validation (LCV) method developed by \citet{BelzileGenestOuimetRichards2025WishartKDE} to the pooled sample obtained by concatenating the two samples, and then set $b_1 = b_2$ equal to the selected bandwidth. This gives the two KDEs the same smoothing level and, under $H_0$, selects the bandwidth from the total sample drawn from the common density. Moreover, the selected bandwidth depends only on the pooled observations and not on their sample labels. This specifies the numerical value of the test statistic; its null calibration is described next.
\end{remark}

\begin{remark}\label{rem:permutation.calibration}
Once the common bandwidth has been selected, the test can be calibrated by permutation. Let $T_{\mathrm{obs}}$ denote the observed statistic. For every $\mathcal{I} \subseteq \{1,\ldots,n\}$ with $|\mathcal{I}| = n_1$, let $T_{\mathcal{I}}$ denote the statistic obtained by assigning the observations indexed by $\mathcal{I}$ to the first sample and the remaining observations to the second, while keeping the selected bandwidth fixed. The full permutation $p$-value is
\[
p_{\mathrm{perm}} \leqdef \binom{n}{n_1}^{-1} \sum_{\substack{\mathcal{I} \subseteq \{1,\ldots,n\} \\ |\mathcal{I}| = n_1}} \ind_{\{T_{\mathcal{I}} \geq T_{\mathrm{obs}}\}}.
\]
Under $H_0$, the sample labels are exchangeable conditionally on the pooled observations, while the pooled-sample bandwidth is unchanged by relabeling. Consequently, rejection when $p_{\mathrm{perm}} \leq \alpha$ defines a finite-sample valid test of level at most $\alpha$. In practice, the full permutation distribution can be approximated using $B \in \N$ allocations sampled independently and uniformly. If $T^{(1)},\ldots,T^{(B)}$ denote the corresponding statistics, the Monte Carlo permutation $p$-value is
\begin{equation}\label{eq:add.one.p.value}
\hat{p}_{\mathrm{perm}} \leqdef \frac{1 + \sum_{m=1}^B \ind_{\{T^{(m)} \geq T_{\mathrm{obs}}\}}}{B + 1}.
\end{equation}
This calibration applies whether the common bandwidth shrinks with the sample size or is held fixed, and it avoids estimating the unknown quantities appearing in the corresponding asymptotic null distributions derived below.
\end{remark}

Throughout the paper, the following notational conventions are adopted. The notation $u = \OO(v)$ means that $\limsup |u / v| \leq C < \infty$ as $n \to \infty$ or $b \downarrow 0$, depending on the context. The constant $C$ may depend on the target density $f$, the dimension $d$, and any other parameters explicitly held fixed in the relevant statement, but not on the asymptotic variables or on variables over which the estimate is asserted uniformly, unless such dependence is explicitly written as a subscript. The alternative notation $u \ll v$ is also used to mean $u,v \geq 0$ and $u = \OO(v)$, with the same subscript rule. If both $u \ll v$ and $u \gg v$ hold, we write $u \asymp v$. Similarly, the notation $u = \oo(v)$ means that $\lim |u / v| = 0$ as $n \to \infty$ or $b \downarrow 0$. The symbol $\rightsquigarrow$ denotes convergence in distribution. The notation $\|\cdot\|_F$ denotes the Frobenius norm. In the shrinking-bandwidth regime, $b = b(n)$ is implicitly a function of the number of observations, while $b$ is held fixed in the fixed-bandwidth regime. In Appendix~\ref{app:tech.lemmas}, the limit $b \downarrow 0$ is considered independently of $n$.

For any $d,k \in \N$, the shorthands
\[
r \equiv r(d) = \frac{d(d + 1)}{2} \qquad \text{and} \qquad [k] = \{1,\ldots,k\}
\]
will be used frequently.

\section{Main results}\label{sec:main.results}

This section collects the main analytical results for the proposed two-sample statistic $T(n_1,n_2)$ introduced in \eqref{eq:Donald.2}. We first show that the integrated squared difference of the two Wishart KDEs admits an explicit closed-form evaluation, so that $T(n_1,n_2)$ can be computed without any numerical integration over $\mathcal{S}_{++}^d$. We then derive the asymptotic null distribution in two complementary bandwidth regimes: (i) a common shrinking-bandwidth setting ($b_1 = b_2 = b_n\downarrow 0$), which yields a centered Gaussian limit after an explicit centering and scaling, and (ii) a fixed-bandwidth setting ($b_1 = b_2 = b$ fixed), which yields a weighted $\chi^2$ series limit described by the spectrum of a covariance operator on $L^2(\mathcal{S}_{++}^d)$. See Theorem~\ref{thm:null.shrinking.bandwidth} for (i) and Theorem~\ref{thm:null.fixed.bandwidth} for (ii).

We begin by rewriting $T(n_1,n_2)$ in a form that makes both computation and subsequent asymptotic analysis transparent. The key observation is that the overlap integral of two Wishart kernels can be evaluated in closed form, which turns the $L^2$-discrepancy between two Wishart KDEs into an explicit two-sample $V$-statistic of order $2$. This representation will be used throughout the remainder of the paper.

\begin{proposition}\label{prop:test.statistic}\addcontentsline{toc}{subsection}{Proposition~\theproposition}
For $i,\ell\in \{1,2\}$, let $\delta_{i,\ell}$ denote the usual Kronecker delta, i.e., $\delta_{i,\ell}=1$ or $0$ according to whether $i = \ell$ or $i \neq \ell$, and let $a_{i,\ell} = \delta_{i,\ell} - (n_{\ell}/n)$.
We have
\[
\begin{aligned}
T(n_1,n_2)
&= \frac{1}{2^{r(d)}} \sum_{i=1}^2 n_i \sum_{\ell_1=1}^2 \sum_{\ell_2=1}^2 \frac{a_{i,\ell_1} a_{i,\ell_2}}{n_{\ell_1} n_{\ell_2}} b_{\ell_1}^{-d/(2 b_{\ell_1}) - r(d)} b_{\ell_2}^{-d/(2 b_{\ell_2}) - r(d)} \frac{\Gamma_d(1/(2 b_{\ell_1}) + 1/(2 b_{\ell_2}) + (d+1)/2)}{\Gamma_d(1/(2 b_{\ell_1}) + (d+1)/2) \Gamma_d(1/(2 b_{\ell_2}) + (d+1)/2)} \\
&\hspace{55mm}\times \sum_{i_1=1}^{n_{\ell_1}} \sum_{i_2=1}^{n_{\ell_2}} \frac{|\mathfrak{X}_{i_1}^{(\ell_1)}|^{1/(2 b_{\ell_1})} |\mathfrak{X}_{i_2}^{(\ell_2)}|^{1/(2 b_{\ell_2})}}{|b_{\ell_1}^{-1} \mathfrak{X}_{i_1}^{(\ell_1)} + b_{\ell_2}^{-1} \mathfrak{X}_{i_2}^{(\ell_2)}|^{1/(2 b_{\ell_1}) + 1/(2 b_{\ell_2}) + (d+1)/2}}.
\end{aligned}
\]
\end{proposition}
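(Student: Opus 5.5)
We start from the second expression for $T(n_1,n_2)$ in \eqref{eq:Donald.2}. Note that $\hat{f}^{(i)}_{n_i,b_i} - \sum_{\ell} (n_\ell/n)\hat{f}^{(\ell)}_{n_\ell,b_\ell} = \sum_{\ell=1}^2 a_{i,\ell}\hat{f}^{(\ell)}_{n_\ell,b_\ell}$ with $a_{i,\ell}=\delta_{i,\ell}-n_\ell/n$. Squaring, inserting the definition \eqref{eq:Donald.1}, and interchanging the finite sums with the integral gives
\[
T(n_1,n_2)=\sum_{i=1}^2 n_i\sum_{\ell_1,\ell_2=1}^2\frac{a_{i,\ell_1}a_{i,\ell_2}}{n_{\ell_1}n_{\ell_2}}\sum_{i_1=1}^{n_{\ell_1}}\sum_{i_2=1}^{n_{\ell_2}} I_{\ell_1,\ell_2}\bigl(\mathfrak{X}^{(\ell_1)}_{i_1},\mathfrak{X}^{(\ell_2)}_{i_2}\bigr),
\]
where $I_{\ell_1,\ell_2}(X,Y)=\int_{\mathcal{S}_{++}^d}K_{\nu(b_{\ell_1},d),\,b_{\ell_1}S}(X)\,K_{\nu(b_{\ell_2},d),\,b_{\ell_2}S}(Y)\,\rd S$. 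The whole proof therefore reduces to evaluating this overlap integral in closed form. The integrand is nonnegative, so Tonelli justifies the interchange, and finiteness follows from the computation itself.

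\textbf{Evaluating the overlap.} Fix $\ell$ and write $\nu=\nu(b_\ell,d)$, so that $\nu/2-(d+1)/2=1/(2b_\ell)$ and $\nu/2=1/(2b_\ell)+(d+1)/2$. Viewed as a function of $S$, the kernel is
\[
K_{\nu,\,b_\ell S}(X)=\frac{|X|^{1/(2b_\ell)}\,\etr\{-S^{-1}X/(2b_\ell)\}}{(2b_\ell)^{d\nu/2}\,|S|^{\nu/2}\,\Gamma_d(\nu/2)}.
\]
Multiplying the two kernels gives $|S|^{-(\nu_1+\nu_2)/2}\,\etr\{-S^{-1}A/2\}$, where $A=b_{\ell_1}^{-1}X+b_{\ell_2}^{-1}Y$. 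We then substitute $W=S^{-1}$. The Jacobian on $\mathcal{S}_{++}^d$ is $\rd S=|W|^{-(d+1)}\rd W$, so the integral becomes
\[
\int_{\mathcal{S}_{++}^d}|W|^{(\nu_1+\nu_2)/2-(d+1)}\,\etr(-WA/2)\,\rd W .
\]
This has the form of the multivariate gamma integral \eqref{eq:multivariate.gamma} with $\alpha=(\nu_1+\nu_2)/2-(d+1)/2=1/(2b_{\ell_1})+1/(2b_{\ell_2})+(d+1)/2$, which exceeds $(d-1)/2$. The standard scaled version, obtained by the change of variables $W\mapsto (A/2)^{-1/2}W(A/2)^{-1/2}$ with Jacobian $|A/2|^{-(d+1)/2}$, yields $\Gamma_d(\alpha)\,|A/2|^{-\alpha}=\Gamma_d(\alpha)\,2^{d\alpha}|A|^{-\alpha}$.

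\textbf{Collecting constants.} This is the step most prone to error, though it involves no conceptual obstacle. Each normalizing factor splits as $(2b_\ell)^{-d\nu_\ell/2}=2^{-d\nu_\ell/2}\,b_\ell^{-d/(2b_\ell)-r(d)}$, using $d(d+1)/2=r(d)$. For the powers of two, the total exponent is $d\alpha-d\nu_1/2-d\nu_2/2=-d(d+1)/2=-r(d)$, which produces the prefactor $2^{-r(d)}$. The remaining factors are the gamma ratio $\Gamma_d(\alpha)/\{\Gamma_d(\nu_1/2)\Gamma_d(\nu_2/2)\}$, the numerator $|X|^{1/(2b_{\ell_1})}|Y|^{1/(2b_{\ell_2})}$, and the denominator $|A|^{\alpha}$. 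Together these match the summand in the proposition exactly. Substituting back into the expansion above completes the argument.
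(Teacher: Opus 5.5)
Your proposal is correct and follows the paper's proof essentially step for step. Both arguments use the $a_{i,\ell}$ expansion to reduce everything to the overlap integral, invert $S\mapsto S^{-1}$ with Jacobian $|W|^{-(d+1)}$, evaluate the congruence-scaled multivariate gamma integral with $\alpha=1/(2b_{\ell_1})+1/(2b_{\ell_2})+(d+1)/2$, and do the same bookkeeping that leaves the net factor $2^{-r(d)}$.
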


Next, we consider the common shrinking-bandwidth setting. In this regime the statistic diverges under $H_0$ because it contains an integrated ``self-overlap'' contribution coming from squaring the KDEs; accordingly, we must subtract an explicit centering term of order $b_n^{-r/2}$ and rescale by $b_n^{r/4}$ to obtain a non-degenerate limit. The resulting fluctuation is asymptotically Gaussian, with variance proportional to a weighted $L^2$-moment of the common density $f$.

\begin{theorem}[Null limit in the common shrinking-bandwidth regime]\label{thm:null.shrinking.bandwidth}\addcontentsline{toc}{subsection}{Theorem~\thetheorem}
Assume $H_0$ holds, i.e., $f^{(1)} = f^{(2)} \equiv f$. Let the bandwidths be common and shrinking: $b_1(n) = b_2(n) \equiv b_n \downarrow 0$ as $n \to \infty$, and assume
\begin{equation}\label{eq:nb.condition}
n \, b_n^{r/2}\longrightarrow \infty.
\end{equation}
Assume furthermore that $f$ is bounded and continuous on $\mathcal{S}_{++}^d$, and define the constants
\begin{equation}\label{eq:moment.conditions}
V_{1,1} \leqdef \int_{\mathcal{S}_{++}^d} f(S) \, |S|^{-(d+1)/2} \, \rd S < \infty, \qquad
V_{2,2} \leqdef \int_{\mathcal{S}_{++}^d} f(S)^2 \, |S|^{-(d+1)/2} \, \rd S < \infty,
\end{equation}
(the notation here comes from the more general notation used in \eqref{eq:Mp.diag.def} and \eqref{eq:M2p.offdiag.def} of Appendix~\ref{app:tech.lemmas}) and assume in addition that
\begin{equation}\label{eq:moment.conditions.2}
\int_{\mathcal{S}_{++}^d} f(S) \, |S|^{-(d+1)} \, \rd S < \infty, \qquad
\int_{\mathcal{S}_{++}^d} f(S)^2 \, |S|^{-3(d+1)/2} \, \rd S < \infty.
\end{equation}
Define the $b$-dependent centering constant
\[
A_d(b) \leqdef b^{-r} \, 2^{-d/b-2r} \, \frac{\Gamma_d\left(1/b + (d+1)/2\right)} {\Gamma_d\left(1/(2b) + (d+1)/2\right)^2}, \qquad b > 0,
\]
and the dimensional constants
\[
c_d \leqdef 2^{-d(d + 2)/2} \, \pi^{-r/2}, \qquad
I_d \leqdef \int_{\mathcal{S}^{d}} \exp\left\{-\frac{\tr(U^2)}{4}\right\} \, \rd U, \qquad
v_d \leqdef 2 \, c_d^2 \, I_d.
\]
Then, as $n \to \infty$,
\begin{equation}\label{eq:clt.null.shrinking}
b_n^{r/4}\left\{T(n_1,n_2) - A_d(b_n) \, V_{1,1}\right\} \rightsquigarrow \mathcal{N}\left(0, \, v_d \, V_{2,2}\right).
\end{equation}
Moreover, the Gaussian integral $I_d$ evaluates explicitly as $I_d = (2\sqrt{\pi})^{d} \, (\sqrt{2\pi})^{d(d - 1)/2}$, and hence $v_d = 2^{-(3r + d - 2)/2} \, \pi^{-r/2}$.
\end{theorem}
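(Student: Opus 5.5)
We follow the classical route of \citet{AndersonHallTitterington1994}: split the $V$-statistic of Proposition~\ref{prop:test.statistic} into a diagonal part, which produces the centering, and a degenerate off-diagonal $U$-statistic, which produces the Gaussian fluctuation. Write $K_b(X,S) = K_{\nu(b,d),\,bS}(X)$ and define the overlap kernel $\kappa_b(X,Y) = \int_{\mathcal{S}_{++}^d} K_b(X,S)K_b(Y,S)\,\rd S$; Proposition~\ref{prop:test.statistic} gives it in closed form. Pool the samples as $Z_1,\ldots,Z_n$ with labels, and attach weights $w_j = 1/n_1$ or $-1/n_2$ according to the sample of $Z_j$. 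Then
\[
T(n_1,n_2) = \frac{n_1n_2}{n}\sum_{j,k} w_jw_k\,\kappa_b(Z_j,Z_k).
\]
Under $H_0$ all $Z_j$ are iid with density $f$.

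\textbf{Diagonal part.} The diagonal part is $D_n = \frac{n_1n_2}{n}\sum_j w_j^2\,\kappa_b(Z_j,Z_j)$. Its weights satisfy $\frac{n_1n_2}{n}\bigl(n_1/n_1^2+n_2/n_2^2\bigr)=1$, so $D_n$ is an average of the $\kappa_b(Z_j,Z_j)$. From the closed form, $\kappa_b(X,X) = A_d(b)\,|X|^{-(d+1)/2}$, since the determinant ratio with $X=Y$ collapses exactly. Hence $\EE D_n = A_d(b)V_{1,1}$.

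For the variance we use Stirling's formula for $\Gamma_d$, which gives $A_d(b)\asymp b^{-r/2}$; the asymptotic form is $A_d(b) \sim c\,b^{-r/2}$. Therefore
\[
\Var(D_n) \ll n^{-1}A_d(b)^2\int f\,|S|^{-(d+1)}\,\rd S \ll n^{-1}b^{-r},
\]
using the first moment condition in \eqref{eq:moment.conditions.2}. Consequently $b^{r/4}(D_n-\EE D_n)=\OO_\PP(n^{-1/2}b^{-r/4})=\oo_\PP(1)$ by \eqref{eq:nb.condition}, and the diagonal part contributes exactly the centering.

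\textbf{Off-diagonal part.} The off-diagonal part is $U_n = \frac{n_1n_2}{n}\sum_{j\neq k}w_jw_k\kappa_b(Z_j,Z_k)$. We Hoeffding-decompose it by setting $\bar\kappa(x) = \EE\kappa_b(x,Z)$, $\mu=\EE\bar\kappa(Z)$ and $h(x,y) = \kappa_b(x,y)-\bar\kappa(x)-\bar\kappa(y)+\mu$. Because $\sum_j w_j=0$, the constant term and the linear projections almost cancel; the leftover is $-\sum_j w_j^2\{2\bar\kappa(Z_j)-\mu\}$ times $n_1n_2/n$. This leftover has variance $\OO(n^{-1})$ because $\bar\kappa$ is bounded (indeed $\bar\kappa(x)=\int K_b(x,S)\hat{}\,\EE K_b(Z,S)\,\rd S \le \|f\|_\infty$-type bounds), so it is negligible. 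What remains is the degenerate weighted $U$-statistic $\frac{n_1n_2}{n}\sum_{j\neq k}w_jw_kh(Z_j,Z_k)$.

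For this degenerate statistic we apply the martingale CLT for degenerate $U$-statistics, in the form of Hall (1984) or de Jong, extended to deterministic weights. Its variance is $2\bigl(\frac{n_1n_2}{n}\bigr)^2\sum_{j\neq k}w_j^2w_k^2\,\EE h^2$. The weight sum equals $(n_1n_2/n)^2\{(\sum_j w_j^2)^2-\sum_j w_j^4\}\to1$. The key asymptotic is
\[
\EE h^2 \sim \EE\kappa_b^2 = \iint \kappa_b(x,y)^2f(x)f(y)\,\rd x\,\rd y \sim c_d^2\,I_d\,b^{-r/2}\,V_{2,2}.
\]
This is a Laplace-type expansion of the closed form. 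Near the diagonal, $\kappa_b(x,y)$ behaves like $A_d(b)|x|^{-(d+1)/2}\exp\{-\tr((x^{-1/2}(y-x)x^{-1/2})^2)/(8b)\}$. Substituting $y = x + \sqrt{b}\,x^{1/2}Ux^{1/2}$, which has Jacobian $b^{r/2}|x|^{(d+1)/2}$, produces $I_d$ and the weight $f(x)^2|x|^{-(d+1)/2}$. Continuity of $f$ and dominated convergence, with the second condition in \eqref{eq:moment.conditions.2} supplying the domination away from the diagonal and near the boundary, turn this into $V_{2,2}$. Multiplying by $b^{r/2}$ then gives the limiting variance $v_dV_{2,2}$.

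\textbf{CLT conditions and the main obstacle.} Hall's conditions require $\EE G^2 = \oo\bigl((\EE h^2)^2\bigr)$ with $G(x,y)=\EE h(Z,x)h(Z,y)$, and $n^{-1}\EE h^4 = \oo\bigl((\EE h^2)^2\bigr)$. We verify both by the same change of variables. The first reduces to a triple-overlap integral that is $\OO(b^{-r/2})$ against $(\EE h^2)^2\asymp b^{-r}$. For the second, $\EE h^4 \ll b^{-3r/2}$ by the same local scaling, so $n^{-1}\EE h^4 = \oo(b^{-r})$ by \eqref{eq:nb.condition}. We expect the main obstacle to be making these Laplace asymptotics of the overlap kernel rigorous uniformly over $\mathcal{S}_{++}^d$, especially near the boundary where $|x|\to0$. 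This is exactly where the weighted moment conditions enter; we would isolate these estimates as technical lemmas, in the style of Appendix~\ref{app:tech.lemmas}.

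Finally, the explicit value of $I_d$ comes from $\tr(U^2)=\sum_i U_{ii}^2+2\sum_{i<j}U_{ij}^2$. The $d$ diagonal coordinates give $d$ Gaussian integrals, each equal to $2\sqrt\pi$, and the $d(d-1)/2$ off-diagonal coordinates give integrals each equal to $\sqrt{2\pi}$. Substituting this $I_d$ into $v_d = 2c_d^2I_d$ yields the stated expression.
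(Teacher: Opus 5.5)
Your outline is essentially the paper's proof: the same pooled weighted quadratic form in $\kappa_b$, Hoeffding centering (your bounded leftover $-\frac{n_1n_2}{n}\sum_j w_j^2\{2\bar\kappa(Z_j)-\mu\}$ is exactly the gap between your $\kappa_b$-diagonal and the paper's $h_b$-diagonal, i.e.\ the $-m_{b,0}$ in $\tau_b=A_d(b)V_{1,1}-m_{b,0}$, and is killed by $b^{r/4}$), diagonal negligibility via $\int f|S|^{-(d+1)}\,\rd S<\infty$, de Jong's CLT with the fourth-moment and cycle conditions, and the localization $Y=X^{1/2}(I+\sqrt{b}\,U)X^{1/2}$ yielding $c_d^2I_dV_{2,2}$. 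The one thing to correct is how the hypotheses enter the lemmas you defer. The variance limit needs only $V_{2,2}$: the paper handles $X$ outside compacts by Cauchy--Schwarz against the $X$-independent quantity $M_{d,p}(b)=\int\kappa_b(X,Y)^p|Y|^{(p-1)(d+1)/2}\,\rd Y\ll b^{-(p-1)r/2}$. The second condition in \eqref{eq:moment.conditions.2} is instead what that same device needs for $\EE h_b^4\ll b^{-3r/2}$; a naive bound $f\le\|f\|_\infty$ in your ``same local scaling'' would require the unassumed $\int f|S|^{-3(d+1)/2}\,\rd S<\infty$. The paper also bounds the cycle term not by a direct overlap computation but by $\|\mathcal{H}_b\|_{\mathrm{op}}^2\|\mathcal{H}_b\|_{\mathrm{HS}}^2$ via a Schur test resting on $\int\kappa_b(X,Y)\,\rd Y=1+\OO(b)$. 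That same beta-integral fact is what actually makes your $\bar\kappa$ bounded, since $S\mapsto K_b(x,S)$ is not a probability density.
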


\begin{remark}\label{rem:moment.conditions}
The integrability conditions in \eqref{eq:moment.conditions}--\eqref{eq:moment.conditions.2} principally control the behavior of $f$ near the singular boundary of $\mathcal{S}_{++}^d$, where the negative powers of the determinant diverge. Since $f$ is bounded, $V_{2,2} \leq \|f\|_\infty V_{1,1}$, so the two conditions in \eqref{eq:moment.conditions} amount to the single negative determinant moment $V_{1,1} = \EE\{|\mathfrak{X}|^{-(d + 1)/2}\} < \infty$. For example, all the conditions in \eqref{eq:moment.conditions}--\eqref{eq:moment.conditions.2} are automatic for a bounded target density supported on $\{S \in \mathcal{S}_{++}^d : \lambda_{\min}(S) \geq \e\}$ for some $\e > 0$. They also cover standard full-support models. Specifically, every inverse Wishart density $f(S) \propto |S|^{-(\nu + d + 1)/2}\etr(-\Psi S^{-1}/2)$, with $\nu > d - 1$ and $\Psi \in \mathcal{S}_{++}^d$, satisfies all the integrability conditions in \eqref{eq:moment.conditions}--\eqref{eq:moment.conditions.2}; for the Wishart density $f = K_{\nu,\Sigma}$, \eqref{eq:moment.conditions} holds when $\nu > 2d$, and all those integrability conditions hold when $\nu > 3d + 1$. These claims follow directly from \eqref{eq:multivariate.gamma}, after the change of variables $S \mapsto S^{-1}$ in the inverse Wishart case.
\end{remark}

\begin{remark}\label{rem:nb.condition}
If $b_n = n^{-\gamma}$, condition \eqref{eq:nb.condition} is equivalent to $\gamma < 2/r = 4/[d(d + 1)]$, so the bandwidth must decrease increasingly slowly as $d$ grows. The MSE-optimal order $b_n \asymp n^{-2/(r + 4)}$ derived by \citet{BelzileGenestOuimetRichards2025WishartKDE} satisfies this condition, but then $n b_n^{r/2} \asymp n^{4/(r + 4)}$, whose growth becomes slow in moderate dimension. Thus the shrinking-bandwidth Gaussian approximation may require a large sample, and bandwidths substantially smaller than the density-estimation order should be treated cautiously; the fixed-bandwidth limit considered in Theorem~\ref{thm:null.fixed.bandwidth} below provides an alternative that does not require \eqref{eq:nb.condition}.
\end{remark}

\begin{remark}\label{rem:fixed.alternatives}
The uniform strong consistency result of \citet[Theorem~3.4]{BelzileGenestOuimetRichards2025WishartKDE} supports consistency against fixed alternatives in the common shrinking-bandwidth regime. Suppose that its conditions hold for each population and that $b_n \asymp n^{-2/(r + 4)}$. If $H_1$ holds, continuity of $f^{(1)}$ and $f^{(2)}$ ensures that there exists a compact set $\mathcal{K} \subseteq \mathcal{S}_{++}^d$ such that
\[
D_{\mathcal{K}} \leqdef \int_{\mathcal{K}}\left\{f^{(1)}(S) - f^{(2)}(S)\right\}^2 \, \rd S > 0.
\]
Since $n_1/n \to \varrho$ and $n_2/n \to 1 - \varrho$, uniform strong consistency on expanding compact sets yields
\[
\int_{\mathcal{K}}\left\{\hat{f}_{n_1,b_n}^{(1)}(S) - \hat{f}_{n_2,b_n}^{(2)}(S)\right\}^2 \, \rd S \longrightarrow D_{\mathcal{K}}, \qquad \text{a.s.}
\]
Consequently, \eqref{eq:Donald.2} gives
\[
\liminf_{n \to \infty}\frac{T(n_1,n_2)}{n} \geq \varrho(1 - \varrho)D_{\mathcal{K}} > 0, \qquad \text{a.s.}
\]
Thus $T(n_1,n_2)$ grows at least linearly under such a fixed alternative. Under $H_0$, Theorem~\ref{thm:null.shrinking.bandwidth} suggests the oracle asymptotic level-$\alpha$ upper-tail critical value
\[
c_{n,\alpha} = A_d(b_n) \, V_{1,1} + b_n^{-r/4}(v_d \, V_{2,2})^{1/2} z_{1-\alpha},
\]
where $z_{1-\alpha}$ is the $(1-\alpha)$-quantile of the standard normal distribution. By \eqref{eq:Adb.asymptotic}, the first term has order $b_n^{-r/2}$, whereas the second has the smaller order $b_n^{-r/4}$. Hence $c_{n,\alpha} = \OO(b_n^{-r/2}) = \oo(n)$, where the last relation follows directly from \eqref{eq:nb.condition}. Thus the statistic is asymptotically separated from the null scale, and any upper-tail calibration whose critical value is of smaller order than $n$ in probability has power tending to one against the fixed alternatives considered above.
\end{remark}

Finally, we turn to the fixed-bandwidth setting, where $b$ is held constant as $n \to \infty$. In contrast to the common shrinking-bandwidth regime, the statistic $T(n_1,n_2)$ no longer requires an explicit diverging centering: it converges in distribution to the squared norm of a Gaussian element in the Hilbert space. Equivalently, the limit can be written as an (almost surely convergent) weighted $\chi^2$ series limit described by the spectrum of a covariance operator.

\begin{theorem}[Fixed-bandwidth null limit as a weighted chi-square series]\label{thm:null.fixed.bandwidth}\addcontentsline{toc}{subsection}{Theorem~\thetheorem}
Assume $H_0$ holds, i.e., $f^{(1)} = f^{(2)} \equiv f$. Fix the bandwidth $b \in (0,\infty)$. Let $\mathbb{H} \leqdef L^2(\mathcal{S}_{++}^d,\rd S)$ with inner product $\langle g,h\rangle_{\mathbb{H}} = \int_{\mathcal{S}_{++}^d} g(S)h(S) \, \rd S$. Define the feature map
\[
\phi_b(X)(S) \leqdef K_{\nu(b,d), \, bS}(X), \qquad X,S \in \mathcal{S}_{++}^d,
\]
and let $\mathfrak{X}$ be a random matrix with density $f$ supported on $\mathcal{S}_{++}^d$. Assume that
\begin{equation}\label{eq:fixed.b.moment}
V_{1,1} = \int_{\mathcal{S}_{++}^d} f(S) \, |S|^{-(d + 1)/2} \, \rd S < \infty.
\end{equation}
Then $\phi_b(\mathfrak{X})$ is a square-integrable $\mathbb{H}$-valued random element. In particular, $\mu_b \leqdef \EE\{\phi_b(\mathfrak{X})\} \in \mathbb{H}$ is well defined, and the covariance operator $\Sigma_b:\mathbb{H} \to \mathbb{H}$,
\begin{equation}\label{eq:Sigma.b}
\Sigma_b \, g \leqdef \EE\big[\langle \phi_b(\mathfrak{X}) - \mu_b, g\rangle_{\mathbb{H}} (\phi_b(\mathfrak{X}) - \mu_b)\big], \qquad g \in \mathbb{H},
\end{equation}
is well defined, self-adjoint, nonnegative definite and trace-class. Let $(\lambda_j,e_j)_{j=1}^{\infty}$ be an eigen-decomposition of $\Sigma_b$, i.e., $\{e_j\}_{j=1}^{\infty}$ is an orthonormal basis of $\mathbb{H}$ and $\Sigma_b \, e_j = \lambda_j e_j$ with $\lambda_j \geq 0$ and $\sum_{j=1}^{\infty} \lambda_j < \infty$. Let $T(n_1,n_2)$ be the test statistic in \eqref{eq:Donald.2} with $b_1 = b_2 = b$. Then, as $n \to \infty$,
\begin{equation}\label{eq:fixed.bandwidth.limit}
T(n_1,n_2) \rightsquigarrow \sum_{j=1}^{\infty} \lambda_j Z_j^2,
\end{equation}
where $Z_1,Z_2,\ldots \stackrel{\mathrm{iid}}{\sim} \mathcal{N}(0,1)$. The series in \eqref{eq:fixed.bandwidth.limit} converges almost surely and in $L^1$. Equivalently, if $\mathcal{G}$ is a centered Gaussian element in $\mathbb{H}$ with covariance operator $\Sigma_b$, then
\[
T(n_1,n_2) \rightsquigarrow \|\mathcal{G}\|_{\mathbb{H}}^2.
\]
\end{theorem}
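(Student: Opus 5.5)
We prove Theorem~\ref{thm:null.fixed.bandwidth} by treating $T(n_1,n_2)$ as the squared norm of a normalized difference of two independent empirical means in $\mathbb{H}$, and then applying the Hilbert-space central limit theorem together with the continuous mapping theorem.

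\textbf{Step 1: $\phi_b(\mathfrak{X})$ is square-integrable.} Measurability of $X\mapsto\phi_b(X)\in\mathbb{H}$ follows from joint measurability of $(X,S)\mapsto K_{\nu(b,d),bS}(X)$ and separability of $\mathbb{H}$ (Pettis). For the norm, the overlap formula of Proposition~\ref{prop:test.statistic}, taken on the diagonal with $X_1=X_2=X$ and $b_1=b_2=b$, gives
\[
\|\phi_b(X)\|_{\mathbb{H}}^2=\int K_{\nu,bS}(X)^2\,\rd S = C_{b,d}\,|X|^{1/b}\,|2b^{-1}X|^{-1/b-(d+1)/2}=C'_{b,d}\,|X|^{-(d+1)/2},
\]
where $C_{b,d}$ and $C'_{b,d}$ depend only on $b$ and $d$. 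Hence
\[
\EE\|\phi_b(\mathfrak{X})\|_{\mathbb{H}}^2 = C'_{b,d}\,V_{1,1}<\infty
\]
by \eqref{eq:fixed.b.moment}. Bochner integrability then gives $\mu_b\in\mathbb{H}$. The covariance operator $\Sigma_b$ is self-adjoint and nonnegative by construction, and it is trace-class because
\[
\tr\Sigma_b=\EE\|\phi_b(\mathfrak{X})-\mu_b\|_{\mathbb{H}}^2\le \EE\|\phi_b(\mathfrak{X})\|_{\mathbb{H}}^2<\infty .
\]
The spectral theorem for compact self-adjoint operators then supplies the eigen-decomposition $(\lambda_j,e_j)$, completing $\{e_j\}$ by kernel vectors if necessary.

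\textbf{Step 2: reduction to a normalized difference of means.} With $b_1=b_2=b$ we have $\hat f^{(\ell)}=n_\ell^{-1}\sum_i\phi_b(\mathfrak{X}^{(\ell)}_i)$. Under $H_0$ the two samples share the mean $\mu_b$, so the difference of KDEs equals the difference of the centered empirical means. Therefore
\[
T=\Big\|\sqrt{n_2/n}\,W_1-\sqrt{n_1/n}\,W_2\Big\|_{\mathbb{H}}^2,\qquad W_\ell=n_\ell^{-1/2}\sum_{i=1}^{n_\ell}\{\phi_b(\mathfrak{X}^{(\ell)}_i)-\mu_b\}.
\]

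\textbf{Step 3: Hilbert-space CLT.} Since the summands are iid, centered and square-integrable in the separable Hilbert space $\mathbb{H}$, the Hilbert-space CLT gives $W_\ell\rightsquigarrow\mathcal{G}_\ell$, where $\mathcal{G}_\ell$ is centered Gaussian with covariance $\Sigma_b$. The two samples are independent, so $(W_1,W_2)\rightsquigarrow(\mathcal{G}_1,\mathcal{G}_2)$ with $\mathcal{G}_1,\mathcal{G}_2$ independent. By \eqref{eq:n1.n2.n}, $n_1/n\to\varrho$ and $n_2/n\to1-\varrho$. Slutsky's lemma then yields
\[
\sqrt{n_2/n}\,W_1-\sqrt{n_1/n}\,W_2\rightsquigarrow\sqrt{1-\varrho}\,\mathcal{G}_1-\sqrt{\varrho}\,\mathcal{G}_2 .
\]
The limit is centered Gaussian with covariance $(1-\varrho)\Sigma_b+\varrho\Sigma_b=\Sigma_b$, so it has the law of $\mathcal{G}$.

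\textbf{Step 4: continuous mapping and the series representation.} The map $h\mapsto\|h\|_{\mathbb{H}}^2$ is continuous, so $T\rightsquigarrow\|\mathcal{G}\|_{\mathbb{H}}^2$. Expanding in the basis, $\|\mathcal{G}\|^2=\sum_j\langle\mathcal{G},e_j\rangle^2$. The coordinates $\langle\mathcal{G},e_j\rangle$ are jointly Gaussian with covariance $\langle\Sigma_be_j,e_k\rangle=\lambda_j\delta_{jk}$, so they are independent and equal in law to $\sqrt{\lambda_j}Z_j$. The partial sums are nonnegative and nondecreasing, which gives almost sure convergence. Since $\EE\sum_j\lambda_jZ_j^2=\sum_j\lambda_j<\infty$, monotone convergence gives convergence in $L^1$.

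\textbf{Main obstacle.} The only genuinely problem-specific step is the exact computation of $\|\phi_b(X)\|_{\mathbb{H}}^2$ in Step~1. It requires evaluating the kernel self-overlap, which we take from Proposition~\ref{prop:test.statistic} with equal arguments, and tracking how the powers of $|X|$ cancel to leave exactly $|X|^{-(d+1)/2}$. That exponent is what ties square-integrability to the single moment condition \eqref{eq:fixed.b.moment}. Everything else is the standard Hilbert-space CLT argument.
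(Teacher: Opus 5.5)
Your proposal is correct and follows essentially the same route as the paper. Both arguments rest on four ingredients: the diagonal identity $\|\phi_b(X)\|_{\mathbb{H}}^2=\kappa_b(X,X)=A_d(b)\,|X|^{-(d+1)/2}$ (Lemma~\ref{lem:kappa.diagonal.p.moment}, obtained from Proposition~\ref{prop:test.statistic} exactly as you do), the rewriting of $T(n_1,n_2)$ as a squared $\mathbb{H}$-norm of a normalized difference of means, the separable Hilbert-space CLT with independence and Slutsky, and continuous mapping followed by the Parseval expansion. The only small difference is measurability: you use Pettis' theorem, whereas the paper shows $X\mapsto\phi_b(X)$ is continuous into $\mathbb{H}$ via $\|\phi_b(X)-\phi_b(Y)\|_{\mathbb{H}}^2=\kappa_b(X,X)+\kappa_b(Y,Y)-2\kappa_b(X,Y)$, and both arguments are valid.
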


\providecommand{\SimulationReplications}{20000}
\providecommand{\SimulationPermutations}{2499}
\providecommand{\SimulationConfigurations}{20}
\providecommand{\SimulationSampleDesigns}{$(20,20)$, $(50,50)$}
\providecommand{\SimulationDimension}{2}
\providecommand{\SimulationCovarianceVectors}{8}
\providecommand{\SimulationBoldCaptionSentence}{For each alternative configuration $\mathrm{A}_i$, $i = 1,2,\ldots,17$, the highest displayed power within each sample-size pair is shown in bold, with all ties at the displayed precision also shown in bold; null entries remain unbold because they estimate the Type I error rate rather than power.}

\providecommand{\SimulationNullMinimum}{4.7}
\providecommand{\SimulationNullMaximum}{5.3}
\providecommand{\SimulationMaximumBandwidthRateDifference}{5.2}
\providecommand{\SimulationBandwidthFits}{1600000}
\providecommand{\SimulationLSCVMinimum}{0.0000}
\providecommand{\SimulationLSCVMaximum}{7351.176}
\providecommand{\SimulationLCVMinimum}{0.0097}
\providecommand{\SimulationLCVMaximum}{79.691}
\providecommand{\SimulationLSCVMedianMinimum}{0.011}
\providecommand{\SimulationLSCVMedianMaximum}{0.267}
\providecommand{\SimulationLCVMedianMinimum}{0.027}
\providecommand{\SimulationLCVMedianMaximum}{0.191}
\providecommand{\SimulationNOneDesignALSCV}{5.3}
\providecommand{\SimulationNOneDesignALCV}{5.1}
\providecommand{\SimulationNOneDesignALukic}{5.0}
\providecommand{\SimulationNOneDesignBLSCV}{4.9}
\providecommand{\SimulationNOneDesignBLCV}{4.9}
\providecommand{\SimulationNOneDesignBLukic}{4.7}
\providecommand{\SimulationNTwoDesignALSCV}{5.1}
\providecommand{\SimulationNTwoDesignALCV}{5.1}
\providecommand{\SimulationNTwoDesignALukic}{5.2}
\providecommand{\SimulationNTwoDesignBLSCV}{5.2}
\providecommand{\SimulationNTwoDesignBLCV}{5.2}
\providecommand{\SimulationNTwoDesignBLukic}{5.0}
\providecommand{\SimulationNThreeDesignALSCV}{5.1}
\providecommand{\SimulationNThreeDesignALCV}{5.1}
\providecommand{\SimulationNThreeDesignALukic}{5.2}
\providecommand{\SimulationNThreeDesignBLSCV}{5.1}
\providecommand{\SimulationNThreeDesignBLCV}{5.1}
\providecommand{\SimulationNThreeDesignBLukic}{5.0}
\providecommand{\SimulationAOneDesignALSCV}{22.5}
\providecommand{\SimulationAOneDesignALCV}{20.9}
\providecommand{\SimulationAOneDesignALukic}{38.0}
\providecommand{\SimulationAOneDesignBLSCV}{43.5}
\providecommand{\SimulationAOneDesignBLCV}{40.8}
\providecommand{\SimulationAOneDesignBLukic}{77.0}
\providecommand{\SimulationATwoDesignALSCV}{28.9}
\providecommand{\SimulationATwoDesignALCV}{28.6}
\providecommand{\SimulationATwoDesignALukic}{5.9}
\providecommand{\SimulationATwoDesignBLSCV}{72.5}
\providecommand{\SimulationATwoDesignBLCV}{72.9}
\providecommand{\SimulationATwoDesignBLukic}{9.8}
\providecommand{\SimulationAThreeDesignALSCV}{26.4}
\providecommand{\SimulationAThreeDesignALCV}{25.8}
\providecommand{\SimulationAThreeDesignALukic}{19.4}
\providecommand{\SimulationAThreeDesignBLSCV}{63.0}
\providecommand{\SimulationAThreeDesignBLCV}{63.0}
\providecommand{\SimulationAThreeDesignBLukic}{47.8}
\providecommand{\SimulationAFourDesignALSCV}{54.1}
\providecommand{\SimulationAFourDesignALCV}{54.7}
\providecommand{\SimulationAFourDesignALukic}{94.3}
\providecommand{\SimulationAFourDesignBLSCV}{88.5}
\providecommand{\SimulationAFourDesignBLCV}{88.2}
\providecommand{\SimulationAFourDesignBLukic}{100.0}
\providecommand{\SimulationAFiveDesignALSCV}{37.0}
\providecommand{\SimulationAFiveDesignALCV}{35.5}
\providecommand{\SimulationAFiveDesignALukic}{39.2}
\providecommand{\SimulationAFiveDesignBLSCV}{67.3}
\providecommand{\SimulationAFiveDesignBLCV}{65.7}
\providecommand{\SimulationAFiveDesignBLukic}{87.6}
\providecommand{\SimulationASixDesignALSCV}{29.1}
\providecommand{\SimulationASixDesignALCV}{27.9}
\providecommand{\SimulationASixDesignALukic}{35.9}
\providecommand{\SimulationASixDesignBLSCV}{58.4}
\providecommand{\SimulationASixDesignBLCV}{58.1}
\providecommand{\SimulationASixDesignBLukic}{74.1}
\providecommand{\SimulationASevenDesignALSCV}{8.7}
\providecommand{\SimulationASevenDesignALCV}{8.4}
\providecommand{\SimulationASevenDesignALukic}{9.4}
\providecommand{\SimulationASevenDesignBLSCV}{14.0}
\providecommand{\SimulationASevenDesignBLCV}{13.6}
\providecommand{\SimulationASevenDesignBLukic}{16.9}
\providecommand{\SimulationAEightDesignALSCV}{11.8}
\providecommand{\SimulationAEightDesignALCV}{11.7}
\providecommand{\SimulationAEightDesignALukic}{8.9}
\providecommand{\SimulationAEightDesignBLSCV}{25.7}
\providecommand{\SimulationAEightDesignBLCV}{25.3}
\providecommand{\SimulationAEightDesignBLukic}{16.7}
\providecommand{\SimulationANineDesignALSCV}{46.9}
\providecommand{\SimulationANineDesignALCV}{47.4}
\providecommand{\SimulationANineDesignALukic}{40.0}
\providecommand{\SimulationANineDesignBLSCV}{84.9}
\providecommand{\SimulationANineDesignBLCV}{85.0}
\providecommand{\SimulationANineDesignBLukic}{92.1}
\providecommand{\SimulationATenDesignALSCV}{17.6}
\providecommand{\SimulationATenDesignALCV}{18.0}
\providecommand{\SimulationATenDesignALukic}{15.6}
\providecommand{\SimulationATenDesignBLSCV}{43.1}
\providecommand{\SimulationATenDesignBLCV}{46.4}
\providecommand{\SimulationATenDesignBLukic}{34.3}
\providecommand{\SimulationAElevenDesignALSCV}{53.5}
\providecommand{\SimulationAElevenDesignALCV}{51.7}
\providecommand{\SimulationAElevenDesignALukic}{21.3}
\providecommand{\SimulationAElevenDesignBLSCV}{95.5}
\providecommand{\SimulationAElevenDesignBLCV}{95.1}
\providecommand{\SimulationAElevenDesignBLukic}{51.6}
\providecommand{\SimulationATwelveDesignALSCV}{9.9}
\providecommand{\SimulationATwelveDesignALCV}{10.8}
\providecommand{\SimulationATwelveDesignALukic}{5.1}
\providecommand{\SimulationATwelveDesignBLSCV}{18.9}
\providecommand{\SimulationATwelveDesignBLCV}{19.9}
\providecommand{\SimulationATwelveDesignBLukic}{5.1}
\providecommand{\SimulationAThirteenDesignALSCV}{41.0}
\providecommand{\SimulationAThirteenDesignALCV}{38.9}
\providecommand{\SimulationAThirteenDesignALukic}{13.6}
\providecommand{\SimulationAThirteenDesignBLSCV}{87.0}
\providecommand{\SimulationAThirteenDesignBLCV}{86.2}
\providecommand{\SimulationAThirteenDesignBLukic}{31.1}
\providecommand{\SimulationAFourteenDesignALSCV}{6.1}
\providecommand{\SimulationAFourteenDesignALCV}{6.3}
\providecommand{\SimulationAFourteenDesignALukic}{5.6}
\providecommand{\SimulationAFourteenDesignBLSCV}{7.6}
\providecommand{\SimulationAFourteenDesignBLCV}{7.7}
\providecommand{\SimulationAFourteenDesignBLukic}{6.7}
\providecommand{\SimulationAFifteenDesignALSCV}{17.1}
\providecommand{\SimulationAFifteenDesignALCV}{17.8}
\providecommand{\SimulationAFifteenDesignALukic}{5.9}
\providecommand{\SimulationAFifteenDesignBLSCV}{42.1}
\providecommand{\SimulationAFifteenDesignBLCV}{47.3}
\providecommand{\SimulationAFifteenDesignBLukic}{8.3}
\providecommand{\SimulationASixteenDesignALSCV}{5.5}
\providecommand{\SimulationASixteenDesignALCV}{5.5}
\providecommand{\SimulationASixteenDesignALukic}{5.7}
\providecommand{\SimulationASixteenDesignBLSCV}{6.6}
\providecommand{\SimulationASixteenDesignBLCV}{6.4}
\providecommand{\SimulationASixteenDesignBLukic}{6.5}
\providecommand{\SimulationASeventeenDesignALSCV}{64.8}
\providecommand{\SimulationASeventeenDesignALCV}{65.5}
\providecommand{\SimulationASeventeenDesignALukic}{57.1}
\providecommand{\SimulationASeventeenDesignBLSCV}{95.4}
\providecommand{\SimulationASeventeenDesignBLCV}{95.3}
\providecommand{\SimulationASeventeenDesignBLukic}{99.2}

\section{Simulation study}\label{sec:power.study}

This section presents a simulation study in dimension $d = 2$ comparing the empirical Type I error rates and powers of the proposed Wishart-KDE test, implemented with pooled LSCV or LCV bandwidth selection, with those of the empirical Laplace-transform test of \citet{Lukic2024Laplace}.

Subsection~\ref{subsec:simulation.competing.test} describes the competing test and the common permutation implementation, Subsection~\ref{subsec:simulation.target.laws} introduces the target laws and configurations, Subsection~\ref{subsec:simulation.design} specifies the Monte Carlo design, and Subsection~\ref{subsec:simulation.numerical.implementation} gives the numerical details. The empirical rejection percentages and selected bandwidths are reported in Subsection~\ref{subsec:simulation.results}, and the main power results are discussed in Subsection~\ref{subsec:simulation.discussion}.

\subsection{Competing test}\label{subsec:simulation.competing.test}

Let $\mathcal{S}_{+}^2$ denote the cone of $2\times 2$ symmetric nonnegative definite matrices. For $U \in \mathcal{S}_{+}^2$, the matrix Laplace transform of a random matrix $\mathfrak{X}$ is $\mathcal{L}_{\mathfrak{X}}(U) = \EE\{\etr(-U\mathfrak{X})\}$. This transform determines the distribution of $\mathfrak{X}$, so the empirical Laplace-transform test of \citet{Lukic2024Laplace} compares the two samples through empirical versions of their transforms. Specifically, let $\smash{\mathfrak{X}^{(1)}_1,\ldots,\mathfrak{X}^{(1)}_{n_1}\stackrel{\mathrm{iid}}{\sim} f^{(1)}}$ and $\smash{\mathfrak{X}^{(2)}_1,\ldots,\mathfrak{X}^{(2)}_{n_2}\stackrel{\mathrm{iid}}{\sim} f^{(2)}}$ be two independent random samples from $f^{(1)}$ and $f^{(2)}$, respectively. Their empirical matrix Laplace transforms are
\[
\widehat{\mathcal{L}}_1(U) = \frac{1}{n_1}\sum_{i=1}^{n_1}\etr(-U\mathfrak{X}_i^{(1)}), \qquad \widehat{\mathcal{L}}_2(U) = \frac{1}{n_2}\sum_{j=1}^{n_2}\etr(-U\mathfrak{X}_j^{(2)}).
\]

For Luki\'c's test, let $\mu_{\nu,\Sigma,\Omega}$ denote a noncentral Wishart probability measure, and let $\mathrm{Id}_2$ denote the $2 \times 2$ identity matrix. The weighting measure is characterized here by the Laplace transform
\[
\ell_{\nu,\Sigma,\Omega}(A) \leqdef \int_{\mathcal{S}_{+}^2}\etr(-UA) \, \mu_{\nu,\Sigma,\Omega}(\rd U) = \frac{\etr\big[-2A\{\mathrm{Id}_2 + 2\Sigma A\}^{-1}\Omega\big]}{|\mathrm{Id}_2 + 2\Sigma A|^{\nu/2}}, \qquad A \in \mathcal{S}_{+}^2.
\]
Here $\nu$ is the degrees-of-freedom parameter, $\Sigma$ is the scale matrix, and $\Omega$ is the noncentrality matrix of the weighting measure. (Equivalently, if $\tilde{\nu}$ denotes the parameter in the representation $\mathrm{NCW}(2\tilde{\nu},\Sigma,\Omega)$ used in Eq.~(3) of \citet{Lukic2024Laplace}, then $\tilde{\nu} = \nu/2$.) These are tuning parameters of the discrepancy and are not parameters of either target distribution. Luki\'c's discrepancy is the integrated squared difference
\begin{equation}\label{eq:Lukic.test}
D_{\nu,\Sigma,\Omega}(n_1,n_2) \leqdef \int_{\mathcal{S}_{+}^2}\{\widehat{\mathcal{L}}_1(U) - \widehat{\mathcal{L}}_2(U)\}^2 \, \mu_{\nu,\Sigma,\Omega}(\rd U).
\end{equation}
This empirical Laplace-transform procedure is referred to as Luki\'c's test below; large values indicate disagreement between the empirical transforms and hence evidence against $H_0 : f^{(1)} = f^{(2)}$.

Because $\etr(-UX)\etr(-UY) = \etr[-U(X + Y)]$, expanding the square produces the pairwise kernel $q_{\nu,\Sigma,\Omega}(X,Y) = \ell_{\nu,\Sigma,\Omega}(X + Y)$. The power study of \citet{Lukic2024Laplace} fixes $\Sigma = \mathrm{Id}_d$ and reports $\nu \in \{1,2,5\}$ and $\Omega \in \{\mathrm{Id}_d,2\mathrm{Id}_d\}$, where $\mathrm{Id}_d$ is the $d \times d$ identity matrix. Accordingly, the present comparison uses $(\nu,\Sigma,\Omega) = (1,\mathrm{Id}_2,\mathrm{Id}_2)$, the combination favored among the settings examined there; see \citet[p.~9347]{Lukic2024Laplace}. For $d = 2$, this full-rank choice of $\Omega$ is admissible because $\nu = d - 1$ lies at the boundary of the noncentral Wishart existence range that permits unrestricted noncentrality. The resulting pairwise kernel is
\[
q(X,Y) = \frac{\etr\big[-2(X + Y)\{\mathrm{Id}_2 + 2(X + Y)\}^{-1}\mathrm{Id}_2\big]}{|\mathrm{Id}_2 + 2(X + Y)|^{1/2}}, \qquad X,Y \in \mathcal{S}_{++}^2.
\]
Setting $n = n_1 + n_2$, the implementation rescales Luki\'c's discrepancy as follows:
\[
\frac{n_1n_2}{n}D_{1,\mathrm{Id}_2,\mathrm{Id}_2}(n_1,n_2) = \frac{n_1n_2}{n}\left\{\frac{1}{n_1^2}\sum_{i,j=1}^{n_1}q(\mathfrak{X}_i^{(1)},\mathfrak{X}_j^{(1)}) + \frac{1}{n_2^2}\sum_{i,j=1}^{n_2}q(\mathfrak{X}_i^{(2)},\mathfrak{X}_j^{(2)}) - \frac{2}{n_1n_2}\sum_{i=1}^{n_1}\sum_{j=1}^{n_2}q(\mathfrak{X}_i^{(1)},\mathfrak{X}_j^{(2)})\right\}.
\]
The prefactor $n_1n_2/n$ is positive and constant over label permutations, so it does not affect the permutation $p$-value.

For either test, let $G$ be the pooled-sample Gram matrix for the Wishart-overlap kernel or $q$. If $\bb{w}$ has entries $1/n_1$ for the first sample and $-1/n_2$ for the second, the observed statistic is proportional to
\[
\frac{n_1n_2}{n}\bb{w}^{\top}G\bb{w}.
\]
Each random relabeling changes only $\bb{w}$, so the three Gram matrices (for the LSCV Wishart-overlap kernel, the LCV Wishart-overlap kernel, and the pairwise kernel $q$) are computed once per replication and the same allocation vectors are used for all three implementations. Because both test statistics are mathematically defined as $V$-statistics rather than $U$-statistics, the diagonal terms of the Gram matrices (representing the self-evaluation of each observation) are retained in the final summations. The parameters $(\nu,\Sigma,\Omega)$ of $q$ remain fixed in every configuration, while the two Wishart bandwidth rules are applied in the same way to the pooled data in every configuration. No tuning value is chosen after inspecting the rejection results for a configuration.

\subsection{Target laws and configurations}\label{subsec:simulation.target.laws}

The design combines Wishart and inverse Wishart laws with sample covariance laws, finite mixtures, randomly rotated Wishart laws, and orthogonally invariant spectral laws. The sample covariance laws are generated from Gaussian, Student $t$, uniform, and fixed-radius parent distributions, and the alternatives are chosen to isolate differences in scale, concentration, orientation, boundary proximity, tail weight, innovation geometry, mixture structure, and spectral dependence. Write $\mathrm{W}_\beta(\Sigma) \equiv \mathrm{Wishart}_2(\beta,\Sigma/\beta)$ for a law with mean $\Sigma$, and write $\mathrm{IW}_\beta \equiv \mathrm{InvWishart}_2\{\beta,(\beta - 3)\mathrm{Id}_2\}$ under the standard inverse Wishart parameterization for a law with mean $\mathrm{Id}_2$. The matrices used to specify the geometric alternatives are
\[
D = \mathrm{diag}(0.5, 1.5), \qquad Q = \frac{1}{\sqrt{2}}\begin{pmatrix}1 & -1 \\ 1 & 1\end{pmatrix}, \qquad R_{\rho} = \begin{pmatrix}1 & \rho \\ \rho & 1\end{pmatrix}.
\]
To define covariance-matrix targets, let $\bb{U}_{a,1},\ldots,\bb{U}_{a,8}$ be iid vectors of the form $\smash{\bb{U}_{a,j} = \{(a - 2)/V_j\}^{1/2}\bb{Z}_j}$, where $\bb{Z}_j \sim \mathcal{N}_2(\bb{0}_2,\mathrm{Id}_2)$, $V_j \sim \chi_a^2$, and all variables are independent, and let $\mathfrak{C}_a$ denote their unbiased sample covariance matrix.

Also let $\mathfrak{C}_{G}$, $\mathfrak{C}_{U}$, and $\mathfrak{C}_{S}$ be the unbiased sample covariance matrices of eight iid standard Gaussian vectors, eight iid vectors with independent $\mathrm{Unif}(-\sqrt{3},\sqrt{3})$ coordinates, and eight iid vectors $\bb{S}_j = \sqrt{2}(\cos\Theta_j,\sin\Theta_j)^{\top}$ with $\Theta_j \sim \mathrm{Unif}(0,2\pi)$, respectively. All three parent distributions have covariance $\mathrm{Id}_2$, and $\mathfrak{C}_{G} \sim \mathrm{W}_7(\mathrm{Id}_2)$. Since $8 > d = 2$, the covariance matrices are positive definite almost surely.

Set $D_{-} = \mathrm{diag}(0.25, 1.75)$ and $D_{+} = \mathrm{diag}(1.75, 0.25)$, and let $\mathfrak{M}$ and $\mathfrak{H}$ follow the mixture laws
\[
\mathfrak{M} \sim \frac{1}{2}\mathrm{W}_5(D_{-}) + \frac{1}{2}\mathrm{W}_5(D_{+}), \qquad \mathfrak{H} \sim \frac{5}{9}\mathrm{W}_3(\mathrm{Id}_2) + \frac{4}{9}\mathrm{W}_{30}(\mathrm{Id}_2),
\]
respectively. For $\eta \in [0,1)$, let $(\zeta_{1,\eta},\zeta_{2,\eta})$ be a bivariate standard normal vector with correlation $\eta$, let $L_{j,\eta} = \exp(\sigma \zeta_{j,\eta} - \sigma^2/2)$, $j \in \{1,2\}$, where $\sigma = \{\log(1.4)\}^{1/2}$, and let $Q_{\theta}$ denote rotation through the angle $\theta$, so $Q = Q_{\pi/4}$. If $\Theta \sim \mathrm{Unif}(0,2\pi)$ is independent of the two Gaussian variables, the law of $\mathfrak{L}_{\eta} \equiv Q_{\Theta}\mathrm{diag}(L_{1,\eta},L_{2,\eta})Q_{\Theta}^{\top}$ is orthogonally invariant, is supported on $\mathcal{S}_{++}^2$, and has expectation $\mathrm{Id}_2$.

To define two laws with the same eigenvalue distribution but different orientation distributions, let $\mathfrak{X}_D,\mathfrak{X}_U \smash{\stackrel{\mathrm{iid}}{\sim}} \mathrm{W}_5(D)$, let $\Theta_D$ be equally likely to be $0$ or $\pi/2$, and let $\Theta_U \sim \mathrm{Unif}(0,2\pi)$, with all these variables independent. Define $\mathfrak{O}_D \equiv Q_{\Theta_D}\mathfrak{X}_D Q_{\Theta_D}^{\top}$ and $\mathfrak{O}_U \equiv Q_{\Theta_U}\mathfrak{X}_U Q_{\Theta_U}^{\top}$. The laws of both $\mathfrak{O}_D$ and $\mathfrak{O}_U$ have expectation $\mathrm{Id}_2$. Finally, let $\mathfrak{P}$ and $\mathfrak{R}$ follow the mean-matched mixture laws
\[
\mathfrak{P} \sim \frac{9}{10}\mathrm{W}_5(0.9\mathrm{Id}_2) + \frac{1}{10}\mathrm{W}_5(1.9\mathrm{Id}_2), \qquad \mathfrak{R} \sim \frac{1}{2}\mathrm{W}_5(R_{0.8}) + \frac{1}{2}\mathrm{W}_5(R_{-0.8}),
\]
respectively. The three null configurations and seventeen alternatives are
\[
\begin{array}{lll}
\mathrm{N}_1: \mathrm{W}_5(\mathrm{Id}_2) \text{ versus } \mathrm{W}_5(\mathrm{Id}_2), & \qquad & \mathrm{N}_2: \mathrm{IW}_8 \text{ versus } \mathrm{IW}_8, \\
\mathrm{N}_3: \mathfrak{C}_3 \text{ versus } \mathfrak{C}_3, && \mathrm{A}_1: \mathrm{W}_5(\mathrm{Id}_2) \text{ versus } \mathrm{W}_5(1.35\mathrm{Id}_2), \\
\mathrm{A}_2: \mathrm{W}_5(\mathrm{Id}_2) \text{ versus } \mathrm{IW}_8, && \mathrm{A}_3: \mathrm{W}_5(\mathrm{Id}_2) \text{ versus } \mathrm{W}_{10}(\mathrm{Id}_2), \\
\mathrm{A}_4: \mathrm{W}_5(D) \text{ versus } \mathrm{W}_5(QDQ^{\top}), && \mathrm{A}_5: \mathrm{W}_5(R_{0.80}) \text{ versus } \mathrm{W}_5(R_{0.90}), \\
\mathrm{A}_6: \mathfrak{C}_3 \text{ versus } \mathfrak{C}_5, && \mathrm{A}_7: \mathfrak{C}_5 \text{ versus } \mathfrak{C}_{10}, \\
\mathrm{A}_8: \mathfrak{C}_{G} \text{ versus } \mathfrak{C}_{U}, && \mathrm{A}_9: \mathrm{W}_5(\mathrm{Id}_2) \text{ versus } \mathfrak{M}, \\
\mathrm{A}_{10}: \mathrm{IW}_8 \text{ versus } \mathrm{IW}_{12}, && \mathrm{A}_{11}: \mathfrak{C}_{G} \text{ versus } \mathfrak{C}_{S}, \\
\mathrm{A}_{12}: \mathrm{W}_5(\mathrm{Id}_2) \text{ versus } \mathfrak{H}, && \mathrm{A}_{13}: \mathrm{W}_5(\mathrm{Id}_2) \text{ versus } \mathfrak{L}_0, \\
\mathrm{A}_{14}: \mathfrak{O}_D \text{ versus } \mathfrak{O}_U, && \mathrm{A}_{15}: \mathfrak{L}_0 \text{ versus } \mathfrak{L}_{0.8}, \\
\mathrm{A}_{16}: \mathrm{W}_5(\mathrm{Id}_2) \text{ versus } \mathfrak{P}, && \mathrm{A}_{17}: \mathrm{W}_5(\mathrm{Id}_2) \text{ versus } \mathfrak{R}.
\end{array}
\]
Several comparisons remove simple first-moment explanations. Mean matching holds in $\mathrm{A}_2$, $\mathrm{A}_3$, and $\mathrm{A}_6$ through $\mathrm{A}_{17}$, although the matching in $\mathrm{A}_2$ does not extend to second moments. Configuration $\mathrm{A}_4$ preserves the scale eigenvalues and applies a fixed rotation. In $\mathrm{A}_6$, the $t_3$ parent has no fourth moment and supplies a heavy-tailed case, whereas both parents in $\mathrm{A}_7$ have finite fourth moments. Configurations $\mathrm{A}_8$ and $\mathrm{A}_{11}$ compare covariance laws generated from parents having the same covariance but different distributions. Configuration $\mathrm{A}_{10}$ isolates concentration within the inverse Wishart family, while $\mathrm{A}_{13}$ compares the baseline Wishart law with an orthogonally invariant spectral law. Configuration $\mathrm{A}_{14}$ keeps the eigenvalue distribution fixed and changes the distribution of orientations. Configuration $\mathrm{A}_{15}$ preserves the two lognormal marginal distributions and changes their dependence, $\mathrm{A}_{16}$ introduces sparse scale contamination, and $\mathrm{A}_{17}$ mixes equal and opposite correlation structures.

Configuration $\mathrm{A}_{12}$ is more stringently matched. Under $\mathrm{W}_\beta(\mathrm{Id}_2)$, the entrywise covariance tensor is $\Cov(X_{ij},X_{k\ell}) = \beta^{-1}(\delta_{ik}\delta_{j\ell} + \delta_{i\ell}\delta_{jk})$. Since $(5/9)/3 + (4/9)/30 = 1/5$, the two laws in $\mathrm{A}_{12}$ have the same matrix expectation and the same complete entrywise covariance tensor, so their difference begins at higher order. The three nulls mirror the diversity of the alternatives by covering a regular Wishart law ($\mathrm{N}_1$), a skewed inverse Wishart law ($\mathrm{N}_2$), and a heavy-tailed covariance law generated by a $t_3$ parent without a fourth moment ($\mathrm{N}_3$).

\subsection{Simulation design}\label{subsec:simulation.design}

Each of the \SimulationConfigurations{} configurations is generated independently \SimulationReplications{} times for the sample-size pairs \SimulationSampleDesigns. Every procedure is tested at level $\alpha = 0.05$ using the same $B = \SimulationPermutations$ random label permutations and the add-one $p$-value in \eqref{eq:add.one.p.value} of Remark~\ref{rem:permutation.calibration}. Sharing the labelings reduces extraneous Monte Carlo variation in the comparison. Under $H_0$, exchangeability makes this calibration finite-sample valid for both discrepancies.

\subsection{Numerical implementation}\label{subsec:simulation.numerical.implementation}

Within each replication, the Wishart bandwidths are selected anew from the pooled observations using the leave-one-out LCV rule and the LSCV rule of \citet{BelzileGenestOuimetRichards2025WishartKDE}, setting the time-series leave-out window parameter to $h = 1$ to match our iid setting. The pooled sample is iid under $H_0$ but not generally under an alternative; in the latter case, the two criteria are used as practical data-driven rules. Each selected bandwidth is held fixed across all label permutations. This is natural because relabeling leaves the pooled sample unchanged; running the pooled bandwidth optimization again during each permutation would be redundant, while estimating separate groupwise bandwidths for each permutation would not only alter the theoretical test statistic being studied, but also significantly increase the computational cost.

The proposed Wishart-KDE test statistic is evaluated from the closed-form overlap in Proposition~\ref{prop:test.statistic}. Determinants are computed by Cholesky decomposition on the log scale, and the overlap Gram matrix is rescaled by one positive factor common to the observed and permuted statistics, which leaves the permutation $p$-value unchanged. All \SimulationBandwidthFits{} bandwidth optimizations (\SimulationConfigurations{} configurations $\times$ \SimulationSampleDesigns{} sample-size pairs $\times$ \SimulationReplications{} replications $\times$ 2 bandwidth selectors) were completed successfully, and no numerical failure occurred. The $\mathsf{R}$ code flags bandwidths below $10^{-7}$ or above $10$ for diagnostic purposes, and any flagged finite value is retained rather than capped.

\subsection{Results}\label{subsec:simulation.results}

Table~\ref{tab:power.study} reports the empirical rejection percentages. Across the three null distributions, the empirical Type I error rates range from \SimulationNullMinimum\% to \SimulationNullMaximum\%. They remain close to the nominal $5\%$ level, including under the skewed inverse Wishart and heavy-tailed $t_3$ covariance nulls. With \SimulationReplications{} replications, the largest Monte Carlo standard error is about $0.4$ percentage points, and the standard error near the nominal level is about $0.2$ percentage points.

Across the whole table, the two bandwidth implementations differ by at most \SimulationMaximumBandwidthRateDifference{} percentage points. In every alternative and sample-size pair except $\mathrm{A}_{16}$ at $(n_1,n_2) = (50,50)$, the LSCV and LCV rejection percentages are either both higher or both lower than Luki\'c's rejection percentage. In that cell, LSCV is slightly higher and LCV slightly lower than Luki\'c's test. The displayed power is higher at $(n_1,n_2) = (50,50)$ in every method and alternative combination except for Luki\'c's test in $\mathrm{A}_{12}$, for which it is \SimulationATwelveDesignBLukic\% in both sample-size pairs.

Table~\ref{tab:bandwidth.study} reports the median selected bandwidth in each cell. For both selectors, the median bandwidth decreases with sample size in every configuration. Across all cells, the LSCV medians range from $\SimulationLSCVMedianMinimum$ to $\SimulationLSCVMedianMaximum$ and the LCV medians range from $\SimulationLCVMedianMinimum$ to $\SimulationLCVMedianMaximum$, but neither selector is uniformly larger.

\begin{table}[!ht]
\caption{Empirical rejection percentages at the $5\%$ level based on \SimulationReplications{} Monte Carlo replications and \SimulationPermutations{} common random label permutations. Within each sample-size block, LSCV and LCV denote the two pooled Wishart-KDE bandwidth rules, and Luki\'c denotes the Laplace-transform test in \eqref{eq:Lukic.test}. \SimulationBoldCaptionSentence}
\label{tab:power.study}
\centering
\renewcommand{\arraystretch}{0.6} % Default is 1.0. Numbers < 1 compress it.
\begin{tabular}{clcccccc}
\toprule
& & \multicolumn{3}{c}{$(20,20)$} & \multicolumn{3}{c}{$(50,50)$} \\
\cmidrule(lr){3-5}\cmidrule(lr){6-8}
Configuration & Feature & LSCV & LCV & Luki\'c & LSCV & LCV & Luki\'c \\
\midrule
$\mathrm{N}_{1}$ & Wishart null & 5.3 & 5.1 & 5.0 & 4.9 & 4.9 & 4.7 \\
$\mathrm{N}_{2}$ & Inverse Wishart null & 5.1 & 5.1 & 5.2 & 5.2 & 5.2 & 5.0 \\
$\mathrm{N}_{3}$ & Heavy-tail covariance null & 5.1 & 5.1 & 5.2 & 5.1 & 5.1 & 5.0 \\
\midrule
$\mathrm{A}_{1}$ & Scale & 22.5 & 20.9 & \textbf{38.0} & 43.5 & 40.8 & \textbf{77.0} \\
$\mathrm{A}_{2}$ & Mean-matched family & \textbf{28.9} & 28.6 & 5.9 & 72.5 & \textbf{72.9} & 9.8 \\
$\mathrm{A}_{3}$ & Concentration & \textbf{26.4} & 25.8 & 19.4 & \textbf{63.0} & \textbf{63.0} & 47.8 \\
$\mathrm{A}_{4}$ & Orientation & 54.1 & 54.7 & \textbf{94.3} & 88.5 & 88.2 & \textbf{100.0} \\
$\mathrm{A}_{5}$ & Near boundary & 37.0 & 35.5 & \textbf{39.2} & 67.3 & 65.7 & \textbf{87.6} \\
$\mathrm{A}_{6}$ & Extreme tail weight & 29.1 & 27.9 & \textbf{35.9} & 58.4 & 58.1 & \textbf{74.1} \\
$\mathrm{A}_{7}$ & Finite fourth moments & 8.7 & 8.4 & \textbf{9.4} & 14.0 & 13.6 & \textbf{16.9} \\
$\mathrm{A}_{8}$ & Gaussian versus uniform & \textbf{11.8} & 11.7 & 8.9 & \textbf{25.7} & 25.3 & 16.7 \\
$\mathrm{A}_{9}$ & Latent mixture structure & 46.9 & \textbf{47.4} & 40.0 & 84.9 & 85.0 & \textbf{92.1} \\
$\mathrm{A}_{10}$ & Inverse Wishart concentration & 17.6 & \textbf{18.0} & 15.6 & 43.1 & \textbf{46.4} & 34.3 \\
$\mathrm{A}_{11}$ & Fixed-radius innovations & \textbf{53.5} & 51.7 & 21.3 & \textbf{95.5} & 95.1 & 51.6 \\
$\mathrm{A}_{12}$ & Matched concentration mixture & 9.9 & \textbf{10.8} & 5.1 & 18.9 & \textbf{19.9} & 5.1 \\
$\mathrm{A}_{13}$ & Spectral lognormal & \textbf{41.0} & 38.9 & 13.6 & \textbf{87.0} & 86.2 & 31.1 \\
$\mathrm{A}_{14}$ & Orientation distribution & 6.1 & \textbf{6.3} & 5.6 & 7.6 & \textbf{7.7} & 6.7 \\
$\mathrm{A}_{15}$ & Eigenvalue dependence & 17.1 & \textbf{17.8} & 5.9 & 42.1 & \textbf{47.3} & 8.3 \\
$\mathrm{A}_{16}$ & Sparse scale contamination & 5.5 & 5.5 & \textbf{5.7} & \textbf{6.7} & 6.4 & 6.5 \\
$\mathrm{A}_{17}$ & Correlation mixture & 64.8 & \textbf{65.5} & 57.1 & 95.4 & 95.3 & \textbf{99.2} \\
\bottomrule
\end{tabular}

\end{table}

\begin{table}[!ht]
\caption{Median pooled Wishart bandwidths across the \SimulationReplications{} Monte Carlo replications.}
\label{tab:bandwidth.study}
\centering
\renewcommand{\arraystretch}{0.6} % Default is 1.0. Numbers < 1 compress it.
\begin{tabular}{ccccc}
\toprule
& \multicolumn{2}{c}{$(20,20)$} & \multicolumn{2}{c}{$(50,50)$} \\
Configuration & LSCV & LCV & LSCV & LCV \\
\midrule
$\mathrm{N}_{1}$ & 0.177 & 0.132 & 0.128 & 0.100 \\
$\mathrm{N}_{2}$ & 0.053 & 0.066 & 0.037 & 0.052 \\
$\mathrm{N}_{3}$ & 0.163 & 0.155 & 0.115 & 0.126 \\
\midrule
$\mathrm{A}_{1}$ & 0.182 & 0.135 & 0.132 & 0.103 \\
$\mathrm{A}_{2}$ & 0.100 & 0.111 & 0.071 & 0.090 \\
$\mathrm{A}_{3}$ & 0.111 & 0.103 & 0.079 & 0.080 \\
$\mathrm{A}_{4}$ & 0.201 & 0.150 & 0.146 & 0.113 \\
$\mathrm{A}_{5}$ & 0.188 & 0.140 & 0.137 & 0.107 \\
$\mathrm{A}_{6}$ & 0.164 & 0.139 & 0.117 & 0.111 \\
$\mathrm{A}_{7}$ & 0.127 & 0.108 & 0.090 & 0.084 \\
$\mathrm{A}_{8}$ & 0.078 & 0.073 & 0.055 & 0.055 \\
$\mathrm{A}_{9}$ & 0.251 & 0.172 & 0.182 & 0.128 \\
$\mathrm{A}_{10}$ & 0.046 & 0.056 & 0.033 & 0.045 \\
$\mathrm{A}_{11}$ & 0.055 & 0.066 & 0.035 & 0.049 \\
$\mathrm{A}_{12}$ & 0.255 & 0.191 & 0.202 & 0.155 \\
$\mathrm{A}_{13}$ & 0.085 & 0.102 & 0.055 & 0.081 \\
$\mathrm{A}_{14}$ & 0.225 & 0.168 & 0.163 & 0.126 \\
$\mathrm{A}_{15}$ & 0.020 & 0.036 & 0.011 & 0.027 \\
$\mathrm{A}_{16}$ & 0.179 & 0.136 & 0.130 & 0.104 \\
$\mathrm{A}_{17}$ & 0.267 & 0.178 & 0.192 & 0.131 \\
\bottomrule
\end{tabular}

\end{table}

\subsection{Discussion}\label{subsec:simulation.discussion}

No implementation dominates uniformly. At $(n_1,n_2) = (50,50)$, a Wishart-KDE implementation attains the largest displayed power in ten alternatives, $\mathrm{A}_2$, $\mathrm{A}_3$, $\mathrm{A}_8$, and $\mathrm{A}_{10}$ through $\mathrm{A}_{16}$, while Luki\'c's test leads in $\mathrm{A}_1$, $\mathrm{A}_4$ through $\mathrm{A}_7$, $\mathrm{A}_9$, and $\mathrm{A}_{17}$. At $(n_1,n_2) = (20,20)$, a Wishart-KDE implementation attains the largest displayed power in eleven alternatives, $\mathrm{A}_2$, $\mathrm{A}_3$, $\mathrm{A}_8$ through $\mathrm{A}_{15}$, and $\mathrm{A}_{17}$, while Luki\'c's test leads in $\mathrm{A}_1$, $\mathrm{A}_4$ through $\mathrm{A}_7$, and $\mathrm{A}_{16}$.

Several patterns persist across the two sample-size pairs. Luki\'c's statistic is markedly stronger for the scale and fixed-rotation alternatives $\mathrm{A}_1$ and $\mathrm{A}_4$, and it also leads in $\mathrm{A}_5$ through $\mathrm{A}_7$ for both sample-size pairs. A Wishart-KDE implementation leads in $\mathrm{A}_2$, $\mathrm{A}_3$, $\mathrm{A}_8$, and $\mathrm{A}_{10}$ through $\mathrm{A}_{15}$ for both sample-size pairs. The ordering reverses with sample size in $\mathrm{A}_9$ and $\mathrm{A}_{17}$, where a Wishart-KDE implementation leads at $(n_1,n_2) = (20,20)$ and Luki\'c's test leads at $(n_1,n_2) = (50,50)$. In $\mathrm{A}_{16}$, Luki\'c's test is largest at $(n_1,n_2) = (20,20)$, while LSCV is slightly higher and LCV slightly lower than Luki\'c's test at $(n_1,n_2) = (50,50)$.

The largest advantages for the Wishart discrepancy occur in several mean-matched or higher-order comparisons. At $(n_1,n_2) = (50,50)$, the largest Wishart-KDE power and the power of Luki\'c's test are \SimulationATwoDesignBLCV\% and \SimulationATwoDesignBLukic\% in the cross-family alternative $\mathrm{A}_2$, \SimulationAElevenDesignBLSCV\% and \SimulationAElevenDesignBLukic\% in the fixed-radius innovation alternative $\mathrm{A}_{11}$, \SimulationAThirteenDesignBLSCV\% and \SimulationAThirteenDesignBLukic\% in the spectral-lognormal alternative $\mathrm{A}_{13}$, and \SimulationAFifteenDesignBLCV\% and \SimulationAFifteenDesignBLukic\% in the eigenvalue-dependence alternative $\mathrm{A}_{15}$. In the exactly mean-and-covariance-matched mixture $\mathrm{A}_{12}$, the corresponding powers are \SimulationATwelveDesignBLCV\% and \SimulationATwelveDesignBLukic\%, providing empirical evidence that the proposed test is sensitive to differences not captured by the mean and entrywise covariance tensor, although its absolute power remains modest. All three powers remain below $8\%$ in $\mathrm{A}_{14}$ and $\mathrm{A}_{16}$ for both sample-size pairs. Conversely, Luki\'c's test reaches \SimulationAOneDesignBLukic\% in $\mathrm{A}_1$ and \SimulationAFourDesignBLukic\% in $\mathrm{A}_4$ at $(n_1,n_2) = (50,50)$, compared with Wishart-KDE maxima of \SimulationAOneDesignBLSCV\% and \SimulationAFourDesignBLSCV\%. The relative performance therefore depends materially on the distributional difference, which supports the proposed procedure as a complementary alternative rather than a uniformly more powerful replacement.

\section{Conclusion and outlook}\label{sec:outlook}

The proposed Wishart-KDE test statistic provides a kernel-based alternative to the empirical Laplace-transform test of \citet{Lukic2024Laplace} for $\mathcal{S}_{++}^d$-valued data. More broadly, it complements the Hankel-transform test of \citet{LukicMilosevic2024AISM}, though we recall that the latter tests orthogonal equivalence in distribution rather than equality of unrestricted positive definite matrix distributions. Its closed-form kernel overlap yields a two-sample $V$-statistic that can be evaluated without numerical integration, and the asymptotic null distribution is derived under both a common shrinking bandwidth and a fixed bandwidth. Because it retains information about relative orientation when $d \geq 2$, it can detect alternatives that differ through eigenvector structure and not merely through eigenvalues, which is important when principal directions carry scientific meaning. In the simulation comparison with the empirical Laplace-transform test, neither test dominates across the examined alternatives. The Wishart-KDE test has particularly large power advantages for the mean-matched cross-family ($\mathrm{A}_2$), fixed-radius innovation ($\mathrm{A}_{11}$), spectral-lognormal ($\mathrm{A}_{13}$), and eigenvalue-dependence ($\mathrm{A}_{15}$) comparisons, and it also has higher power for the exactly mean-and-covariance-matched mixture ($\mathrm{A}_{12}$), although its absolute power in that configuration remains modest. The empirical Laplace-transform test is substantially more powerful for the scale ($\mathrm{A}_1$) and fixed-rotation ($\mathrm{A}_4$) comparisons, while the ordering changes with sample size for the latent-mixture ($\mathrm{A}_9$) and correlation-mixture ($\mathrm{A}_{17}$) alternatives. More broadly, the construction extends classical $L^2$-type two-sample ideas to $\mathcal{S}_{++}^d$ while respecting positive definiteness and avoiding spill-over beyond the cone.

Several extensions deserve investigation. Testing-oriented bandwidth selectors may improve on the practical pooled cross-validation defaults, and theory for unequal bandwidths $b_1 \neq b_2$ or fully data-driven bandwidths remains to be developed. A fuller analysis under fixed alternatives would extend Remark~\ref{rem:fixed.alternatives}, while contiguous alternatives would permit local-power and efficiency comparisons between the proposed test and the empirical Laplace-transform test of \citet{Lukic2024Laplace}. Bootstrap or spectral calibration could also be compared with permutation. Extensions to $k$ samples, change-point detection, and dependent positive definite matrix observations are also natural. Finally, local diagnostics in the spirit of \citet{Duong2013} could identify where the two smoothed densities differ after a global rejection.

\begin{appendices}

\section{Proofs}\label{app:proofs}

\subsection{Proof of Proposition~\ref{prop:test.statistic}}

Recall the definition of $\delta_{i,\ell}$ and $a_{i,\ell}$ in the statement of the proposition. For $i\in \{1,2\}$, we have
\begin{equation}\label{eq:Donald.3}
\hat{f}_{n_i,b_i}^{(i)}(S) - \sum_{\ell=1}^2 \frac{n_{\ell}}{n} \hat{f}_{n_{\ell},b_{\ell}}^{(\ell)}(S)
= \sum_{\ell=1}^2 \delta_{i,\ell} \hat{f}_{n_{\ell},b_{\ell}}^{(\ell)}(S) - \sum_{\ell=1}^2 \frac{n_{\ell}}{n} \hat{f}_{n_{\ell},b_{\ell}}^{(\ell)}(S)
= \sum_{\ell=1}^2 a_{i,\ell} \hat{f}_{n_{\ell},b_{\ell}}^{(\ell)}(S).
\end{equation}
By combining \eqref{eq:Donald.1}, \eqref{eq:Donald.2} and \eqref{eq:Donald.3}, we deduce
\begin{equation}\label{eq:Donald.4}
T(n_1,n_2)
= \sum_{i=1}^2 n_i \sum_{\ell_1=1}^2 \sum_{\ell_2=1}^2 \frac{a_{i,\ell_1} a_{i,\ell_2}}{n_{\ell_1} n_{\ell_2}} \sum_{i_1=1}^{n_{\ell_1}} \sum_{i_2=1}^{n_{\ell_2}} \int_{\mathcal{S}_{++}^d} K_{\nu(b_{\ell_1},d),b_{\ell_1} S}(\mathfrak{X}_{i_1}^{(\ell_1)}) K_{\nu(b_{\ell_2},d),b_{\ell_2} S}(\mathfrak{X}_{i_2}^{(\ell_2)}) \, \rd S.
\end{equation}

Using the expression for the Wishart density in \eqref{eq:Wishart.kernel}, together with
\[
\nu(b,d) = \frac{1}{b} + d + 1, \qquad \frac{\nu(b,d)}{2} - \frac{d+1}{2} = \frac{1}{2b}, \qquad \frac{d \, \nu(b,d)}{2} = \frac{d}{2b} + r(d),
\]
we obtain
\[
\begin{aligned}
&K_{\nu(b_{\ell_1},d),b_{\ell_1} S}(\mathfrak{X}_{i_1}^{(\ell_1)}) \, K_{\nu(b_{\ell_2},d),b_{\ell_2} S}(\mathfrak{X}_{i_2}^{(\ell_2)}) \\
&\quad= \left\{\prod_{j=1}^2 \frac{|\mathfrak{X}_{i_j}^{(\ell_j)}|^{1/(2 b_{\ell_j})}}{(2 b_{\ell_j})^{d/(2 b_{\ell_j}) + r(d)} \Gamma_d\left(1/(2 b_{\ell_j}) + (d+1)/2\right)}\right\} \times \frac{\etr\big\{-S^{-1} \big(b_{\ell_1}^{-1} \mathfrak{X}_{i_1}^{(\ell_1)} + b_{\ell_2}^{-1} \mathfrak{X}_{i_2}^{(\ell_2)}\big) / 2\big\}}{|S|^{1/(2 b_{\ell_1}) + 1/(2 b_{\ell_2}) + (d+1)}}.
\end{aligned}
\]

For brevity, define
\[
\mathfrak{Y}_{\ell_1,\ell_2} \leqdef b_{\ell_1}^{-1}\mathfrak{X}_{i_1}^{(\ell_1)} + b_{\ell_2}^{-1}\mathfrak{X}_{i_2}^{(\ell_2)} \in \mathcal{S}_{++}^d, \qquad
\alpha_{\ell_1,\ell_2} \leqdef \frac{1}{2 b_{\ell_1}} + \frac{1}{2 b_{\ell_2}} + \frac{d+1}{2}.
\]
By \citet{Olkin1998}, the inversion map $U = S^{-1}$ has Jacobian $\rd U = |S|^{-(d+1)} \rd S$, equivalently $\rd S = |U|^{-(d+1)} \rd U$; see also \citet[p.~272]{Anderson2003} and \citet[p.~26, Eq.~(2.44)]{Gindikin1964}. Therefore,
\[
\int_{\mathcal{S}_{++}^d} \frac{\etr\big(-S^{-1}\mathfrak{Y}_{\ell_1,\ell_2}/2\big)}{|S|^{1/(2 b_{\ell_1}) + 1/(2 b_{\ell_2}) + (d+1)}} \, \rd S
= \int_{\mathcal{S}_{++}^d} \etr\big(-U \mathfrak{Y}_{\ell_1,\ell_2}/2\big) \, |U|^{1/(2 b_{\ell_1}) + 1/(2 b_{\ell_2})} \, \rd U \\
= \int_{\mathcal{S}_{++}^d} \etr\big(-U \mathfrak{Y}_{\ell_1,\ell_2}/2\big) \, |U|^{\alpha_{\ell_1,\ell_2} - (d+1)/2} \, \rd U.
\]
Now, for any $C\in \mathcal{S}_{++}^d$ and any $\alpha$ with $\Re(\alpha) > (d-1)/2$, the identity $\smash{\int_{\mathcal{S}_{++}^d}} \etr(-CU) \, |U|^{\alpha-(d+1)/2} \, \rd U = \Gamma_d(\alpha) \, |C|^{-\alpha}$ follows from \eqref{eq:multivariate.gamma} by the congruence transformation $X = C^{1/2} U C^{1/2}$, whose Jacobian is $\rd X = |C|^{(d+1)/2} \rd U$. Applying it with $C = \mathfrak{Y}_{\ell_1,\ell_2}/2$ and $\alpha = \alpha_{\ell_1,\ell_2}$ yields
\[
\int_{\mathcal{S}_{++}^d} \frac{\etr\big(-S^{-1}\mathfrak{Y}_{\ell_1,\ell_2}/2\big)}{|S|^{1/(2 b_{\ell_1}) + 1/(2 b_{\ell_2}) + (d+1)}} \, \rd S
= \Gamma_d(\alpha_{\ell_1,\ell_2}) \left|\frac{\mathfrak{Y}_{\ell_1,\ell_2}}{2}\right|^{-\alpha_{\ell_1,\ell_2}},
\]
with absolute convergence because $\mathfrak{Y}_{\ell_1,\ell_2}\in \mathcal{S}_{++}^d$ and $\alpha_{\ell_1,\ell_2} > (d-1)/2$.

It follows that
\[
\int_{\mathcal{S}_{++}^d} K_{\nu(b_{\ell_1},d),b_{\ell_1} S}(\mathfrak{X}_{i_1}^{(\ell_1)}) K_{\nu(b_{\ell_2},d),b_{\ell_2} S}(\mathfrak{X}_{i_2}^{(\ell_2)}) \, \rd S
= \left\{\prod_{j=1}^2 \frac{|\mathfrak{X}_{i_j}^{(\ell_j)}|^{1/(2 b_{\ell_j})}}{(2 b_{\ell_j})^{d/(2 b_{\ell_j}) + r(d)} \Gamma_d\left(1/(2 b_{\ell_j}) + (d+1)/2\right)}\right\} \Gamma_d(\alpha_{\ell_1,\ell_2}) \left|\frac{\mathfrak{Y}_{\ell_1,\ell_2}}{2}\right|^{-\alpha_{\ell_1,\ell_2}}.
\]
Since
\[
\left|\frac{\mathfrak{Y}_{\ell_1,\ell_2}}{2}\right|^{-\alpha_{\ell_1,\ell_2}}
= 2^{d\alpha_{\ell_1,\ell_2}} |\mathfrak{Y}_{\ell_1,\ell_2}|^{-\alpha_{\ell_1,\ell_2}}, \qquad
d\alpha_{\ell_1,\ell_2}
= \frac{d}{2 b_{\ell_1}} + \frac{d}{2 b_{\ell_2}} + r(d),
\]
the powers of $2$ simplify to $2^{-r(d)}$. Hence
\begin{equation}\label{eq:kappa.b.closed.form}
\begin{aligned}
&\int_{\mathcal{S}_{++}^d} K_{\nu(b_{\ell_1},d),b_{\ell_1} S}(\mathfrak{X}_{i_1}^{(\ell_1)}) K_{\nu(b_{\ell_2},d),b_{\ell_2} S}(\mathfrak{X}_{i_2}^{(\ell_2)}) \, \rd S \\
&\qquad= \frac{1}{2^{r(d)}} \, b_{\ell_1}^{-d/(2 b_{\ell_1}) - r(d)} \, b_{\ell_2}^{-d/(2 b_{\ell_2}) - r(d)} \, \frac{\Gamma_d\left(1/(2 b_{\ell_1}) + 1/(2 b_{\ell_2}) + (d+1)/2\right)}{\Gamma_d\left(1/(2 b_{\ell_1}) + (d+1)/2\right)\Gamma_d\left(1/(2 b_{\ell_2}) + (d+1)/2\right)} \\
&\hspace{35mm}\times \frac{|\mathfrak{X}_{i_1}^{(\ell_1)}|^{1/(2 b_{\ell_1})} |\mathfrak{X}_{i_2}^{(\ell_2)}|^{1/(2 b_{\ell_2})}}
{|b_{\ell_1}^{-1} \mathfrak{X}_{i_1}^{(\ell_1)} + b_{\ell_2}^{-1} \mathfrak{X}_{i_2}^{(\ell_2)}|^{1/(2 b_{\ell_1}) + 1/(2 b_{\ell_2}) + (d+1)/2}}.
\end{aligned}
\end{equation}
By applying this last formula in \eqref{eq:Donald.4}, we obtain
\[
\begin{aligned}
T(n_1,n_2)
&= \frac{1}{2^{r(d)}} \sum_{i=1}^2 n_i \sum_{\ell_1=1}^2 \sum_{\ell_2=1}^2 \frac{a_{i,\ell_1} a_{i,\ell_2}}{n_{\ell_1} n_{\ell_2}} b_{\ell_1}^{-d/(2 b_{\ell_1}) - r(d)} b_{\ell_2}^{-d/(2 b_{\ell_2}) - r(d)} \frac{\Gamma_d(1/(2 b_{\ell_1}) + 1/(2 b_{\ell_2}) + (d+1)/2)}{\Gamma_d(1/(2 b_{\ell_1}) + (d+1)/2) \Gamma_d(1/(2 b_{\ell_2}) + (d+1)/2)} \\[1mm]
&\hspace{55mm}\times \sum_{i_1=1}^{n_{\ell_1}} \sum_{i_2=1}^{n_{\ell_2}} \frac{|\mathfrak{X}_{i_1}^{(\ell_1)}|^{1/(2 b_{\ell_1})} |\mathfrak{X}_{i_2}^{(\ell_2)}|^{1/(2 b_{\ell_2})}}{|b_{\ell_1}^{-1} \mathfrak{X}_{i_1}^{(\ell_1)} + b_{\ell_2}^{-1} \mathfrak{X}_{i_2}^{(\ell_2)}|^{1/(2 b_{\ell_1}) + 1/(2 b_{\ell_2}) + (d+1)/2}}.
\end{aligned}
\]
This concludes the proof.

\subsection{Proof of Theorem~\ref{thm:null.shrinking.bandwidth}}
\setcounter{pstep}{0}

\noindent The proof is divided into steps to make it more digestible. We first rewrite the statistic $T(n_1,n_2)$ as a quadratic form in the overlap kernel $\kappa_b$ (Step~\ref{step:1}), then Hoeffding-center the kernel and decompose the resulting quadratic form into diagonal and off-diagonal components (Step~\ref{step:2}). We identify the expectation of $T(n_1,n_2)$ under $H_0$ (Step~\ref{step:3}), show that the diagonal contribution is negligible after the relevant scaling (Step~\ref{step:4}), and compute the variance of the degenerate off-diagonal term (Step~\ref{step:5}). We then establish a central limit theorem for this degenerate quadratic form by verifying de Jong's conditions (Step~\ref{step:6}). Finally, we identify the centering and variance constants using the technical lemmas gathered in Appendix~\ref{app:tech.lemmas} (Step~\ref{step:7}).

Throughout the proof, write $b = b_n$ for brevity. Also, let $\mathfrak{X},\mathfrak{X}',\mathfrak{X}'',\mathfrak{X}'''$ denote independent random matrices with common density $f$ supported on $\mathcal{S}_{++}^d$.

\step{a $V$-statistic representation via the overlap kernel $\kappa_b$}\label{step:1}
For $X,Y\in \mathcal{S}_{++}^d$, define the overlap kernel
\[
\kappa_b(X,Y) \leqdef \int_{\mathcal{S}_{++}^d} K_{\nu(b,d), \, bS}(X) \, K_{\nu(b,d), \, bS}(Y) \, \rd S.
\]
Substituting \eqref{eq:Donald.1} into \eqref{eq:Donald.2} with $b_1 = b_2 = b$, expanding the square, and interchanging sums and integrals yields
\begin{equation}\label{eq:T.kappa.representation}
T(n_1,n_2) = \frac{n_1n_2}{n}\Bigg[\frac{1}{n_1^2} \sum_{i=1}^{n_1} \sum_{j=1}^{n_1}\kappa_b\big(\mathfrak{X}^{(1)}_i,\mathfrak{X}^{(1)}_j\big)
+ \frac{1}{n_2^2} \sum_{i=1}^{n_2} \sum_{j=1}^{n_2}\kappa_b\big(\mathfrak{X}^{(2)}_i,\mathfrak{X}^{(2)}_j\big) - \frac{2}{n_1n_2} \sum_{i=1}^{n_1} \sum_{j=1}^{n_2}\kappa_b\big(\mathfrak{X}^{(1)}_i,\mathfrak{X}^{(2)}_j\big)\Bigg].
\end{equation}

Under $H_0$, the pooled sample is iid. Indeed, let
\[
\mathfrak{Z}_i \leqdef
\begin{cases}
\mathfrak{X}^{(1)}_i, & i\in[n_1], \\
\mathfrak{X}^{(2)}_{i - n_1}, & i\in \{n_1 + 1,\ldots,n\},
\end{cases}
\qquad \text{so that } \mathfrak{Z}_1,\ldots,\mathfrak{Z}_n\stackrel{\mathrm{iid}}{\sim} f ~\text{under $H_0$}.
\]
Define the weights
\begin{equation}\label{eq:alpha.weights}
\alpha_i \leqdef
\begin{cases}
\displaystyle \frac{\sqrt{n_1n_2/n}}{n_1} = \sqrt{\frac{n_2}{nn_1}}, & i\in[n_1], \\[3mm]
\displaystyle -\frac{\sqrt{n_1n_2/n}}{n_2} = -\sqrt{\frac{n_1}{nn_2}}, & i\in \{n_1 + 1,\ldots,n\}.
\end{cases}
\end{equation}
A direct calculation gives
\begin{equation}\label{eq:alpha.sums}
\sum_{i=1}^n \alpha_i = 0, \qquad
\sum_{i=1}^n \alpha_i^2 = \frac{n_2}{n} + \frac{n_1}{n}=1, \qquad
\max_{1\leq i\leq n}\alpha_i^2 \ll \frac{1}{n}.
\end{equation}
Expanding the quadratic form $\sum_{i,j=1}^n \alpha_i\alpha_j\kappa_b(\mathfrak{Z}_i,\mathfrak{Z}_j)$ using \eqref{eq:alpha.weights} yields exactly \eqref{eq:T.kappa.representation}, hence
\begin{equation}\label{eq:T.quadratic.form}
T(n_1,n_2) = \sum_{i=1}^n \sum_{j=1}^n \alpha_i\alpha_j \, \kappa_b(\mathfrak{Z}_i,\mathfrak{Z}_j).
\end{equation}

\step{centering the kernel (degeneracy) and separating diagonal/off-diagonal parts}\label{step:2}
Let
\[
m_b(X) \leqdef \EE\{\kappa_b(X,\mathfrak{X})\}, \qquad
m_{b,0} \leqdef \EE\{\kappa_b(\mathfrak{X},\mathfrak{X}')\}.
\]
Define the Hoeffding-centered kernel
\begin{equation}\label{eq:h.b.def}
h_b(X,Y) \leqdef \kappa_b(X,Y) - m_b(X) - m_b(Y) + m_{b,0}.
\end{equation}
Then, for every fixed $X\in \mathcal{S}_{++}^d$,
\begin{equation}\label{eq:degeneracy}
\EE\{h_b(X,\mathfrak{X})\}
= \EE\{\kappa_b(X,\mathfrak{X})\} - m_b(X) - \EE\{m_b(\mathfrak{X})\} + m_{b,0}
= m_b(X) - m_b(X) - m_{b,0} + m_{b,0}
= 0,
\end{equation}
so the kernel is \emph{degenerate} of order $1$.

Using \eqref{eq:h.b.def} in \eqref{eq:T.quadratic.form} gives
\[
T(n_1,n_2)
= \sum_{i,j=1}^n \alpha_i\alpha_j \, h_b(\mathfrak{Z}_i,\mathfrak{Z}_j) + \sum_{i,j=1}^n \alpha_i\alpha_j\{m_b(\mathfrak{Z}_i) + m_b(\mathfrak{Z}_j) - m_{b,0}\}.
\]
The second term vanishes \emph{exactly} because $\sum_{i=1}^n \alpha_i = 0$ in \eqref{eq:alpha.sums}:
\[
\sum_{i,j=1}^n \alpha_i\alpha_j \, m_b(\mathfrak{Z}_i) = \left(\sum_{j=1}^n \alpha_j\right)\left(\sum_{i=1}^n \alpha_i m_b(\mathfrak{Z}_i)\right) = 0, \qquad
\sum_{i,j=1}^n \alpha_i\alpha_j \, m_b(\mathfrak{Z}_j) = 0, \qquad
\sum_{i,j=1}^n \alpha_i\alpha_j \, m_{b,0} = m_{b,0} \left(\sum_{i=1}^n \alpha_i\right)^2 = 0.
\]
Hence, $T(n_1,n_2) = \sum_{i=1}^n \sum_{j=1}^n \alpha_i\alpha_j \, h_b(\mathfrak{Z}_i,\mathfrak{Z}_j)$. Now split into diagonal and off-diagonal parts:
\begin{equation}\label{eq:T.D.U}
T(n_1,n_2) = D_n(b) + U_n(b),
\end{equation}
where
\[
D_n(b) \leqdef \sum_{i=1}^n \alpha_i^2 \, h_b(\mathfrak{Z}_i,\mathfrak{Z}_i), \qquad
U_n(b) \leqdef 2\sum_{1\leq i < j\leq n}\alpha_i\alpha_j \, h_b(\mathfrak{Z}_i,\mathfrak{Z}_j).
\]

\step{expectation of $T(n_1,n_2)$ under $H_0$}\label{step:3}
By \eqref{eq:degeneracy}, note that $\EE\{h_b(\mathfrak{Z}_i,\mathfrak{Z}_j)\} = 0$ for $i \neq j$, hence $\EE\{U_n(b)\} = 0$. Therefore,
\[
\EE\{T(n_1,n_2)\} = \EE\{D_n(b)\} = \left(\sum_{i=1}^n \alpha_i^2\right)\EE\{h_b(\mathfrak{X},\mathfrak{X})\} = \EE\{h_b(\mathfrak{X},\mathfrak{X})\},
\]
where we used $\sum_{i=1}^n \alpha_i^2 = 1$ from \eqref{eq:alpha.sums}. Since $h_b(X,X) = \kappa_b(X,X) - 2 m_b(X) + m_{b,0}$, taking expectations and using $\EE\{m_b(\mathfrak{X})\} = m_{b,0}$ gives
\begin{equation}\label{eq:mean.tau.b}
\tau_b \leqdef \EE\{T(n_1,n_2)\} = \EE\{h_b(\mathfrak{X},\mathfrak{X})\} = \EE\{\kappa_b(\mathfrak{X},\mathfrak{X})\} - m_{b,0}.
\end{equation}
In particular,
\[
m_{b,0} = \int_{\mathcal{S}_{++}^d} \big(\EE\{K_{\nu(b,d),bS}(\mathfrak{X})\}\big)^2 \rd S,
\]
where the expectation inside the square is the smoothed target density. Under the stated boundedness and continuity assumptions on $f$, $m_{b,0} \ll 1$ as $b \downarrow 0$; indeed $m_{b,0} = \EE\{m_b(\mathfrak{X})\} \leq \sup_{X\in \mathcal{S}_{++}^d} m_b(X) \ll 1$ by the argument leading to \eqref{eq:int.kappa.dy} in the proof of Lemma~\ref{lem:h.cycle.fourth.moment}. Its exact asymptotic behavior is irrelevant for \eqref{eq:clt.null.shrinking}; all that matters here is the bound $m_{b,0} \ll 1$, which implies $b^{r/4}m_{b,0} \to 0$.

\step{the diagonal fluctuation $D_n(b) - \tau_b$ is negligible after scaling}\label{step:4}
Write
\[
W_i \leqdef h_b(\mathfrak{Z}_i,\mathfrak{Z}_i) - \EE\{h_b(\mathfrak{X},\mathfrak{X})\}, \qquad i \in [n].
\]
Then $W_1,\ldots,W_n$ are iid with mean $0$, and $D_n(b) - \tau_b = \sum_{i=1}^n \alpha_i^2 \, W_i$. Hence, by independence and \eqref{eq:alpha.sums},
\begin{equation}\label{eq:Var.Dn}
\Var\{D_n(b) - \tau_b\}
= \sum_{i=1}^n \alpha_i^4 \, \Var(W_1)
\leq \left(\max_{1\leq i\leq n}\alpha_i^2\right)\left(\sum_{i=1}^n \alpha_i^2\right)\EE\{W_1^2\}
\ll \frac{1}{n} \, \EE\{h_b(\mathfrak{X},\mathfrak{X})^2\}.
\end{equation}
By Lemma~\ref{lem:h.centered.p.moment.control} with $p = 2$, whose finite-integrability hypotheses here are precisely $V_{2,2} < \infty$ from \eqref{eq:moment.conditions} together with $\int_{\mathcal{S}_{++}^d} f(S) \, |S|^{-(d + 1)} \, \rd S < \infty$, which we assumed in \eqref{eq:moment.conditions.2}, we have $\EE\{h_b(\mathfrak{X},\mathfrak{X})^2\} \ll b^{-r}$ as $b \downarrow 0$. Inserting this into \eqref{eq:Var.Dn} gives
\[
\Var\{D_n(b) - \tau_b\} \ll \frac{b^{-r}}{n}.
\]
Therefore, by Chebyshev's inequality and \eqref{eq:nb.condition}, $b^{r/4} \, \{D_n(b) - \tau_b\} \stackrel{\PP}{\longrightarrow} 0$. As a consequence of the decomposition in \eqref{eq:T.D.U}, the asymptotic distribution of $b^{r/4}\{T(n_1,n_2) - \tau_b\}$ is the same as that of $b^{r/4}U_n(b)$.

\step{variance of the degenerate quadratic form $U_n(b)$}\label{step:5}
We claim that
\begin{equation}\label{eq:Var.Un}
\Var\{U_n(b)\} = 4 \sum_{1\leq i < j\leq n}\alpha_i^2\alpha_j^2 \, \EE\{h_b(\mathfrak{X},\mathfrak{X}')^2\}.
\end{equation}
Indeed, expand
\[
\Var\{U_n(b)\} = 4 \sum_{1 \leq i < j \leq n} \sum_{1 \leq k < \ell \leq n} \alpha_i\alpha_j\alpha_k\alpha_{\ell} \, \Cov(h_b(\mathfrak{Z}_i,\mathfrak{Z}_j),h_b(\mathfrak{Z}_k,\mathfrak{Z}_{\ell})).
\]
If $\{i,j\}\cap\{k,\ell\} = \varnothing$, the two terms are independent, hence the covariance is $0$. If exactly one index is shared, say $k = i$ and $\ell\notin\{i,j\}$, then using degeneracy \eqref{eq:degeneracy},
\[
\EE[h_b(\mathfrak{Z}_i,\mathfrak{Z}_j) \, h_b(\mathfrak{Z}_i,\mathfrak{Z}_{\ell})]
= \EE\left[ \, \EE\{h_b(\mathfrak{Z}_i,\mathfrak{Z}_j) \, h_b(\mathfrak{Z}_i,\mathfrak{Z}_{\ell}) \mid \mathfrak{Z}_i\}\right]
= \EE\left[ \, \EE\{h_b(\mathfrak{Z}_i,\mathfrak{Z}_j) \mid \mathfrak{Z}_i\} \, \EE\{h_b(\mathfrak{Z}_i,\mathfrak{Z}_{\ell}) \mid \mathfrak{Z}_i\}\right]
= 0,
\]
and since each term has mean $0$, the covariance is $0$. Therefore the only nonzero terms are those with $(k,\ell) = (i,j)$, which yields \eqref{eq:Var.Un}.

Now compute the weight sum:
\begin{equation}\label{eq:alpha.weight.sum}
\sum_{1\leq i < j\leq n}\alpha_i^2\alpha_j^2
= \frac{1}{2}\left\{\left(\sum_{i=1}^n \alpha_i^2\right)^2 - \sum_{i=1}^n \alpha_i^4\right\}
= \frac{1}{2}\left\{1 - \sum_{i=1}^n \alpha_i^4\right\}
= \frac{1}{2} + \OO\left(\frac{1}{n}\right),
\end{equation}
because $\sum_{i=1}^n \alpha_i^2 = 1$ and $\sum_{i=1}^n \alpha_i^4 \leq (\max_{1\leq i \leq n} \alpha_i^2)\sum_{i=1}^n \alpha_i^2 \ll 1/n$ by \eqref{eq:alpha.sums}. Hence, combining \eqref{eq:Var.Un} and \eqref{eq:alpha.weight.sum},
\begin{equation}\label{eq:Var.Un.asymp}
\Var\{U_n(b)\} = \left(2 + \oo(1)\right) \EE\{h_b(\mathfrak{X},\mathfrak{X}')^2\}.
\end{equation}

\step{a central limit theorem for $U_n(b)$ under \eqref{eq:nb.condition}}\label{step:6}
We verify de Jong's row-negligibility and normalized fourth-moment conditions. For $1 \leq i < j \leq n$, let
\[
\xi_{ij} \leqdef 2\alpha_i\alpha_j \, h_b(\mathfrak{Z}_i,\mathfrak{Z}_j),
\]
and extend this to a symmetric array by setting $\xi_{ji} \leqdef \xi_{ij}$ for $j < i$ and $\xi_{ii} \leqdef 0$. Then $U_n(b) = \sum_{1 \leq i < j \leq n} \xi_{ij}$. By the symmetry of $h_b$ and the degeneracy in \eqref{eq:degeneracy},
\[
\EE(\xi_{ij}\mid \mathfrak{Z}_i) = 0, \qquad
\EE(\xi_{ij}\mid \mathfrak{Z}_j) = 0, \qquad 1 \leq i \neq j \leq n,
\]
so the array is clean. De Jong's central limit theorem for clean quadratic forms \citep[see][Theorem~2.1]{deJong1987} implies
\begin{equation}\label{eq:CLT.U}
\frac{U_n(b)}{\sqrt{\Var\{U_n(b)\}}} \rightsquigarrow \mathcal{N}(0,1),
\end{equation}
provided that, as $n \to \infty$,
\begin{equation}
(a) ~~ \frac{\max_{1 \leq i \leq n} \sum_{j=1}^n \EE(\xi_{ij}^2)}{\Var\{U_n(b)\}} \longrightarrow 0, \qquad \quad
(b) ~~ \frac{\EE\{U_n(b)^4\}}{\Var\{U_n(b)\}^2} \longrightarrow 3.
\end{equation}

\medskip
\noindent
\emph{Verification of de Jong's condition $(a)$.} For each $i \in \{1,\ldots,n\}$,
\[
\sum_{j=1}^n \EE(\xi_{ij}^2)
= 4 \alpha_i^2 \sum_{\substack{1 \leq j \leq n \\ j \neq i}} \alpha_j^2 \, \EE\{h_b(\mathfrak{X},\mathfrak{X}')^2\}
\leq 4 \alpha_i^2 \, \EE\{h_b(\mathfrak{X},\mathfrak{X}')^2\},
\]
hence, by \eqref{eq:alpha.sums},
\[
\max_{1\leq i\leq n} \sum_{j=1}^n \EE(\xi_{ij}^2)
\leq 4 \, \EE\{h_b(\mathfrak{X},\mathfrak{X}')^2\} \max_{1\leq i\leq n} \alpha_i^2
\ll \frac{1}{n} \EE\{h_b(\mathfrak{X},\mathfrak{X}')^2\}.
\]
Since $\Var\{U_n(b)\} = (2 + \oo(1)) \, \EE\{h_b(\mathfrak{X},\mathfrak{X}')^2\}$ by \eqref{eq:Var.Un.asymp}, we obtain
\[
\frac{\max_{1 \leq i \leq n} \sum_{j=1}^n \EE(\xi_{ij}^2)}{\Var\{U_n(b)\}} \ll \frac{1}{n} \to 0,
\]
completing the verification of de Jong's condition $(a)$.

\medskip
\noindent
\emph{Verification of de Jong's condition $(b)$.}
First record the moment orders needed below. By Lemma~\ref{lem:kappa.offdiag.p.moment} with $p = 2$, which uses the finiteness of $V_{2,2}$ in \eqref{eq:moment.conditions},
\[
\EE\{\kappa_b(\mathfrak{X},\mathfrak{X}')^2\} \asymp b^{-r/2}.
\]
Moreover, boundedness of $f$ implies $\smash{\sup_{X\in \mathcal{S}_{++}^d}m_b(X) + |m_{b,0}| \ll 1}$ uniformly for small $b$ (see the argument leading to \eqref{eq:int.kappa.dy} in the proof of Lemma~\ref{lem:h.cycle.fourth.moment}), hence
\[
\EE\{h_b(\mathfrak{X},\mathfrak{X}')^2\} = \EE\{\kappa_b(\mathfrak{X},\mathfrak{X}')^2\} + \OO(1) \asymp b^{-r/2},
\]
and in particular
\begin{equation}\label{eq:var.est}
\Var\{U_n(b)\}^2 \asymp b^{-r},
\end{equation}
by \eqref{eq:Var.Un.asymp}. Under the additional integrability assumption $\smash{\int_{\mathcal{S}_{++}^d} f(S)^2 \, |S|^{-3(d + 1)/2} \, \rd S < \infty}$, which we assumed in \eqref{eq:moment.conditions.2}, Lemma~\ref{lem:kappa.offdiag.p.moment} with $p = 4$ yields $\EE\{\kappa_b(\mathfrak{X},\mathfrak{X}')^4\} \ll b^{-3r/2}$. Since $\smash{\sup_{X\in \mathcal{S}_{++}^d}m_b(X) \ll 1}$ and $|m_{b,0}| \ll 1$ uniformly for small $b$, we have $|h_b(X,Y)|^4 \ll \kappa_b(X,Y)^4 + 1$ uniformly in $(X,Y)$, and therefore
\begin{equation}\label{eq:h4.order}
\EE\{h_b(\mathfrak{X},\mathfrak{X}')^4\} \ll b^{-3r/2}.
\end{equation}

Because the array $(\xi_{ij})$ is clean, the fourth-moment decomposition for clean quadratic forms (see \citet[Table~1]{deJong1987}) yields
\begin{equation}\label{eq:U4.deJong.decomp}
\begin{aligned}
\EE\{U_n(b)^4\}
&= G_{I,n} + 6 \, G_{II,n} + 12 \, G_{III,n} + 24 \, G_{IV,n} + 6 \, G_{V,n} \\
&= G_{I,n} + 6 \, G_{II,n} + 12 \, G_{III,n} + 24 \, G_{IV,n} + 3 \, \Var\{U_n(b)\}^2 + \oo\left(\Var\{U_n(b)\}^2\right),
\end{aligned}
\end{equation}
where, using the symmetry $\xi_{ji} = \xi_{ij}$, the graph-sums in Table~1 of \citet{deJong1987} are
\[
\begin{aligned}
G_{I,n} &\leqdef \sum_{1 \leq i < j \leq n} \EE(\xi_{ij}^4), \qquad
G_{II,n} \leqdef \sum_{1 \leq i < j < k \leq n}\big\{\EE(\xi_{ij}^2\xi_{ik}^2) + \EE(\xi_{ij}^2\xi_{jk}^2) + \EE(\xi_{ik}^2\xi_{jk}^2)\big\}, \\
G_{III,n} &\leqdef \sum_{1 \leq i < j < k \leq n}\big\{\EE(\xi_{ij}^2\xi_{ik}\xi_{jk}) + \EE(\xi_{ik}^2\xi_{ij}\xi_{jk}) + \EE(\xi_{jk}^2\xi_{ij}\xi_{ik})\big\}, \\
G_{IV,n} &\leqdef \sum_{1 \leq i < j < k < \ell \leq n}\big\{\EE(\xi_{ij}\xi_{ik}\xi_{j\ell}\xi_{k\ell}) + \EE(\xi_{ij}\xi_{i\ell}\xi_{jk}\xi_{k\ell}) + \EE(\xi_{ik}\xi_{i\ell}\xi_{jk}\xi_{j\ell})\big\}, \\
G_{V,n} &\leqdef \sum_{1 \leq i < j < k < \ell \leq n}\big\{\EE(\xi_{ij}^2\xi_{k\ell}^2) + \EE(\xi_{ik}^2\xi_{j\ell}^2) + \EE(\xi_{i\ell}^2\xi_{jk}^2)\big\}.
\end{aligned}
\]
The second equality in \eqref{eq:U4.deJong.decomp} is due to
\[
G_{V,n} = \frac{1}{2} \Var\{U_n(b)\}^2 + \oo\left(\Var\{U_n(b)\}^2\right),
\]
which is a consequence of condition $(a)$; see \citet[Eq.~(6)]{deJong1987}. We show that each term $G_{I,n},G_{II,n},G_{III,n},G_{IV,n}$ is $\oo(\Var\{U_n(b)\}^2)$.

\smallskip
\noindent
\emph{(i) The $G_{I,n}$ term.}
Since $\EE(\xi_{ij}^4) = 16\alpha_i^4\alpha_j^4 \, \EE\{h_b(\mathfrak{X},\mathfrak{X}')^4\}$,
\[
G_{I,n}
\ll \EE\{h_b(\mathfrak{X},\mathfrak{X}')^4\} \left(\sum_{i=1}^n \alpha_i^4\right)^2
\ll \frac{1}{n^2 b^{3r/2}},
\]
by \eqref{eq:h4.order} and \eqref{eq:alpha.sums}. Since $\Var\{U_n(b)\}^2 \asymp b^{-r}$ by \eqref{eq:var.est}, it follows from \eqref{eq:nb.condition} that
\[
\frac{G_{I,n}}{\Var\{U_n(b)\}^2} \ll \frac{1}{n^2 b^{r/2}} = \oo(1).
\]

\smallskip
\noindent
\emph{(ii) The $G_{II,n}$ term (star graphs).}
A typical summand in $G_{II,n}$ has the form $\EE(\xi_{ij}^2\xi_{ik}^2)$ with $i,j,k$ distinct:
\[
\EE(\xi_{ij}^2\xi_{ik}^2)
= 16 \alpha_i^4 \alpha_j^2 \alpha_k^2 \, \EE\left[h_b(\mathfrak{Z}_i,\mathfrak{Z}_j)^2 \, h_b(\mathfrak{Z}_i,\mathfrak{Z}_k)^2\right]
\leq 16 \alpha_i^4 \alpha_j^2 \alpha_k^2 \, \EE\{h_b(\mathfrak{X},\mathfrak{X}')^4\},
\]
by Cauchy--Schwarz. Summing over indices yields
\[
G_{II,n}
\ll \EE\{h_b(\mathfrak{X},\mathfrak{X}')^4\} \sum_{i=1}^n \alpha_i^4\sum_{1\leq j < k\leq n}\alpha_j^2\alpha_k^2
\ll \frac{1}{n b^{3r/2}},
\]
using \eqref{eq:h4.order}, and using \eqref{eq:alpha.sums} to deduce that $\sum_{i=1}^n \alpha_i^4 \ll 1/n$ and $\sum_{1\leq j < k\leq n}\alpha_j^2\alpha_k^2 \ll 1$. Since $\Var\{U_n(b)\}^2 \asymp b^{-r}$ by \eqref{eq:var.est}, it follows from \eqref{eq:nb.condition} that
\[
\frac{G_{II,n}}{\Var\{U_n(b)\}^2} \ll \frac{1}{n b^{r/2}} = \oo(1).
\]

\smallskip
\noindent
\emph{(iii) The $G_{III,n}$ term.}
Since $|G_{III,n}| \leq G_{II,n}$ by \citet[Eq.~(3)]{deJong1987}, we get
\[
\frac{G_{III,n}}{\Var\{U_n(b)\}^2} = \oo(1),
\]
from $(ii)$.

\smallskip
\noindent
\emph{(iv) The $G_{IV,n}$ term (cycle graphs).}
A representative cycle summand has the form $\EE(\xi_{12}\xi_{13}\xi_{42}\xi_{43})$, which equals
\[
16 \, \alpha_1^2\alpha_4^2\alpha_2^2\alpha_3^2 \, \EE\left[h_b(\mathfrak{X},\mathfrak{X}') \, h_b(\mathfrak{X},\mathfrak{X}'') \, h_b(\mathfrak{X}''',\mathfrak{X}') \, h_b(\mathfrak{X}''',\mathfrak{X}'')\right],
\]
with $\mathfrak{X},\mathfrak{X}',\mathfrak{X}'',\mathfrak{X}''' \stackrel{\mathrm{iid}}{\sim} f$. By Lemma~\ref{lem:h.cycle.fourth.moment}, which again uses our assumption $V_{2,2} < \infty$ from \eqref{eq:moment.conditions}, we have
\[
\EE\left[h_b(\mathfrak{X},\mathfrak{X}') \, h_b(\mathfrak{X},\mathfrak{X}'') \, h_b(\mathfrak{X}''',\mathfrak{X}') \, h_b(\mathfrak{X}''',\mathfrak{X}'')\right]
\ll b^{-r/2}, \qquad b \downarrow 0.
\]
Since $\sum_{1\leq i < j < k < \ell\leq n}\alpha_i^2\alpha_j^2\alpha_k^2\alpha_\ell^2 \ll 1$, it follows that $|G_{IV,n}| \ll b^{-r/2}$, and therefore, using \eqref{eq:var.est},
\[
\left|\frac{G_{IV,n}}{\Var\{U_n(b)\}^2}\right| \ll b^{r/2} = \oo(1).
\]

Combining $(i)$--$(iv)$ in \eqref{eq:U4.deJong.decomp} yields
\[
\frac{\EE[U_n(b)^4]}{\Var\{U_n(b)\}^2} \to 3,
\]
completing the verification of de Jong's condition $(b)$.

By Step~\ref{step:4}, $b^{r/4}\{T(n_1,n_2) - \tau_b\}$ has the same limit as $b^{r/4}U_n(b)$. Hence, using \eqref{eq:CLT.U},
\begin{equation}\label{eq:CLT.centered.by.tau}
\frac{b^{r/4}\{T(n_1,n_2) - \tau_b\}}{\sqrt{b^{r/2} \, \Var\{U_n(b)\}}} \rightsquigarrow \mathcal{N}(0,1).
\end{equation}

\step{identify the centering and the limit of $b^{r/2} \, \Var\{U_n(b)\}$}\label{step:7}
We first treat the centering. By \eqref{eq:mean.tau.b} and Lemma~\ref{lem:kappa.diagonal.p.moment} with $p = 1$, which uses $V_{1,1} = \int_{\mathcal{S}_{++}^d} f(S) \, |S|^{-(d + 1)/2} \, \rd S < \infty$ from \eqref{eq:moment.conditions}, we have $\EE\{\kappa_b(\mathfrak{X},\mathfrak{X})\} = A_d(b) \, V_{1,1}$. Hence
\[
\tau_b
= \EE\{\kappa_b(\mathfrak{X},\mathfrak{X})\} - m_{b,0}
= A_d(b) \, V_{1,1} - m_{b,0}.
\]
Since $m_{b,0} \ll 1$ as explained in Step~\ref{step:3}, we obtain
\begin{equation}\label{eq:centering.shift}
b^{r/4}\{\tau_b - A_d(b) \, V_{1,1}\} = - \, b^{r/4}m_{b,0} \longrightarrow 0.
\end{equation}

We now treat the variance. By \eqref{eq:Var.Un.asymp}, it is enough to identify the limit of $b^{r/2} \, \EE\{h_b(\mathfrak{X},\mathfrak{X}')^2\}$. Let
\[
a_b(X,Y) \leqdef m_b(X) + m_b(Y) - m_{b,0},
\]
so that $h_b(X,Y) = \kappa_b(X,Y) - a_b(X,Y)$. By boundedness of $f$, the argument leading to \eqref{eq:int.kappa.dy} in the proof of Lemma~\ref{lem:h.cycle.fourth.moment} implies that $\sup_{X,Y\in \mathcal{S}_{++}^d} |a_b(X,Y)| \ll 1$ for all sufficiently small $b$, hence $\EE\{a_b(\mathfrak{X},\mathfrak{X}')^2\} \ll 1$ and
\[
\left|\EE\{\kappa_b(\mathfrak{X},\mathfrak{X}') \, a_b(\mathfrak{X},\mathfrak{X}')\}\right| \leq \left(\sup_{X,Y\in \mathcal{S}_{++}^d} |a_b(X,Y)|\right) \EE\{\kappa_b(\mathfrak{X},\mathfrak{X}')\} \ll 1.
\]
Expanding the square gives
\[
\EE\{h_b(\mathfrak{X},\mathfrak{X}')^2\}
= \EE\{\kappa_b(\mathfrak{X},\mathfrak{X}')^2\} - 2 \, \EE\{\kappa_b(\mathfrak{X},\mathfrak{X}') \, a_b(\mathfrak{X},\mathfrak{X}')\} + \EE\{a_b(\mathfrak{X},\mathfrak{X}')^2\}
= \EE\{\kappa_b(\mathfrak{X},\mathfrak{X}')^2\} + \OO(1).
\]
Therefore, since $b^{r/2} \downarrow 0$, $b^{r/2} \, \EE\{h_b(\mathfrak{X},\mathfrak{X}')^2\} = b^{r/2} \, \EE\{\kappa_b(\mathfrak{X},\mathfrak{X}')^2\} + \oo(1)$. Applying Lemma~\ref{lem:kappa.offdiag.p.moment} with $p = 2$, again using $V_{2,2} < \infty$ from \eqref{eq:moment.conditions}, yields
\[
b^{r/2} \, \EE\{\kappa_b(\mathfrak{X},\mathfrak{X}')^2\} \longrightarrow c_d^2 \, J_{d,2} \, V_{2,2}, \qquad b \downarrow 0,
\]
where $V_{2,2} = \int_{\mathcal{S}_{++}^d} f(S)^2 \, |S|^{-(d + 1)/2} \, \rd S$ and
\[
J_{d,2}
= \left(\sqrt{\frac{8\pi}{2}}\right)^{d}\left(\sqrt{\frac{4\pi}{2}}\right)^{d(d - 1)/2}
= (2\sqrt{\pi})^d(\sqrt{2\pi})^{d(d - 1)/2}
\equiv I_d.
\]
Combining with \eqref{eq:Var.Un.asymp} gives
\[
b^{r/2} \, \Var\{U_n(b)\}
= b^{r/2}\left(2 + \oo(1)\right) \EE\{h_b(\mathfrak{X},\mathfrak{X}')^2\}
\longrightarrow 2 c_d^2 I_d V_{2,2}
\equiv v_d V_{2,2}, \qquad b \downarrow 0.
\]
Therefore, \eqref{eq:CLT.centered.by.tau} implies $b^{r/4}\{T(n_1,n_2) - \tau_b\} \rightsquigarrow \mathcal{N}(0, v_d V_{2,2})$. Finally, by \eqref{eq:centering.shift},
\[
b^{r/4}\{T(n_1,n_2) - A_d(b)V_{1,1}\} = b^{r/4}\{T(n_1,n_2) - \tau_b\} + b^{r/4}\{\tau_b - A_d(b)V_{1,1}\},
\]
and the second term on the right-hand side converges to $0$ by \eqref{eq:centering.shift}. Applying Slutsky's theorem then yields
\[
b^{r/4}\{T(n_1,n_2) - A_d(b)V_{1,1}\} \rightsquigarrow \mathcal{N}(0, v_d V_{2,2}),
\]
which is \eqref{eq:clt.null.shrinking}. This completes the proof of Theorem~\ref{thm:null.shrinking.bandwidth}.

\subsection{Proof of Theorem~\ref{thm:null.fixed.bandwidth}}

For every $X \in \mathcal{S}_{++}^d$, Lemma~\ref{lem:kappa.diagonal.p.moment} gives
\[
\|\phi_b(X)\|_{\mathbb{H}}^2
= \int_{\mathcal{S}_{++}^d} K_{\nu(b,d), \, bS}(X)^2 \, \rd S
= \kappa_b(X,X)
= A_d(b) \, |X|^{-(d + 1)/2}
< \infty,
\]
so $\phi_b(X) \in \mathbb{H}$. We begin by rewriting $T(n_1,n_2)$ as a squared norm in $\mathbb{H}$. Write $n_{\mathrm{eff}} \leqdef n_1n_2/n$. For $\ell \in \{1,2\}$, define the $\mathbb{H}$-valued sample mean $\smash{\bar\phi_{\ell} \leqdef \frac{1}{n_\ell} \sum_{i=1}^{n_\ell}\phi_b(\mathfrak{X}_i^{(\ell)})}$. By the definition \eqref{eq:Donald.1} (with $b_\ell = b$), we have $\smash{\hat{f}_{n_\ell,b}^{(\ell)}(S)} = \bar\phi_{\ell}(S)$ for every $S \in \mathcal{S}_{++}^d$. Hence,
\begin{equation}\label{eq:T.as.norm}
T(n_1,n_2)
= \frac{n_1n_2}{n} \int_{\mathcal{S}_{++}^d}\big(\bar\phi_1(S) - \bar\phi_2(S)\big)^2 \, \rd S
= n_{\mathrm{eff}} \|\bar\phi_1 - \bar\phi_2\|_{\mathbb{H}}^2
= \|\Delta_n\|_{\mathbb{H}}^2, \qquad \Delta_n \leqdef \sqrt{n_{\mathrm{eff}}} \, (\bar\phi_1 - \bar\phi_2).
\end{equation}

Next, we justify that $\phi_b(\mathfrak{X})$ is an $\mathbb{H}$-valued random element with finite second moment. First, by the explicit closed form \eqref{eq:kappa.closed.form.appendix}, the overlap kernel $(X,Y) \mapsto \kappa_b(X,Y)$ is continuous on $\mathcal{S}_{++}^d \times \mathcal{S}_{++}^d$. Therefore,
\[
\|\phi_b(X) - \phi_b(Y)\|_{\mathbb{H}}^2 = \kappa_b(X,X) + \kappa_b(Y,Y) - 2\kappa_b(X,Y) \longrightarrow 0 \qquad \text{as} ~~ Y \to X,
\]
so $X \mapsto \phi_b(X)$ is continuous from $\mathcal{S}_{++}^d$ into $\mathbb{H}$. In particular, $\phi_b(\mathfrak{X})$ is Borel measurable. Also, by Lemma~\ref{lem:kappa.diagonal.p.moment} and the finiteness of $V_{1,1}$ assumed in \eqref{eq:fixed.b.moment},
\[
\EE[\|\phi_b(\mathfrak{X})\|_{\mathbb{H}}^2]
= \EE[\kappa_b(\mathfrak{X},\mathfrak{X})]
= A_d(b) \, V_{1,1}
< \infty.
\]
Hence, by Cauchy--Schwarz, $\EE[\|\phi_b(\mathfrak{X})\|_{\mathbb{H}}] < \infty$, so $\mu_b = \EE[\phi_b(\mathfrak{X})]$ exists as a Bochner integral in $\mathbb{H}$ (meaning $\|\mu_b\|_{\mathbb{H}} < \infty$). Moreover,
\begin{equation}\label{eq:element.H}
\EE[\|\phi_b(\mathfrak{X}) - \mu_b\|_{\mathbb{H}}^2]
\leq 2 \, \EE[\|\phi_b(\mathfrak{X})\|_{\mathbb{H}}^2] + 2 \, \|\mu_b\|_{\mathbb{H}}^2
< \infty.
\end{equation}
Set $Z \leqdef \phi_b(\mathfrak{X}) - \mu_b$. For every $g \in \mathbb{H}$, Cauchy--Schwarz and \eqref{eq:element.H} give $\EE[\|\langle Z,g\rangle_{\mathbb{H}} Z\|_{\mathbb{H}}] \leq \|g\|_{\mathbb{H}} \, \EE[\|Z\|_{\mathbb{H}}^2] < \infty$, so the Bochner expectation defining $\Sigma_b \, g$ in \eqref{eq:Sigma.b} exists. Moreover, $\|\Sigma_b \, g\|_{\mathbb{H}} \leq \EE[|\langle Z,g\rangle_{\mathbb{H}}| \, \|Z\|_{\mathbb{H}}] \leq \|g\|_{\mathbb{H}} \, \EE[\|Z\|_{\mathbb{H}}^2]$. Thus, $\Sigma_b$ is a well-defined bounded linear operator on $\mathbb{H}$.

We now establish a Hilbert-space central limit theorem for each sample mean. We first note that $\mathbb{H} = L^2(\mathcal{S}_{++}^d,\rd S)$ is a separable Hilbert space. By the central limit theorem in separable Hilbert spaces \citep[see, e.g.,][Theorem~10.5]{MR2814399}, it follows that, as $n_\ell \to \infty$,
\begin{equation}\label{eq:HCLT.each.sample}
\sqrt{n_\ell} \, (\bar\phi_{\ell} - \mu_b) \rightsquigarrow \mathcal{G}_{\ell}, \qquad \ell \in \{1,2\},
\end{equation}
where $\mathcal{G}_\ell$ is a centered Gaussian element in $\mathbb{H}$ with covariance operator $\Sigma_b:\mathbb{H} \to \mathbb{H}$. For completeness, for any $g,h \in \mathbb{H}$,
\[
\Cov\big(\langle \mathcal{G}_\ell,g\rangle_{\mathbb{H}}, \langle \mathcal{G}_\ell,h\rangle_{\mathbb{H}}\big)
= \langle \Sigma_b \, g, h\rangle_{\mathbb{H}}
= \EE\big[\langle \phi_b(\mathfrak{X}) - \mu_b,g\rangle_{\mathbb{H}} \langle \phi_b(\mathfrak{X}) - \mu_b,h\rangle_{\mathbb{H}}\big].
\]
Since the two samples are independent for every $n$, the pair $(\sqrt{n_1} \, (\bar\phi_1 - \mu_b), \, \sqrt{n_2} \, (\bar\phi_2 - \mu_b))$ has independent coordinates for every $n$. Therefore, together with \eqref{eq:HCLT.each.sample}, the joint law converges in $\mathbb{H} \times \mathbb{H}$ to the product law:
\begin{equation}\label{eq:joint.convergence}
\big(\sqrt{n_1} \, (\bar\phi_1 - \mu_b), \, \sqrt{n_2} \, (\bar\phi_2 - \mu_b)\big) \rightsquigarrow (\mathcal{G}_1,\mathcal{G}_2),
\end{equation}
where $\mathcal{G}_1$ and $\mathcal{G}_2$ are independent centered Gaussian elements in $\mathbb{H}$ with covariance operator $\Sigma_b$.

We now derive a central limit theorem for $\Delta_n$. By adding/subtracting $\mu_b$ in the definition of $\Delta_n$ in \eqref{eq:T.as.norm},
\[
\Delta_n = \sqrt{n_{\mathrm{eff}}} \, (\bar\phi_1 - \mu_b) - \sqrt{n_{\mathrm{eff}}} \, (\bar\phi_2 - \mu_b).
\]
Rewrite the coefficients as
\[
\sqrt{n_{\mathrm{eff}}} \, (\bar\phi_1 - \mu_b) = \sqrt{\frac{n_2}{n}} \sqrt{n_1} \, (\bar\phi_1 - \mu_b), \qquad
\sqrt{n_{\mathrm{eff}}} \, (\bar\phi_2 - \mu_b) = \sqrt{\frac{n_1}{n}} \sqrt{n_2} \, (\bar\phi_2 - \mu_b).
\]
Since $n_1/n \to \varrho$ and $n_2/n \to 1 - \varrho$ by \eqref{eq:n1.n2.n}, Slutsky's theorem together with the preceding joint convergence in \eqref{eq:joint.convergence} yields
\begin{equation}\label{eq:Delta.limit}
\Delta_n \rightsquigarrow \sqrt{1 - \varrho} \, \mathcal{G}_1 - \sqrt{\varrho} \, \mathcal{G}_2 \stackrel{\mathrm{law}}{=} \mathcal{G},
\end{equation}
where $\mathcal{G}$ is a centered Gaussian element in $\mathbb{H}$ with covariance operator $(1 - \varrho)\Sigma_b + \varrho\Sigma_b = \Sigma_b$.

We then apply the continuous mapping theorem to obtain the limit of $T(n_1,n_2)$. The mapping $g \mapsto \|g\|_{\mathbb{H}}^2$ is continuous on $\mathbb{H}$. Therefore, by \eqref{eq:T.as.norm}, \eqref{eq:Delta.limit} and the continuous mapping theorem,
\begin{equation}\label{eq:T.to.normG}
T(n_1,n_2) = \|\Delta_n\|_{\mathbb{H}}^2 \rightsquigarrow \|\mathcal{G}\|_{\mathbb{H}}^2.
\end{equation}

Finally, we give a spectral representation of $\|\mathcal{G}\|_{\mathbb{H}}^2$ as a weighted $\chi^2$ series. By construction,
\[
\langle \Sigma_b g,h\rangle_{\mathbb{H}}
= \EE[\langle Z,g\rangle_{\mathbb{H}}\langle Z,h\rangle_{\mathbb{H}}]
= \langle g,\Sigma_b h\rangle_{\mathbb{H}}, \qquad g,h \in \mathbb{H},
\]
so $\Sigma_b$ is self-adjoint; also, $\langle \Sigma_b g,g\rangle_{\mathbb{H}} = \EE[\langle Z,g\rangle_{\mathbb{H}}^2] \geq 0$ for $g \in \mathbb{H}$, so $\Sigma_b$ is nonnegative definite. It remains to prove that $\Sigma_b$ is trace-class. Let $\{u_j\}_{j=1}^{\infty}$ be any orthonormal basis of $\mathbb{H}$. Then, by \eqref{eq:element.H} and Tonelli's theorem,
\[
\sum_{j=1}^{\infty} \langle \Sigma_b \, u_j, u_j\rangle_{\mathbb{H}}
= \sum_{j=1}^{\infty} \EE\big[\langle \phi_b(\mathfrak{X}) - \mu_b, u_j\rangle_{\mathbb{H}}^2\big]
= \EE\left[\sum_{j=1}^{\infty} \langle \phi_b(\mathfrak{X}) - \mu_b, u_j\rangle_{\mathbb{H}}^2\right]
= \EE\big[\|\phi_b(\mathfrak{X}) - \mu_b\|_{\mathbb{H}}^2\big] < \infty,
\]
which proves that $\Sigma_b$ is trace-class. In particular, $\Sigma_b$ is compact. By the spectral theorem, its eigenvectors associated with the nonzero eigenvalues, together with an orthonormal basis of $\ker(\Sigma_b)$ whose elements are assigned eigenvalue $0$, form an orthonormal basis $\{e_j\}_{j=1}^{\infty}$ of $\mathbb{H}$. Write $\Sigma_b \, e_j = \lambda_j e_j$, where $\lambda_j \geq 0$ and $\sum_{j=1}^{\infty} \lambda_j = \tr(\Sigma_b) < \infty$.

Now set $Y_j \leqdef \langle \mathcal{G},e_j\rangle_{\mathbb{H}}$. Since $\mathcal{G}$ is a centered Gaussian element in $\mathbb{H}$, the vector $(Y_1,\ldots,Y_m)$ is jointly Gaussian for each $m$. Moreover,
\[
\Cov(Y_j,Y_k) = \EE(Y_jY_k) = \langle \Sigma_b \, e_j, e_k\rangle_{\mathbb{H}}
= \lambda_j \, \langle e_j,e_k\rangle_{\mathbb{H}}
= \lambda_j \, \ind_{\{j = k\}}.
\]
Hence, the variables $Y_1,Y_2,\ldots$ are independent with $Y_j \sim \mathcal{N}(0,\lambda_j)$. Therefore, if $Z_1,Z_2,\ldots \smash{\stackrel{\mathrm{iid}}{\sim}} \mathcal{N}(0,1)$, then we have $(Y_1,Y_2,\ldots) \smash{\stackrel{\mathrm{law}}{=}} (\sqrt{\lambda_1}Z_1,\sqrt{\lambda_2}Z_2,\ldots)$. Using Parseval's identity,
\[
\|\mathcal{G}\|_{\mathbb{H}}^2
= \sum_{j=1}^{\infty} \langle \mathcal{G},e_j\rangle_{\mathbb{H}}^2
= \sum_{j=1}^{\infty} Y_j^2 \stackrel{\mathrm{law}}{=} \sum_{j=1}^{\infty} \lambda_j Z_j^2.
\]
Since $\sum_{j=1}^{\infty} \lambda_j < \infty$ and $\EE(Z_j^2)=1$, the series $\sum_{j=1}^{\infty} \lambda_j Z_j^2$ converges almost surely and in $L^1$ (by monotone convergence for nonnegative partial sums and $\smash{\EE[\sum_{j=1}^{\infty} \lambda_j Z_j^2] = \sum_{j=1}^{\infty} \lambda_j < \infty}$). Combining this with \eqref{eq:T.to.normG} yields \eqref{eq:fixed.bandwidth.limit}. This completes the proof of Theorem~\ref{thm:null.fixed.bandwidth}.

\section{Technical lemmas}\label{app:tech.lemmas}

The first lemma provides an explicit diagonal evaluation of the overlap kernel
\begin{equation}\label{eq:kappa.def.recall}
\kappa_b(X,Y) = \int_{\mathcal{S}_{++}^d} K_{\nu(b,d), \, bS}(X) \, K_{\nu(b,d), \, bS}(Y) \, \rd S, \qquad X,Y\in \mathcal{S}_{++}^d,
\end{equation}
and it yields the small-$b$ asymptotics of the diagonal moments $\EE\{\kappa_b(\mathfrak{X},\mathfrak{X})^p\}$.

\begin{lemma}\label{lem:kappa.diagonal.p.moment}\addcontentsline{toc}{subsection}{Lemma~\thelemma}
Fix $d\in \N$. Then, for every $b\in(0,\infty)$ and every $X\in \mathcal{S}_{++}^d$,
\begin{equation}\label{eq:kappa.diagonal.identity}
\kappa_b(X,X) = A_d(b) \, |X|^{-(d+1)/2},
\end{equation}
with
\begin{equation}\label{eq:Adb.def}
A_d(b) \leqdef b^{-r} \, 2^{-d/b-2r} \, \frac{\Gamma_d\left(1/b + (d+1)/2\right)}{\Gamma_d\left(1/(2b) + (d+1)/2\right)^2}.
\end{equation}
Then, as $b\downarrow 0$,
\begin{equation}\label{eq:Adb.asymptotic}
A_d(b) = c_d \, b^{-r/2}\{1 + \OO(b)\}, \qquad c_d \leqdef 2^{-d(d+2)/2} \, \pi^{-r/2}.
\end{equation}
In particular, let $p\in \N$ and let $\mathfrak{X}\sim f$ be a random matrix with density $f$ supported on $\mathcal{S}_{++}^d$. Assume the weighted integrability condition
\begin{equation}\label{eq:Mp.diag.def}
V_{1,p} \leqdef \int_{\mathcal{S}_{++}^d} f(S) \, |S|^{-p(d+1)/2} \, \rd S < \infty.
\end{equation}
Then, for every $b\in(0,\infty)$,
\begin{equation}\label{eq:Ekappa.diag.p.exact}
\EE\{\kappa_b(\mathfrak{X},\mathfrak{X})^p\} = A_d(b)^p \, V_{1,p},
\end{equation}
and, as $b\downarrow 0$,
\begin{equation}\label{eq:Ekappa.diag.p.asymptotic}
\EE\{\kappa_b(\mathfrak{X},\mathfrak{X})^p\} = c_d^{ \, p} \, b^{-pr/2} \, V_{1,p} \{1 + \OO(b)\}.
\end{equation}
\end{lemma}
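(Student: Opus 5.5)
The identity \eqref{eq:kappa.diagonal.identity} follows by specializing the closed-form overlap \eqref{eq:kappa.b.closed.form}, obtained in the proof of Proposition~\ref{prop:test.statistic}, to $b_{\ell_1}=b_{\ell_2}=b$ and $\mathfrak{X}_{i_1}^{(\ell_1)}=\mathfrak{X}_{i_2}^{(\ell_2)}=X$. With these choices, $b^{-1}X+b^{-1}X=2b^{-1}X$ and the exponent becomes $\alpha=1/b+(d+1)/2$. Since $|2b^{-1}X|^{-\alpha}=2^{-d\alpha}b^{d\alpha}|X|^{-\alpha}$ and $d\alpha=d/b+r$, combining with the numerator $|X|^{1/b}$ and the prefactor $2^{-r}b^{-d/b-2r}$ leaves $|X|^{-(d+1)/2}$. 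The powers of $b$ collapse to $b^{-r}$ and those of $2$ to $2^{-d/b-2r}$, which is exactly $A_d(b)$ in \eqref{eq:Adb.def}; I will simply track these exponents carefully. The exact moment identity \eqref{eq:Ekappa.diag.p.exact} is then immediate, because $\kappa_b(X,X)^p=A_d(b)^p|X|^{-p(d+1)/2}$ and integrating against $f$ gives $V_{1,p}$, finite by \eqref{eq:Mp.diag.def}. The asymptotic \eqref{eq:Ekappa.diag.p.asymptotic} follows from \eqref{eq:Adb.asymptotic} by raising to the $p$-th power, since $\{1+\OO(b)\}^p=1+\OO(b)$ for fixed $p$.

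The main work is \eqref{eq:Adb.asymptotic}. I will use the product formula in \eqref{eq:multivariate.gamma}: the $\pi^{d(d-1)/4}$ factors contribute $\pi^{-d(d-1)/4}$ to the ratio, and
\[
\frac{\Gamma_d(1/b+(d+1)/2)}{\Gamma_d(1/(2b)+(d+1)/2)^2}=\pi^{-d(d-1)/4}\prod_{i=1}^d\frac{\Gamma(2z+c_i)}{\Gamma(z+c_i)^2},\qquad z=\frac{1}{2b},\quad c_i=\frac{d+2-i}{2}.
\]
For each factor, Stirling's formula $\log\Gamma(w+c)=(w+c-1/2)\log w-w+\tfrac12\log(2\pi)+\OO(1/w)$ gives
\[
\frac{\Gamma(2z+c)}{\Gamma(z+c)^2}=\frac{2^{2z+c-1/2}}{\sqrt{2\pi}}\,z^{1/2-c}\{1+\OO(1/z)\}.
\]
Multiplying over $i$ requires $\sum_i c_i=d(d+3)/4=(r+d)/2$.

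This yields $2^{d/b}\,2^{(r+d)/2-d/2}(2\pi)^{-d/2}(2b)^{(r+d)/2-d/2}\{1+\OO(b)\}$. Combining with $b^{-r}2^{-d/b-2r}\pi^{-d(d-1)/4}$, the $2^{d/b}$ cancels, and the power of $b$ is $b^{-r/2}$. The power of $\pi$ is $\pi^{-d/2-d(d-1)/4}=\pi^{-r/2}$. For the power of $2$, I collect $-2r+r/2-d/2+r/2=-r-d/2=-d(d+2)/2$, which matches $c_d$. This bookkeeping of exponents, together with confirming that the Stirling errors are uniformly $\OO(b)$ over the finitely many factors, is the only real obstacle, and I expect it to be routine once the product formula is used.
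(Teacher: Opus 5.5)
Your proposal is correct and follows the paper's proof essentially step for step. You specialize the closed-form overlap \eqref{eq:kappa.b.closed.form} to the diagonal, then apply Stirling's formula factorwise through the product representation of $\Gamma_d$; the only difference is cosmetic, since you parametrize by $z = 1/(2b)$ rather than the paper's $z = 1/b$. Your bookkeeping of the powers of $b$, $2$ and $\pi$ checks out and reproduces $c_d = 2^{-d(d+2)/2}\pi^{-r/2}$.
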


\begin{proof}[Proof of Lemma~\ref{lem:kappa.diagonal.p.moment}]
By \eqref{eq:kappa.b.closed.form} in the proof of Proposition~\ref{prop:test.statistic} with $b_{\ell_1} = b_{\ell_2} = b$ and $X = Y$, the overlap kernel $\kappa_b$ on the diagonal admits the closed form
\[
\kappa_b(X,X)
= \frac{b^{-r}}{2^{r}} \frac{\Gamma_d\left(1/b + (d+1)/2\right)}{\Gamma_d\left(1/(2b) + (d+1)/2\right)^2} \frac{|X|^{1/(2b)} \, |X|^{1/(2b)}}{|2X|^{1/b + (d+1)/2}}
= A_d(b) \, |X|^{-(d+1)/2}, \qquad X\in \mathcal{S}_{++}^d,
\]
which is \eqref{eq:kappa.diagonal.identity}.

Next, we want to prove \eqref{eq:Adb.asymptotic}. Let $z \leqdef 1/b\to\infty$ as $b\downarrow 0$. Using the product representation of the multivariate gamma function (see \eqref{eq:multivariate.gamma}) in the ratio appearing in \eqref{eq:Adb.def} yields
\[
\frac{\Gamma_d\left(z + (d+1)/2\right)}{\Gamma_d\left(z/2 + (d+1)/2\right)^2}
= \pi^{-d(d-1)/4} \prod_{i=1}^d \frac{\Gamma(z + c_i)}{\Gamma(z/2 + c_i)^2}, \qquad c_i \leqdef \frac{d-i+2}{2}.
\]

We now apply Stirling's formula. For fixed $c > 0$, $\Gamma(w) = \sqrt{2\pi} \, w^{w-1/2}e^{-w}\{1 + \OO(1/w)\}$ as $w\to\infty$, so that
\[
\Gamma(z + c) = \sqrt{2\pi} \, z^{z + c-1/2}e^{-z}\{1 + \OO(1/z)\}, \qquad
\Gamma(z/2 + c) = \sqrt{2\pi} \, (z/2)^{z/2 + c-1/2}e^{-z/2}\{1 + \OO(1/z)\}, \qquad z\to\infty.
\]
Therefore,
\[
\frac{\Gamma(z + c)}{\Gamma(z/2 + c)^2}
= \frac{\sqrt{2\pi} \, z^{z + c-1/2}e^{-z}\{1 + \OO(1/z)\}}{(2\pi) \, (z/2)^{z + 2c-1}e^{-z}\{1 + \OO(1/z)\}}
= \frac{2^{z + 2c-1}}{\sqrt{2\pi}} \, z^{-c + 1/2}\{1 + \OO(1/z)\}.
\]
Multiplying over $i=1,\dots,d$, with $c = c_i$, gives
\[
\frac{\Gamma_d\left(z + (d+1)/2\right)}{\Gamma_d\left(z/2 + (d+1)/2\right)^2}
= \pi^{-d(d-1)/4} \left(\frac{2^{z}}{\sqrt{2\pi}}\right)^d 2^{\sum_{i=1}^d (2c_i-1)} z^{-\sum_{i=1}^d c_i + d/2}\{1 + \OO(1/z)\}.
\]
Compute the sums:
\[
\sum_{i=1}^d (2c_i-1) = \sum_{i=1}^d (d-i+1) = \frac{d(d+1)}{2} = r, \qquad
\sum_{i=1}^d c_i = \sum_{i=1}^d \frac{d-i+2}{2} = \frac{d(d+3)}{4},
\]
hence
\[
-\sum_{i=1}^d c_i + \frac{d}{2} = -\frac{d(d+3)}{4} + \frac{2d}{4} = -\frac{d(d+1)}{4} = -\frac{r}{2}.
\]
Also,
\[
\pi^{-d(d-1)/4}\left(\frac{1}{\sqrt{2\pi}}\right)^d = 2^{-d/2} \, \pi^{-d(d-1)/4-d/2} = 2^{-d/2} \, \pi^{-r/2}.
\]
Putting these together, we obtain
\begin{equation}\label{eq:ratio.mv.gamma.asymp}
\frac{\Gamma_d\left(z + (d+1)/2\right)}{\Gamma_d\left(z/2 + (d+1)/2\right)^2} = 2^{-d/2} \, \pi^{-r/2} \, 2^{dz} \, 2^{r} \, z^{-r/2} \{1 + \OO(1/z)\}.
\end{equation}
Substituting $z = 1/b$ in \eqref{eq:ratio.mv.gamma.asymp} yields
\begin{equation}\label{eq:ratio.mv.gamma.asymp.b}
\frac{\Gamma_d\left(1/b + (d+1)/2\right)}{\Gamma_d\left(1/(2b) + (d+1)/2\right)^2} = 2^{-d/2} \, \pi^{-r/2} \, 2^{d/b} \, 2^{r} \, b^{r/2} \{1 + \OO(b)\}.
\end{equation}

Finally, insert \eqref{eq:ratio.mv.gamma.asymp.b} into the definition \eqref{eq:Adb.def}:
\[
A_d(b)
= b^{-r} \, 2^{-d/b-2r} \left[2^{-d/2} \, \pi^{-r/2} \, 2^{d/b} \, 2^{r} \, b^{r/2} \{1 + \OO(b)\}\right]
= b^{-r/2} \, 2^{-r-d/2} \, \pi^{-r/2}\{1 + \OO(b)\}.
\]
Since $r + d/2 = d(d+2)/2$, we have $2^{-r-d/2} = 2^{-d(d+2)/2}$, which proves \eqref{eq:Adb.asymptotic}.

Finally, \eqref{eq:Ekappa.diag.p.exact} follows from \eqref{eq:kappa.diagonal.identity} by raising both sides to the power $p$ and taking expectations:
\[
\EE\{\kappa_b(\mathfrak{X},\mathfrak{X})^p\}
= A_d(b)^p \, \EE\big\{|\mathfrak{X}|^{-p(d+1)/2}\big\}
= A_d(b)^p \int_{\mathcal{S}_{++}^d} f(S) \, |S|^{-p(d+1)/2} \, \rd S
= A_d(b)^p \, V_{1,p},
\]
where the finiteness is guaranteed by \eqref{eq:Mp.diag.def}. Combining this identity with \eqref{eq:Adb.asymptotic}
yields \eqref{eq:Ekappa.diag.p.asymptotic}.
\end{proof}

The next lemma establishes the small-$b$ asymptotics of the off-diagonal moments $\EE\{\kappa_b(\mathfrak{X},\mathfrak{X}')^p\}$ (with $\smash{\mathfrak{X},\mathfrak{X}' \stackrel{\mathrm{iid}}{\sim} f}$ independent), by showing that the overlap kernel localizes in a $\sqrt{b}$-neighborhood of the diagonal $X = Y$ and yields an explicit Gaussian constant.

\begin{lemma}\label{lem:kappa.offdiag.p.moment}\addcontentsline{toc}{subsection}{Lemma~\thelemma}
Fix $d \in \N$ and let $p \in \N$. Recall that $\kappa_b$ is the overlap kernel in \eqref{eq:kappa.def.recall}. Let $\mathfrak{X},\mathfrak{X}'$ be independent random matrices with common density $f$ supported on $\mathcal{S}_{++}^d$.
Assume that $f$ is bounded and continuous on $\mathcal{S}_{++}^d$, and that the weighted $L^2$-moment
\begin{equation}\label{eq:M2p.offdiag.def}
V_{2,p} \leqdef \int_{\mathcal{S}_{++}^d} f(S)^2 \, |S|^{-(p - 1)(d + 1)/2} \, \rd S < \infty.
\end{equation}
Define the Gaussian integral
\begin{equation}\label{eq:Jdp.def}
J_{d,p} \leqdef \int_{\mathcal{S}^{d}} \exp\left\{-\frac{p}{8}\tr(U^2)\right\} \, \rd U.
\end{equation}
Then, as $b \downarrow 0$,
\begin{equation}\label{eq:Ekappa.offdiag.p.asymptotic}
\EE\{\kappa_b(\mathfrak{X},\mathfrak{X}')^p\} = c_d^{ \, p} \, J_{d,p} \, b^{-(p - 1)r/2} \, V_{2,p} \, \{1 + \oo(1)\},
\end{equation}
where $c_d = 2^{-d(d + 2)/2}\pi^{-r/2}$ as in \eqref{eq:Adb.asymptotic}. Moreover, the integral \eqref{eq:Jdp.def} evaluates explicitly as
\begin{equation}\label{eq:Jdp.closed.form}
J_{d,p}
= \left(\sqrt{\frac{8\pi}{p}}\right)^{d}\left(\sqrt{\frac{4\pi}{p}}\right)^{d(d - 1)/2}
= 2^{d(d + 2)/2} \, \pi^{r/2} \, p^{-r/2}.
\end{equation}
\end{lemma}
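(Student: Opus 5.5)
We use the closed form \eqref{eq:kappa.b.closed.form} with $b_{\ell_1}=b_{\ell_2}=b$, which gives
\[
\kappa_b(X,Y) = A_d(b)\,2^{r}\,\frac{|X|^{1/(2b)}|Y|^{1/(2b)}}{|X+Y|^{1/b+(d+1)/2}}\,2^{d/b}\cdot 2^{-r}\cdot\text{(normalisation)},
\]
and write it in a form adapted to localisation. Concretely, we have
\[
\kappa_b(X,Y)=\kappa_b(X,X)^{1/2}\kappa_b(Y,Y)^{1/2}\,\rho_b(X,Y),\qquad \rho_b(X,Y)\leqdef\left\{\frac{|X|^{1/2}|Y|^{1/2}}{|(X+Y)/2|}\right\}^{1/b+(d+1)/2}.
\]
This follows from Lemma~\ref{lem:kappa.diagonal.p.moment}, since the prefactors are the same as on the diagonal. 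By AM--GM for determinants (log-concavity), $\rho_b\le 1$, with equality only when $X=Y$. Writing $Y=X^{1/2}(I+H)X^{1/2}$, the ratio becomes $|I+H|^{1/2}/|I+H/2|$. Its logarithm is $-\tr(H^2)/8+\OO(\|H\|_F^3)$, so $\rho_b\approx\exp\{-\tr(H^2)/(8b)\}$. The target is then
\[
\EE\{\kappa_b^p\}=\iint A_d(b)^p|X|^{-p(d+1)/4}|Y|^{-p(d+1)/4}\rho_b(X,Y)^p f(X)f(Y)\,\rd X\,\rd Y.
\]

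First, change variables $Y=X^{1/2}(I+\sqrt{b}\,U)X^{1/2}$ for fixed $X$. The Jacobian is $\rd Y=b^{r/2}|X|^{(d+1)/2}\rd U$, by the congruence rule used in Proposition~\ref{prop:test.statistic}. Under this substitution $|Y|^{-p(d+1)/4}\to|X|^{-p(d+1)/4}$ and $f(Y)\to f(X)$ pointwise as $b\downarrow0$, by continuity of $f$. The integrand therefore converges pointwise to
\[
A_d(b)^p\,b^{r/2}\,|X|^{-(p-1)(d+1)/2}f(X)^2\,e^{-p\tr(U^2)/8}.
\]
Combining this with $A_d(b)^p=c_d^p b^{-pr/2}\{1+\OO(b)\}$ from \eqref{eq:Adb.asymptotic} gives the claimed leading term $c_d^pJ_{d,p}b^{-(p-1)r/2}V_{2,p}$. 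The closed form \eqref{eq:Jdp.closed.form} is a product of one-dimensional Gaussian integrals. The $d$ diagonal entries contribute $e^{-pU_{ii}^2/8}$, each integrating to $\sqrt{8\pi/p}$. The $d(d-1)/2$ off-diagonal entries appear twice in the trace and contribute $e^{-pU_{ij}^2/4}$, each integrating to $\sqrt{4\pi/p}$.

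The main obstacle is justifying dominated convergence, uniformly over the noncompact cone and near its boundary. We would first use the exact eigenvalue form: if $\lambda_k>-1/\sqrt b$ are the eigenvalues of $\sqrt b\,U$, then
\[
\rho_b^p=\prod_k g(\sqrt b\,\lambda_k)^{p(1/b+(d+1)/2)},\qquad g(t)=\frac{\sqrt{1+t}}{1+t/2}\le 1.
\]
The elementary bound $\log g(t)\le -c\,t^2/(1+|t|)$ then yields domination by $\prod_k\exp\{-c\,\lambda_k^2/(1+\sqrt b|\lambda_k|)\}$. This decays like a Gaussian for $|\lambda_k|\lesssim b^{-1/2}$ and like $\exp(-c|\lambda_k|/\sqrt b)$ beyond. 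It is strong enough to absorb the polynomial factor $|I+\sqrt b U|^{-p(d+1)/4}$. Near the degenerate region where $Y$ approaches the boundary, that factor blows up, but $g(t)^{1/b}\sim (1+t)^{1/(2b)}$ as $t\downarrow-1$ kills it for small $b$. The factor $f(Y)\le\|f\|_\infty$ is bounded. What remains outside the $U$-integral is $f(X)|X|^{-(p-1)(d+1)/2}$, not $f(X)^2$. To get an integrable envelope one therefore splits the $X$-integral into $\{X:\lambda_{\min}(X)\ge\e,\ \|X\|\le\e^{-1}\}$ and its complement. On the compact part, uniform continuity gives convergence to $V_{2,p}$ restricted to that set. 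On the complement, one bounds symmetrically, using $f(X)f(Y)\le\{f(X)^2+f(Y)^2\}/2$ and the $X\leftrightarrow Y$ symmetry of the integrand, which yields a contribution $\ll\int_{\text{complement}}f^2|S|^{-(p-1)(d+1)/2}\rd S$. This tends to $0$ as $\e\to0$ because $V_{2,p}<\infty$. Letting $b\downarrow0$ and then $\e\downarrow0$ completes the argument.
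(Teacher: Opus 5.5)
Your factorization $\kappa_b(X,Y)=\kappa_b(X,X)^{1/2}\kappa_b(Y,Y)^{1/2}\rho_b(X,Y)$ with $0<\rho_b\le 1$ is correct. Your identification of the leading term also agrees with the paper: the pointwise limit after $Y=X^{1/2}(I+\sqrt b\,U)X^{1/2}$, the asymptotics $A_d(b)^p b^{r/2}\sim c_d^p b^{-(p-1)r/2}$, and the factorized Gaussian integral. The paper splits $U$-space into four regions and uses a matrix-beta bound for the far region; replacing this by a single $b$-uniform envelope in the eigenvalues of $U$ is a legitimate alternative. Two steps, however, fail as written.

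\textbf{The tail in $X$.} Write $K_\e$ for your compact set and $q=(p-1)(d+1)/2$. On $X\in K_\e^c$, the unweighted inequality $f(X)f(Y)\le\{f(X)^2+f(Y)^2\}/2$ produces two terms. The first is harmless: $\tfrac12\int_{K_\e^c}f(X)^2\int\kappa_b(X,Y)^p\,\rd Y\,\rd X\ll b^{-(p-1)r/2}\int_{K_\e^c}f^2|S|^{-q}\,\rd S$. The second is $\tfrac12\iint_{X\in K_\e^c}\kappa_b(X,Y)^p f(Y)^2\,\rd X\,\rd Y$. Symmetry of $\kappa_b$ cannot put the restriction and the factor $f^2$ on the same variable, because the region $K_\e^c\times\mathcal S_{++}^d$ is not symmetric. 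So this term is only $\ll b^{-(p-1)r/2}V_{2,p}$, which does not vanish as $\e\to0$. You need one of two fixes:
\begin{itemize}
\item an extra localization step showing $\iint_{X\in K_\e,\,Y\notin K_\e}\kappa_b^p f(X)^2=\oo(b^{-(p-1)r/2})$ for fixed $\e$; or
\item as the paper does, Cauchy--Schwarz with compensating determinant weights.
\end{itemize}
For the second fix, let $M_{d,p}(b)\leqdef\int\kappa_b(X,Y)^p|Y|^q\,\rd Y$, which is independent of $X$ and $\ll b^{-(p-1)r/2}$. Then
\[
\int_{K_\e^c} f(X)\int\kappa_b(X,Y)^p f(Y)\,\rd Y\,\rd X\le M_{d,p}(b)\,V_{2,p}^{1/2}\Big(\int_{K_\e^c}f(S)^2|S|^{-q}\,\rd S\Big)^{1/2}.
\]
Equivalently, use AM--GM weighted as $f(X)f(Y)\le\frac{\eta}{2}f(X)^2+\frac{1}{2\eta}f(Y)^2$ with $\eta$ optimized. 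The resulting bound is a square root of the tail integral rather than linear in it, but it still tends to $0$.

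\textbf{The envelope inequality.} The bound $\log g(t)\le -c\,t^2/(1+|t|)$ is false for large $t$. Indeed, $\log g(t)=-\tfrac12\log t+\log 2+\oo(1)$ as $t\to\infty$, so there is no decay like $\exp(-c|\lambda_k|/\sqrt b)$. What is true is $\log g(t)\le -c'\log(1+t)$ for $t\ge 1/2$. This gives $g(\sqrt b\,\lambda)^{p/b}\le(1+\sqrt b\,\lambda)^{-c'p/b}$, which for each fixed $K$ is $\le C(1+|\lambda|)^{-K}$ once $b$ is small enough. Combine this with the Gaussian bound for $|\sqrt b\,\lambda|\le 1/2$ and the $(1+t)^{p/(2b)}$ behaviour near $t=-1$. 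This yields a $b$-independent envelope integrable against the Vandermonde factor $\prod_{i<j}|\lambda_i-\lambda_j|$, so dominated convergence survives, but it must be argued this way. The same corrected envelope is what supplies the bound $\int\kappa_b(X,Y)^p\,\rd Y\ll b^{-(p-1)r/2}|X|^{-q}$ needed in the tail estimate above.
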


\begin{proof}[Proof of Lemma~\ref{lem:kappa.offdiag.p.moment}]
Throughout the proof, let
\[
q \leqdef \frac{(p - 1)(d + 1)}{2}, \qquad
I_b(X) \leqdef \int_{\mathcal{S}_{++}^d}\kappa_b(X,Y)^p f(Y) \, \rd Y, \qquad X \in \mathcal{S}_{++}^d.
\]
Then
\begin{equation}\label{eq:kappa.b.I.b}
\EE\{\kappa_b(\mathfrak{X},\mathfrak{X}')^p\} = \int_{\mathcal{S}_{++}^d} f(X) \, I_b(X) \, \rd X.
\end{equation}

We start from the closed-form expression for $\kappa_b$, rewrite it in exponential form using $\Phi(X,Y)$ below, then localize the $(X,Y)$-integral in a $\sqrt{b}$-neighborhood of the diagonal $X = Y$ via a matrix change of variables, control the tails, and finally evaluate the resulting Gaussian integral.

By \eqref{eq:kappa.b.closed.form} in the proof of Proposition~\ref{prop:test.statistic} with $b_{\ell_1} = b_{\ell_2} = b$, the overlap kernel admits the closed form
\begin{equation}\label{eq:kappa.closed.form.appendix}
\kappa_b(X,Y)
= \frac{b^{-r}}{2^{r}} \frac{\Gamma_d\left(1/b + (d + 1)/2\right)}{\Gamma_d\left(1/(2b) + (d + 1)/2\right)^2} \frac{|X|^{1/(2b)} \, |Y|^{1/(2b)}}{|X + Y|^{1/b + (d + 1)/2}}, \qquad X,Y \in \mathcal{S}_{++}^d.
\end{equation}
Define
\begin{equation}\label{eq:Phi.def.appendix}
\Phi(X,Y) \leqdef \log|X + Y| - \frac{1}{2}\log|X| - \frac{1}{2}\log|Y| - d\log 2, \qquad X,Y \in \mathcal{S}_{++}^d.
\end{equation}
By concavity of the log-determinant \citep[see, e.g.,][Theorem~7.6.6]{HornJohnson2013}, $\Phi(X,Y) \geq 0$ with equality if and only if $X = Y$. Rewriting the ratio of determinants in \eqref{eq:kappa.closed.form.appendix} gives $|X|^{1/(2b)}|Y|^{1/(2b)}/|X + Y|^{1/b} = 2^{-d/b} \exp\{-\Phi(X,Y)/b\}$. Hence,
\begin{equation}\label{eq:kappa.b.Phi}
\kappa_b(X,Y) = C_d(b) \, |X + Y|^{-(d + 1)/2} \exp\left\{-\frac{1}{b}\Phi(X,Y)\right\},
\end{equation}
where
\[
C_d(b) \leqdef b^{-r} \, 2^{-r - d/b} \, \frac{\Gamma_d\left(1/b + (d + 1)/2\right)}{\Gamma_d\left(1/(2b) + (d + 1)/2\right)^2}.
\]
Using \eqref{eq:ratio.mv.gamma.asymp.b} (from the proof of Lemma~\ref{lem:kappa.diagonal.p.moment}), we obtain
\[
C_d(b) = b^{-r} \, 2^{-r - d/b}\left\{2^{-d/2}\pi^{-r/2} \, 2^{d/b} \, 2^{r} \, b^{r/2}\{1 + \OO(b)\}\right\} = 2^{-d/2}\pi^{-r/2} \, b^{-r/2} \{1 + \OO(b)\}.
\]
Set $\tilde{c}_d \leqdef 2^{-d/2}\pi^{-r/2}$. Then
\begin{equation}\label{eq:kappa.exp.form.appendix.simplified}
\kappa_b(X,Y) = \tilde{c}_d \, b^{-r/2} \, |X + Y|^{-(d + 1)/2} \exp\left\{-\frac{1}{b}\Phi(X,Y)\right\}\{1 + \OO(b)\},
\end{equation}
uniformly over $(X,Y)$ ranging in any fixed compact subset of $\mathcal{S}_{++}^d\times\mathcal{S}_{++}^d$.

Fix a compact set $K \subseteq \mathcal{S}_{++}^d$. We shall derive an asymptotic formula for $I_b(X)$ uniformly for $X \in K$. For $b > 0$, let
\[
\mathcal{D}_b \leqdef \{U \in \mathcal{S}^d : I + \sqrt{b} \, U \in \mathcal{S}_{++}^d\}.
\]
For every $X \in K$, parameterize $Y \in \mathcal{S}_{++}^d$ by
\begin{equation}\label{eq:Y.param.appendix}
Y = X^{1/2}(I + \sqrt{b} \, U)X^{1/2}, \qquad U \in \mathcal{D}_b.
\end{equation}
This defines a bijection from $\mathcal{D}_b$ onto $\mathcal{S}_{++}^d$. Moreover, $|Y| = |X| \, |I + \sqrt{b} \, U|$ and $|X + Y| = |X| \, |2I + \sqrt{b} \, U|$. For all sufficiently small $b$, $\{U \in \mathcal{S}^d : \|U\|_F \leq b^{-1/4}\} \subseteq \mathcal{D}_b$, because $\lambda_{\min}(I + \sqrt{b} \, U) \geq 1 - b^{1/4} > 0$ on this set.
A Taylor expansion of $\log|\cdot|$ at the identity yields, uniformly for $\|U\|_F \leq b^{-1/4}$,
\[
\begin{aligned}
\log|I + \sqrt{b} \, U|
&= \sqrt{b} \, \tr(U) - \frac{b}{2}\tr(U^2) + \OO(b^{3/2}\|U\|_F^3), \\
\log|2I + \sqrt{b} \, U|
&= d\log 2 + \frac{\sqrt{b}}{2}\tr(U) - \frac{b}{8}\tr(U^2) + \OO(b^{3/2}\|U\|_F^3).
\end{aligned}
\]
Subtracting as in \eqref{eq:Phi.def.appendix} shows that, uniformly for $\|U\|_F \leq b^{-1/4}$,
\begin{equation}\label{eq:Phi.expansion.appendix}
\Phi(X,Y) = \frac{b}{8}\tr(U^2) + \OO(b^{3/2}\|U\|_F^3).
\end{equation}
Also,
\begin{equation}\label{eq:det.factor.p.appendix}
|X + Y|^{-p(d + 1)/2}
= |X|^{-p(d + 1)/2} \, |2I + \sqrt{b} \, U|^{-p(d + 1)/2}
= 2^{-pr} \, |X|^{-p(d + 1)/2} \{1 + \OO(\sqrt{b}\|U\|_F)\},
\end{equation}
uniformly for $\|U\|_F \leq b^{-1/4}$.
Combining \eqref{eq:kappa.exp.form.appendix.simplified}, \eqref{eq:Phi.expansion.appendix}, and \eqref{eq:det.factor.p.appendix} gives the local approximation: for any fixed $M > 0$,
\begin{equation}\label{eq:kappa.p.local.appendix}
\kappa_b\big(X, \, X^{1/2}(I + \sqrt{b} \, U)X^{1/2}\big)^p
= c_d^{ \, p} \, b^{-pr/2} \, |X|^{-p(d + 1)/2} \exp\left\{-\frac{p}{8}\tr(U^2)\right\} \{1 + \oo(1)\},
\end{equation}
uniformly over $X \in K$ and $\|U\|_F \leq M$, where the $\oo(1)$ depends on $M$, and we used $\tilde{c}_d^{ \, p} \, 2^{-pr} = c_d^{ \, p}$.

Fix $M > 0$ and let $\delta \in (0,1)$. For all sufficiently small $b$, the first three sets below are contained in $\mathcal{D}_b$, since $\lambda_{\min}(I + \sqrt{b} \, U) \geq 1 - \sqrt{b}\|U\|_F > 0$ whenever $\sqrt{b}\|U\|_F \leq \delta$. We therefore decompose $\mathcal{D}_b$ into the regions
\[
\|U\|_F \leq M, \qquad M < \|U\|_F \leq b^{-1/4}, \qquad b^{-1/4} < \|U\|_F \leq \delta/\sqrt{b}, \qquad \mathcal{D}_b \cap \{\|U\|_F > \delta/\sqrt{b}\},
\]
where $\delta$ will be chosen small enough below.

On the main region $\|U\|_F \leq M$, continuity of $f$ and the compactness of $K$ imply
\begin{equation}\label{eq:continuity.compact}
f\big(X^{1/2}(I + \sqrt{b} \, U)X^{1/2}\big) = f(X) + \oo(1),
\end{equation}
uniformly for $X \in K$ and $\|U\|_F \leq M$. The Jacobian of \eqref{eq:Y.param.appendix} is
\begin{equation}\label{eq:Jacobian.appendix}
\rd Y = b^{r/2} \, |X|^{(d + 1)/2} \, \rd U;
\end{equation}
see, e.g., \citet[Theorem~2.1.6]{Muirhead1982}. Using \eqref{eq:kappa.p.local.appendix}, \eqref{eq:continuity.compact}, and \eqref{eq:Jacobian.appendix} yields
\[
\int_{\|U\|_F\leq M}\kappa_b(X,Y)^p f(Y) \, \rd Y = c_d^{ \, p} \, b^{-(p - 1)r/2} \, |X|^{-q}\{f(X)J_{d,p}(M) + \oo(1)\},
\]
uniformly for $X \in K$, where
\[
J_{d,p}(M) \leqdef \int_{\|U\|_F\leq M} \exp\left\{-\frac{p}{8}\tr(U^2)\right\} \, \rd U.
\]

For the intermediate region $M < \|U\|_F \leq b^{-1/4}$, \eqref{eq:Phi.expansion.appendix} implies that for $b$ small enough (depending on $M$),
\begin{equation}\label{eq:exponent.lower.bound}
\frac{\Phi(X,Y)}{b} \geq \frac{1}{16}\tr(U^2), \qquad \text{on } \{M < \|U\|_F \leq b^{-1/4}\},
\end{equation}
uniformly for $X \in K$. Moreover, on the region $\{\|U\|_F \leq b^{-1/4}\}$, we have $\sqrt{b}\|U\|_F \leq b^{1/4}$, so $|2I + \sqrt{b} \, U|^{-p(d + 1)/2}$ is bounded above and below by positive constants for all sufficiently small $b$. Using \eqref{eq:kappa.exp.form.appendix.simplified}, \eqref{eq:exponent.lower.bound}, the boundedness of $f$, and \eqref{eq:Jacobian.appendix}, we obtain
\[
\sup_{X\in K} \int_{M < \|U\|_F\leq b^{-1/4}}\kappa_b(X,Y)^p f(Y) \, \rd Y
\ll_K b^{-(p - 1)r/2} \int_{\|U\|_F > M} \exp\left\{-\frac{p}{16}\tr(U^2)\right\} \rd U.
\]
The Gaussian tail on the right tends to $0$ as $M \to \infty$.

Next, choose $\delta \in (0,1)$ small enough that the Taylor expansion \eqref{eq:Phi.expansion.appendix} remains valid whenever $\sqrt{b} \, \|U\|_F \leq \delta$, and so that the remainder term there is bounded by $(b/16)\tr(U^2)$. Then, on the region $b^{-1/4} < \|U\|_F \leq \delta/\sqrt{b}$, we still have $\Phi(X,Y)/b \geq \tr(U^2)/16$, uniformly for $X \in K$, and therefore
\[
\sup_{X\in K} \int_{b^{-1/4} < \|U\|_F\leq \delta/\sqrt{b}}\kappa_b(X,Y)^p f(Y) \, \rd Y
\ll_K b^{-(p - 1)r/2} \int_{\|U\|_F > b^{-1/4}} \exp\left\{-\frac{p}{16}\tr(U^2)\right\} \rd U
= \oo\big(b^{-(p - 1)r/2}\big).
\]

It remains to handle the genuinely far region $\mathcal{D}_b \cap \{\|U\|_F > \delta/\sqrt{b}\}$. For this, define
\[
\Psi(T) \leqdef \Phi(I,T)
= \log|I + T| - \frac12\log|T| - d\log 2, \qquad T \in \mathcal{S}_{++}^d.
\]
If $\lambda_1(T),\ldots,\lambda_d(T)$ are the eigenvalues of $T$, then
\[
\Psi(T) = \sum_{j=1}^d \log\left(\frac{1 + \lambda_j(T)}{2\sqrt{\lambda_j(T)}}\right).
\]
Hence $\Psi(T) \geq 0$, with equality only at $T = I$, and moreover $\Psi(T) \to \infty$ as $T$ approaches the boundary of $\mathcal{S}_{++}^d$ or $\|T\|_F \to \infty$. Therefore
\[
m_\delta \leqdef \inf_{\|T - I\|_F\geq \delta}\Psi(T) > 0.
\]
Now write $Y = X^{1/2}TX^{1/2}$, so that $\Phi(X,Y) = \Psi(T)$ and $\rd Y = |X|^{(d + 1)/2} \rd T$. On $\mathcal{D}_b \cap \{\|U\|_F > \delta/\sqrt{b}\}$, equivalently on $\{T \in \mathcal{S}_{++}^d : \|T - I\|_F > \delta\}$, we have $\Psi(T) \geq m_\delta$. Hence, using \eqref{eq:kappa.b.Phi},
\[
\int_{\mathcal{D}_b\cap\{\|U\|_F > \delta/\sqrt{b}\}}\kappa_b(X,Y)^p f(Y) \, \rd Y
\leq \|f\|_\infty \, C_d(b)^p \, e^{-p m_\delta/(2b)} \, |X|^{-q} \int_{\mathcal{S}_{++}^d}|I + T|^{-p(d + 1)/2}e^{-p\Psi(T)/(2b)} \, \rd T.
\]
Using $e^{-p\Psi(T)/(2b)} = 2^{pd/(2b)}|T|^{p/(4b)}|I + T|^{-p/(2b)}$, the last integral equals
\[
2^{pd/(2b)} \int_{\mathcal{S}_{++}^d} |T|^{p/(4b)}|I + T|^{-p/(2b) - p(d + 1)/2} \, \rd T.
\]
For all sufficiently small $b$, the matrix beta integral \citep[see, e.g.,][\S35.3(ii)]{Richards2010}, applied with $a = p/(4b) + (d + 1)/2$ and $c = p/(4b) + q$, shows that this is
\[
2^{pd/(2b)} \frac{\Gamma_d\left(p/(4b) + (d + 1)/2\right) \Gamma_d\left(p/(4b) + q\right)}{\Gamma_d\left(p/(2b) + p(d + 1)/2\right)},
\]
where both beta parameters exceed $(d - 1)/2$. This expression is $\OO(b^{r/2})$ by Stirling's formula. Since $C_d(b)^p = \OO(b^{-pr/2})$ and $e^{-p m_\delta/(2b)} \to 0$ as $b \downarrow 0$, it follows that
\[
\sup_{X\in K} \int_{\mathcal{D}_b\cap\{\|U\|_F > \delta/\sqrt{b}\}}\kappa_b(X,Y)^p f(Y) \, \rd Y = \oo\big(b^{-(p - 1)r/2}\big).
\]

Combining the four regions, letting $b \downarrow 0$ first and then $M \to \infty$, we obtain
\[
\sup_{X\in K} \left|I_b(X) - c_d^{ \, p} b^{-(p - 1)r/2} f(X) |X|^{-q} J_{d,p}\right| = \oo\big(b^{-(p - 1)r/2}\big).
\]

We now integrate over $X$. Let
\[
K_m \leqdef \{X \in \mathcal{S}_{++}^d:\ \lambda_{\min}(X) \geq m^{-1},\ \lambda_{\max}(X) \leq m\}, \qquad m \in \N.
\]
Then $K_m \subseteq \mathcal{S}_{++}^d$ and $K_m \uparrow \mathcal{S}_{++}^d$. For each fixed $m$, the preceding uniform estimate on $K_m$ gives
\begin{equation}\label{eq:convergence.K.m}
b^{(p - 1)r/2}\int_{K_m} f(X)I_b(X) \, \rd X \longrightarrow c_d^{ \, p}J_{d,p}\int_{K_m} f(X)^2|X|^{-q} \, \rd X.
\end{equation}

It remains to control the contribution of $K_m^c$. For this, define
\begin{equation}\label{eq:M.d.p.b}
M_{d,p}(b) \leqdef \int_{\mathcal{S}_{++}^d}\kappa_b(X,Y)^p |Y|^q \, \rd Y.
\end{equation}
Using the closed form expression of the overlap kernel in \eqref{eq:kappa.closed.form.appendix}, the congruence change of variables $Y = X^{1/2}TX^{1/2}$, with $\rd Y = |X|^{(d + 1)/2} \rd T$, and the matrix beta integral \citep[see, e.g.,][\S35.3(ii)]{Richards2010}, this quantity is finite and independent of $X$ for all sufficiently small $b$. The beta parameters are $a = p/(2b) + p(d + 1)/2$ and $c = p/(2b)$, which exceed $(d - 1)/2$ for all sufficiently small $b$, and
\begin{equation}\label{eq:M.d.p.b.OG}
M_{d,p}(b)
= \left\{\frac{b^{-r}}{2^{r}} \frac{\Gamma_d\left(1/b + (d + 1)/2\right)}{\Gamma_d\left(1/(2b) + (d + 1)/2\right)^2}\right\}^p \frac{\Gamma_d\left(p/(2b) + p(d + 1)/2\right) \Gamma_d\left(p/(2b)\right)}{\Gamma_d\left(p/b + p(d + 1)/2\right)}.
\end{equation}
From \eqref{eq:ratio.mv.gamma.asymp.b}, the term inside the braces is $\OO(b^{-r/2} 2^{d/b})$ as $b \downarrow 0$, so the first factor is $\OO(b^{-pr/2} 2^{pd/b})$. For the second factor, applying Stirling's formula to the product representation of $\Gamma_d$ yields
\[
\Gamma_d(z + c)
\asymp \prod_{i=1}^d \left( \sqrt{2\pi} \, z^{z + c - i/2} e^{-z} \right)
\asymp z^{dz + dc - r/2} e^{-dz}, \qquad z \to \infty.
\]
Taking $z = p/b$, the second factor in \eqref{eq:M.d.p.b.OG} behaves asymptotically as
\[
\frac{(z/2)^{dz/2 + dp(d + 1)/2 - r/2} e^{-dz/2} \cdot (z/2)^{dz/2 - r/2} e^{-dz/2}}{z^{dz + dp(d + 1)/2 - r/2} e^{-dz}}
\asymp z^{-r/2} 2^{-dz}
= p^{-r/2} b^{r/2} 2^{-pd/b}
\asymp b^{r/2} 2^{-pd/b}.
\]
The exponential terms $2^{pd/b}$ and $2^{-pd/b}$ cancel out perfectly, leaving
\begin{equation}\label{eq:M.d.p.b.bound}
M_{d,p}(b) \ll b^{-pr/2} 2^{pd/b} \times b^{r/2} 2^{-pd/b} = b^{-(p - 1)r/2}, \qquad b \downarrow 0.
\end{equation}

Now, by Cauchy--Schwarz,
\[
I_b(X)^2
= \left(\int \kappa_b(X,Y)^p f(Y) \, \rd Y\right)^2
\leq \left(\int \kappa_b(X,Y)^p |Y|^q \, \rd Y\right) \left(\int \kappa_b(X,Y)^p f(Y)^2|Y|^{-q} \, \rd Y\right).
\]
Thus
\[
I_b(X)^2 \leq M_{d,p}(b)\int_{\mathcal{S}_{++}^d}\kappa_b(X,Y)^p f(Y)^2|Y|^{-q} \, \rd Y.
\]
Multiply this last expression by $|X|^q$ and integrate over $X \in \mathcal{S}_{++}^d$. Using Tonelli's theorem, and the symmetry $\kappa_b(X,Y) = \kappa_b(Y,X)$, we get
\[
\int_{\mathcal{S}_{++}^d} I_b(X)^2 |X|^q \, \rd X
\leq M_{d,p}(b)^2 \int_{\mathcal{S}_{++}^d} f(Y)^2|Y|^{-q} \, \rd Y
= M_{d,p}(b)^2 \, V_{2,p}.
\]
Therefore, again by Cauchy--Schwarz,
\[
\int_{K_m^c} f(X)I_b(X) \, \rd X
\leq \left(\int_{K_m^c} f(X)^2|X|^{-q} \, \rd X\right)^{1/2} \left(\int_{\mathcal{S}_{++}^d} I_b(X)^2 |X|^q \, \rd X\right)^{1/2}
\leq M_{d,p}(b) \, V_{2,p}^{1/2} \left(\int_{K_m^c} f(X)^2 |X|^{-q} \, \rd X\right)^{1/2}.
\]
Since $b^{(p - 1)r/2}M_{d,p}(b) \ll 1$ by \eqref{eq:M.d.p.b.bound}, there exists $b_0 > 0$ such that
\begin{equation}\label{eq:tail.bound.K.m.c}
\sup_{0 < b < b_0} b^{(p - 1)r/2}\int_{K_m^c} f(X)I_b(X) \, \rd X
\ll V_{2,p}^{1/2} \left(\int_{K_m^c} f(X)^2|X|^{-q} \, \rd X\right)^{1/2},
\end{equation}
and the right-hand side tends to $0$ as $m \to \infty$ by \eqref{eq:M2p.offdiag.def}.

Combining the convergence on $K_m$ in \eqref{eq:convergence.K.m} with the tail bound on $K_m^c$ in \eqref{eq:tail.bound.K.m.c}, and then letting $m \to \infty$, it follows from \eqref{eq:kappa.b.I.b} that
\[
b^{(p - 1)r/2} \, \EE\{\kappa_b(\mathfrak{X},\mathfrak{X}')^p\}
\longrightarrow c_d^{ \, p}J_{d,p}\int_{\mathcal{S}_{++}^d} f(X)^2|X|^{-q} \, \rd X
= c_d^{ \, p}J_{d,p}V_{2,p},
\]
which is exactly \eqref{eq:Ekappa.offdiag.p.asymptotic}.

It remains to compute $J_{d,p}$. Since for $U \in \mathcal{S}^d$, $\tr(U^2) = \sum_{i=1}^d U_{ii}^2 + 2\sum_{1\leq i < j\leq d}U_{ij}^2$, the integral in \eqref{eq:Jdp.def} factorizes:
\[
J_{d,p}
= \left(\prod_{i=1}^d \int_{-\infty}^{\infty}e^{-(p/8)u^2}\rd u\right) \left(\prod_{1\leq i < j\leq d} \int_{-\infty}^{\infty}e^{-(p/4)u^2}\rd u\right)
= \left(\sqrt{\frac{8\pi}{p}}\right)^{d}\left(\sqrt{\frac{4\pi}{p}}\right)^{d(d - 1)/2},
\]
which is \eqref{eq:Jdp.closed.form}.
\end{proof}

The third lemma controls the $p$th absolute moments of the Hoeffding-centered kernel $h_b$ on and off the diagonal in terms of the corresponding moments of $\kappa_b$, showing that centering does not increase the order of magnitude. Recall the overlap kernel $\kappa_b$ from \eqref{eq:kappa.def.recall}, and recall $m_b(X) = \EE\{\kappa_b(X,\mathfrak{X}')\}$, $m_{b,0} = \EE\{\kappa_b(\mathfrak{X},\mathfrak{X}')\}$, and the Hoeffding-centered kernel
\begin{equation}\label{eq:hb.def.appendix}
h_b(X,Y) = \kappa_b(X,Y) - m_b(X) - m_b(Y) + m_{b,0}, \qquad X,Y \in \mathcal{S}_{++}^d.
\end{equation}

\begin{lemma}\label{lem:h.centered.p.moment.control}\addcontentsline{toc}{subsection}{Lemma~\thelemma}
Fix $d \in \N$ and let $p \in \N$. Let $\mathfrak{X},\mathfrak{X}'$ be iid random matrices with common density $f$ supported on $\mathcal{S}_{++}^d$. Assume that $f$ is bounded and continuous on $\mathcal{S}_{++}^d$, and that the integrability conditions \eqref{eq:Mp.diag.def} and \eqref{eq:M2p.offdiag.def} hold, i.e.,
\begin{equation}\label{eq:finite.asump}
V_{1,p} = \int_{\mathcal{S}_{++}^d} f(S) \, |S|^{-p(d + 1)/2} \, \rd S < \infty, \qquad
V_{2,p} = \int_{\mathcal{S}_{++}^d} f(S)^2 \, |S|^{-(p - 1)(d + 1)/2} \, \rd S < \infty,
\end{equation}
so that Lemmas~\ref{lem:kappa.diagonal.p.moment} and \ref{lem:kappa.offdiag.p.moment} apply for this~$p$. Then, as $b \downarrow 0$,
\begin{equation}\label{eq:hb.diag.p.order}
\EE\big\{|h_b(\mathfrak{X},\mathfrak{X})|^p\big\} \ll b^{-pr/2}, \qquad
\EE\big\{|h_b(\mathfrak{X},\mathfrak{X}')|^p\big\} \ll b^{-(p - 1)r/2}.
\end{equation}
\end{lemma}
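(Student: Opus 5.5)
The plan is to compare $h_b$ with $\kappa_b$ through the elementary inequality $|x_1+\dots+x_4|^p \le 4^{p-1}\sum_k |x_k|^p$. With \eqref{eq:hb.def.appendix}, this gives
\[
|h_b(X,Y)|^p \le 4^{p-1}\bigl\{\kappa_b(X,Y)^p + m_b(X)^p + m_b(Y)^p + m_{b,0}^p\bigr\}.
\]
Everything then reduces to showing that the three centering terms are uniformly bounded for small $b$, and to invoking Lemmas~\ref{lem:kappa.diagonal.p.moment} and \ref{lem:kappa.offdiag.p.moment} for the $\kappa_b^p$ term.

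The key step is a uniform bound $\sup_{X\in\mathcal{S}_{++}^d} m_b(X)\ll 1$ for small $b$. Boundedness of $f$ gives
\[
m_b(X)=\int \kappa_b(X,Y)f(Y)\,\rd Y \le \|f\|_\infty \int_{\mathcal{S}_{++}^d}\kappa_b(X,Y)\,\rd Y .
\]
The last integral can be computed exactly from the closed form \eqref{eq:kappa.closed.form.appendix}. Substitute $Y=X^{1/2}TX^{1/2}$, so that $\rd Y=|X|^{(d+1)/2}\rd T$. The $|X|$ powers cancel exactly: the exponents are $1/b$ from the numerator, $-(1/b+(d+1)/2)$ from $|X+Y|$, and $+(d+1)/2$ from the Jacobian. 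What remains is the integral $\int |T|^{1/(2b)}|I+T|^{-1/b-(d+1)/2}\,\rd T$. This is a matrix beta integral, as used for $M_{d,p}(b)$ with $p=1$, $q=0$. Its beta parameters are $a=1/(2b)+(d+1)/2$ and $c=1/(2b)$, both admissible for small $b$.

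By \eqref{eq:M.d.p.b.bound} with $p=1$, this quantity is $\OO(b^{0})=\OO(1)$, and it does not depend on $X$. Alternatively, one can observe directly that $\int\kappa_b(X,Y)\,\rd Y=\int K_{\nu,bS}(X)\,\rd S$ and compute it via the inversion argument of Proposition~\ref{prop:test.statistic}; it is a ratio of gamma functions tending to a constant. Either way, $\sup_X m_b(X)\ll 1$, and hence $0\le m_{b,0}=\EE\{m_b(\mathfrak{X})\}\ll 1$. This uniform bound is the only genuinely new ingredient. I expect it to be the main (though mild) obstacle: one must check that the Stirling cancellation of the $2^{\pm d/b}$ factors leaves exactly order one, which is precisely \eqref{eq:M.d.p.b.bound} at $p=1$.

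With the uniform bound in hand, the two moment claims follow. In the off-diagonal case,
\[
\EE|h_b(\mathfrak{X},\mathfrak{X}')|^p \ll \EE\{\kappa_b(\mathfrak{X},\mathfrak{X}')^p\}+1 .
\]
By Lemma~\ref{lem:kappa.offdiag.p.moment}, which uses $V_{2,p}<\infty$, the first term is $\asymp b^{-(p-1)r/2}$. Since $b^{-(p-1)r/2}\gg 1$ (for $p=1$ both sides are $\OO(1)$), the bound $\EE|h_b(\mathfrak{X},\mathfrak{X}')|^p\ll b^{-(p-1)r/2}$ follows. In the diagonal case,
\[
\EE|h_b(\mathfrak{X},\mathfrak{X})|^p\ll \EE\{\kappa_b(\mathfrak{X},\mathfrak{X})^p\}+1 = A_d(b)^pV_{1,p}+1 .
\]
The equality is the exact identity \eqref{eq:Ekappa.diag.p.exact}, which uses $V_{1,p}<\infty$. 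By \eqref{eq:Adb.asymptotic}, this is $\ll b^{-pr/2}$, again because $b^{-pr/2}\gg1$. This gives both bounds in \eqref{eq:hb.diag.p.order}.
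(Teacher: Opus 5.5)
Your proof is correct, but it controls the centering terms differently from the paper. The paper's proof of this lemma never bounds $m_b$ pointwise. After the same elementary inequality (with $k=3$ on the diagonal and $k=4$ off it), it uses conditional Jensen: $\EE\{m_b(\mathfrak{X})^p\}\leq\EE\{\kappa_b(\mathfrak{X},\mathfrak{X}')^p\}$ and $m_{b,0}^p\leq\EE\{\kappa_b(\mathfrak{X},\mathfrak{X}')^p\}$. All centering contributions are thus absorbed into the off-diagonal moment $\ll b^{-(p-1)r/2}$ from Lemma~\ref{lem:kappa.offdiag.p.moment}. No integral of $\kappa_b$ has to be evaluated, and boundedness of $f$ enters only through that lemma.

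You instead prove $\sup_{X} m_b(X)\leq \|f\|_\infty (2b)^{-r}\Gamma_d(1/(2b))/\Gamma_d(1/(2b)+(d+1)/2) = \|f\|_\infty\{1+\OO(b)\}$ via the matrix beta integral. Your specialization of \eqref{eq:M.d.p.b.OG}--\eqref{eq:M.d.p.b.bound} to $p=1$ is legitimate, and it reproduces exactly \eqref{eq:int.kappa.dy}, which the paper derives only later, in Lemma~\ref{lem:h.cycle.fourth.moment}. This costs an explicit computation, but it gives the stronger pointwise statement $|h_b(X,Y)-\kappa_b(X,Y)|\ll 1$ uniformly in $(X,Y)$. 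So the centering is shown to be $\OO(1)$, not merely of no larger order than the main term. That uniform fact is exactly what the paper invokes separately in Steps~\ref{step:6} and~\ref{step:7} of the proof of Theorem~\ref{thm:null.shrinking.bandwidth}.
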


\begin{proof}[Proof of Lemma~\ref{lem:h.centered.p.moment.control}]
We repeatedly use the elementary inequality
\begin{equation}\label{eq:lp.sum.bound}
|a_1 + \cdots + a_k|^p \leq k^{p - 1} \sum_{i=1}^k |a_i|^p, \qquad p \geq 1,
\end{equation}
and Jensen's inequality in conditional form.

For each fixed $b \in (0,\infty)$, determinant monotonicity on $\mathcal{S}_{++}^d$ gives $|X + Y|^{1/b} \geq |X|^{1/(2b)}|Y|^{1/(2b)}$ and $|X + Y|^{(d + 1)/2} \geq |X|^{(d + 1)/2}$. Consequently, the closed-form expression for $\kappa_b$ in \eqref{eq:kappa.closed.form.appendix} implies $\kappa_b(X,Y)^p \ll_b |X|^{-p(d + 1)/2}$ for $X,Y \in \mathcal{S}_{++}^d$. Hence $\EE\{\kappa_b(\mathfrak{X},\mathfrak{X}')^p\} < \infty$ by the finiteness of $V_{1,p}$ in \eqref{eq:finite.asump}, so the conditional expectations below are well defined.

We first control the centering terms. Given that $m_b(\mathfrak{X}) = \EE\{\kappa_b(\mathfrak{X},\mathfrak{X}')\mid \mathfrak{X}\}$, Jensen's inequality yields
\begin{equation}\label{eq:mb.p.bound.appendix}
\EE\{|m_b(\mathfrak{X})|^p\}
= \EE\Big[\big|\EE\{\kappa_b(\mathfrak{X},\mathfrak{X}')\mid \mathfrak{X}\}\big|^p\Big]
\leq \EE\Big[\EE\{|\kappa_b(\mathfrak{X},\mathfrak{X}')|^p\mid \mathfrak{X}\}\Big]
= \EE\{\kappa_b(\mathfrak{X},\mathfrak{X}')^p\}.
\end{equation}
Also, since $m_{b,0} = \EE\{\kappa_b(\mathfrak{X},\mathfrak{X}')\}$ is constant,
\begin{equation}\label{eq:mb1.p.bound.appendix}
|m_{b,0}|^p
= \big|\EE\{\kappa_b(\mathfrak{X},\mathfrak{X}')\}\big|^p
\leq \EE\{\kappa_b(\mathfrak{X},\mathfrak{X}')^p\},
\end{equation}
by Jensen's inequality.

For the diagonal moment, note that $h_b(X,X) = \kappa_b(X,X) - 2 m_b(X) + m_{b,0}$. Apply \eqref{eq:lp.sum.bound} with $k = 3$:
\[
|h_b(\mathfrak{X},\mathfrak{X})|^p
\leq 3^{p - 1}\big(\kappa_b(\mathfrak{X},\mathfrak{X})^p + 2^p|m_b(\mathfrak{X})|^p + |m_{b,0}|^p\big),
\]
and use \eqref{eq:mb.p.bound.appendix}--\eqref{eq:mb1.p.bound.appendix} to obtain
\begin{equation}\label{eq:hb.diag.p.bound}
\EE\{|h_b(\mathfrak{X},\mathfrak{X})|^p\}
\leq 3^{p - 1}\big(\EE\{\kappa_b(\mathfrak{X},\mathfrak{X})^p\} + (2^p + 1) \, \EE\{\kappa_b(\mathfrak{X},\mathfrak{X}')^p\}\big).
\end{equation}

For the off-diagonal moment, use \eqref{eq:hb.def.appendix} with $(X,Y) = (\mathfrak{X},\mathfrak{X}')$ and apply \eqref{eq:lp.sum.bound} with $k = 4$:
\[
|h_b(\mathfrak{X},\mathfrak{X}')|^p
\leq 4^{p - 1}\big(\kappa_b(\mathfrak{X},\mathfrak{X}')^p + |m_b(\mathfrak{X})|^p + |m_b(\mathfrak{X}')|^p + |m_{b,0}|^p\big).
\]
Taking expectations and using \eqref{eq:mb.p.bound.appendix}--\eqref{eq:mb1.p.bound.appendix} gives
\begin{equation}\label{eq:hb.offdiag.p.bound}
\EE\{|h_b(\mathfrak{X},\mathfrak{X}')|^p\}
\leq 4^{p - 1}\big(\EE\{\kappa_b(\mathfrak{X},\mathfrak{X}')^p\} + 3 \, \EE\{\kappa_b(\mathfrak{X},\mathfrak{X}')^p\}\big)
= 4^p \, \EE\{\kappa_b(\mathfrak{X},\mathfrak{X}')^p\}.
\end{equation}

Finally, the claim \eqref{eq:hb.diag.p.order} follows by combining \eqref{eq:hb.diag.p.bound}--\eqref{eq:hb.offdiag.p.bound} with Lemma~\ref{lem:kappa.diagonal.p.moment} (using the assumption $V_{1,p} < \infty$ from \eqref{eq:finite.asump}), which gives $\EE\{\kappa_b(\mathfrak{X},\mathfrak{X})^p\} \ll b^{-pr/2}$, and Lemma~\ref{lem:kappa.offdiag.p.moment} (using the assumption $V_{2,p} < \infty$ from \eqref{eq:finite.asump}), which gives $\EE\{\kappa_b(\mathfrak{X},\mathfrak{X}')^p\} \ll b^{-(p - 1)r/2}$.
\end{proof}

The fourth lemma provides a bound on a cycle-type fourth-order product of the Hoeffding-centered kernel $h_b$.

\begin{lemma}\label{lem:h.cycle.fourth.moment}\addcontentsline{toc}{subsection}{Lemma~\thelemma}
Fix $d \in \N$. Let $\mathfrak{X},\mathfrak{X}',\mathfrak{X}'',\mathfrak{X}'''$ be iid random matrices with common density $f$ supported on $\mathcal{S}_{++}^d$. Assume that $f$ is bounded on $\mathcal{S}_{++}^d$ and that $V_{2,2} = \int_{\mathcal{S}_{++}^d} f(S)^2 \, |S|^{-(d + 1)/2} \, \rd S < \infty$. Then, as $b \downarrow 0$,
\begin{equation}\label{eq:h.cycle.bound}
\EE\left[h_b(\mathfrak{X},\mathfrak{X}') \, h_b(\mathfrak{X},\mathfrak{X}'') \, h_b(\mathfrak{X}''',\mathfrak{X}') \, h_b(\mathfrak{X}''',\mathfrak{X}'')\right]
\ll b^{-r/2}.
\end{equation}
\end{lemma}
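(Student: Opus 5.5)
The natural route is to write the cycle expectation as the trace of the fourth power of an integral operator on $L^2(f)$. For fixed $b$, define the operator $(\mathcal{H}_b g)(X) \leqdef \int h_b(X,Y) g(Y) f(Y) \, \rd Y$. By Fubini, the left-hand side of \eqref{eq:h.cycle.bound} equals
\[
\int\!\!\int \Big\{\int h_b(X,Z)\,h_b(Z,Y)\,f(Z)\,\rd Z\Big\}^2 f(X)\,f(Y)\,\rd X\,\rd Y = \|\mathcal{H}_b^2\|_{\mathrm{HS}}^2 = \tr(\mathcal{H}_b^4).
\]
This is bounded by $\|\mathcal{H}_b\|_{\mathrm{op}}^2 \, \|\mathcal{H}_b\|_{\mathrm{HS}}^2$. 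The Hilbert--Schmidt factor is exactly $\EE\{h_b(\mathfrak{X},\mathfrak{X}')^2\}$.

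\textbf{Step 1 (operator norm).} The key estimate is the row bound
\begin{equation}\label{eq:int.kappa.dy}
\sup_{X\in\mathcal{S}_{++}^d} \int_{\mathcal{S}_{++}^d} \kappa_b(X,Y)\,\rd Y = 1 .
\end{equation}
This holds by Tonelli: $\int\kappa_b(X,Y)\,\rd Y = \int K_{\nu(b,d),bS}(X)\,\{\int K_{\nu(b,d),bS}(Y)\,\rd Y\}\,\rd S = \int K_{\nu(b,d),bS}(X)\,\rd S$. The inner integral equals $1$ because $K_{\nu,bS}$ is a density in $Y$. The remaining $S$-integral can be computed exactly via $U = S^{-1}$ and the multivariate gamma identity used in the proof of Proposition~\ref{prop:test.statistic}; I expect it to equal a $b$-dependent constant that is bounded (indeed $\to 1$) as $b\downarrow 0$. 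Hence $\int \kappa_b(X,Y)\,\rd Y \ll 1$ uniformly in $X$. Consequently $\sup_X m_b(X) \le \|f\|_\infty \sup_X\int\kappa_b(X,Y)\,\rd Y \ll 1$, and also $m_{b,0}\ll 1$.

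Next, the Schur test with symmetric kernel $\kappa_b(X,Y)f(Y)$ on $L^2(f)$ gives $\|\mathcal{K}_b\|_{\mathrm{op}} \le \sup_X \int \kappa_b(X,Y) f(Y)\,\rd Y \ll \|f\|_\infty$. The remaining pieces of $h_b$ are rank-one operators built from $m_b$ and constants. Their norms are bounded by $\|m_b\|_{L^2(f)}$ and $|m_{b,0}|$, both $\ll 1$ by the bound just proved. Therefore $\|\mathcal{H}_b\|_{\mathrm{op}} \ll 1$ uniformly for small $b$. Alternatively, $\mathcal{H}_b = P\mathcal{K}_bP$ with $P$ the projection orthogonal to constants in $L^2(f)$, so $\|\mathcal{H}_b\|_{\mathrm{op}}\le\|\mathcal{K}_b\|_{\mathrm{op}}$ directly.

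\textbf{Step 2 (Hilbert--Schmidt norm).} The elementary bound from the proof of Lemma~\ref{lem:h.centered.p.moment.control} with $p=2$ gives $\EE\{h_b(\mathfrak{X},\mathfrak{X}')^2\}\le 16\,\EE\{\kappa_b(\mathfrak{X},\mathfrak{X}')^2\}$. Lemma~\ref{lem:kappa.offdiag.p.moment} with $p=2$ bounds the right side by $b^{-r/2}$. It uses only $V_{2,2}<\infty$, boundedness of $f$, and continuity. If continuity is not to be assumed, I would instead use the cruder upper bound via $M_{2,2}(b)$ and Cauchy--Schwarz from that proof, which needs only boundedness and $V_{2,2}$.

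Combining the two steps gives $\tr(\mathcal{H}_b^4)\le \|\mathcal{H}_b\|_{\mathrm{op}}^2\,\EE\{h_b(\mathfrak{X},\mathfrak{X}')^2\}\ll b^{-r/2}$.

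\textbf{Main obstacle.} The delicate point is justifying the Fubini/trace identity rigorously, since $\kappa_b$ is unbounded near the boundary. I would handle it by noting that all four-fold integrals are absolutely convergent: apply Cauchy--Schwarz to the cycle to bound it by $\EE\{h_b^2\}^2$-type quantities, which are finite for fixed $b$. The other point needing care is the exact evaluation of $\int K_{\nu(b,d),bS}(X)\,\rd S$ to confirm \eqref{eq:int.kappa.dy}.
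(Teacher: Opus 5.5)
Your proposal is correct and follows essentially the paper's proof: the cycle term equals $\|\mathcal{H}_b\mathcal{H}_b^{*}\|_{\mathrm{HS}}^2\le\|\mathcal{H}_b\|_{\mathrm{op}}^2\|\mathcal{H}_b\|_{\mathrm{HS}}^2$. The operator norm is controlled through $\sup_X\int\kappa_b(X,Y)\,\rd Y$; your Tonelli evaluation via $\int K_{\nu(b,d),bS}(X)\,\rd S$ is a tidy substitute for the paper's matrix beta integral. The Hilbert--Schmidt norm is controlled through the continuity-free bound $\EE\{\kappa_b(\mathfrak{X},\mathfrak{X}')^2\}\le M_{d,2}(b)\,V_{2,2}$, which is your fallback and is what the paper uses. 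One small correction: that row integral is not identically $1$ but equals $(2b)^{-r}\Gamma_d(1/(2b))/\Gamma_d(1/(2b)+(d+1)/2)=1+\OO(b)$. It is exactly $1$ only when $d=1$ (e.g.\ it is $(1-b^2)^{-1}$ for $d=2$), and boundedness is all the argument needs.
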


\begin{proof}[Proof of Lemma~\ref{lem:h.cycle.fourth.moment}]
Let $\mu(\rd S) \leqdef f(S) \, \rd S$ and define the integral operator $\mathcal{H}_b$ on $L^2(\mu)$ by
\[
(\mathcal{H}_b \, g)(X) \leqdef \int_{\mathcal{S}_{++}^d} h_b(X,Y) \, g(Y) \, \mu(\rd Y).
\]

We first bound $\|\mathcal{H}_b\|_{\mathrm{op}}$ uniformly in small $b$ by a Schur-type argument. Using $\kappa_b \geq 0$, $m_b \geq 0$, and $m_{b,0} \geq 0$,
\[
|h_b(X,Y)| \leq \kappa_b(X,Y) + m_b(X) + m_b(Y) + m_{b,0}.
\]
Integrating with respect to $\mu(\rd Y)$ gives
\[
\int_{\mathcal{S}_{++}^d} |h_b(X,Y)| \, \mu(\rd Y)
\leq \int_{\mathcal{S}_{++}^d}\kappa_b(X,Y) \, \mu(\rd Y) + m_b(X) + \int_{\mathcal{S}_{++}^d}m_b(Y) \, \mu(\rd Y) + m_{b,0}
= 2 m_b(X) + 2 m_{b,0},
\]
since $\int_{\mathcal{S}_{++}^d}\kappa_b(X,Y) \, \mu(\rd Y) = m_b(X)$ and $\int_{\mathcal{S}_{++}^d}m_b(Y) \, \mu(\rd Y) = \EE\{m_b(\mathfrak{X})\} = m_{b,0}$. Thus it suffices to bound $\sup_{X\in \mathcal{S}_{++}^d} m_b(X)$ and $m_{b,0}$.

By boundedness of $f$ and $\kappa_b \geq 0$,
\[
m_b(X)
= \int_{\mathcal{S}_{++}^d}\kappa_b(X,Y) \, f(Y) \, \rd Y
\leq \|f\|_{\infty} \int_{\mathcal{S}_{++}^d}\kappa_b(X,Y) \, \rd Y.
\]
Using the closed-form expression for $\kappa_b$ in \eqref{eq:kappa.closed.form.appendix} and the congruence change of variables $Y = X^{1/2}TX^{1/2}$ (whose Jacobian is $\rd Y = |X|^{(d + 1)/2} \rd T$ on $\mathcal{S}_{++}^d$), one checks that for all sufficiently small $b$, $\int_{\mathcal{S}_{++}^d}\kappa_b(X,Y) \rd Y$ is independent of $X$ and equals
\[
\int_{\mathcal{S}_{++}^d}\kappa_b(X,Y) \, \rd Y
= \frac{b^{-r}}{2^r} \, \frac{\Gamma_d(1/(2b))}{\Gamma_d\left(1/(2b) + (d + 1)/2\right)}.
\]
Indeed, after the change $Y = X^{1/2}TX^{1/2}$ the $X$-dependence cancels, and the remaining $T$-integral is a multivariate beta integral:
for $\Re(a),\Re(c) > (d - 1)/2$,
\[
\int_{\mathcal{S}_{++}^d} |T|^{a - (d + 1)/2} \, |I + T|^{-(a + c)} \, \rd T
= \frac{\Gamma_d(a)\Gamma_d(c)}{\Gamma_d(a + c)},
\]
applied with $a = 1/(2b) + (d + 1)/2$ and $c = 1/(2b)$, which is valid for all sufficiently small $b$ since then $a,c > (d - 1)/2$. Therefore,
\begin{equation}\label{eq:int.kappa.dy}
\sup_{X\in \mathcal{S}_{++}^d} m_b(X)
\leq \|f\|_{\infty} \, \frac{b^{-r}}{2^r} \, \frac{\Gamma_d(1/(2b))}{\Gamma_d\left(1/(2b) + (d + 1)/2\right)}.
\end{equation}
Using the product representation of $\Gamma_d$ and Stirling's formula (as in Lemma~\ref{lem:kappa.diagonal.p.moment}), the multivariate-gamma ratio satisfies
\[
\frac{\Gamma_d(z)}{\Gamma_d\left(z + (d + 1)/2\right)} = z^{-r} \{1 + \OO(1/z)\}, \qquad z \to \infty,
\]
so with $z = 1/(2b)$ we obtain
\[
\frac{b^{-r}}{2^r} \, \frac{\Gamma_d(1/(2b))}{\Gamma_d\left(1/(2b) + (d + 1)/2\right)}
= \frac{b^{-r}}{2^r} \, \left(\frac{1}{2b}\right)^{-r} \{1 + \OO(b)\}
= 1 + \OO(b),
\]
and in particular the right-hand side of \eqref{eq:int.kappa.dy} is bounded uniformly for sufficiently small $b$.
Since $m_{b,0} = \EE\{m_b(\mathfrak{X})\} \leq \sup_{X\in \mathcal{S}_{++}^d} m_b(X)$, we conclude that $\smash{\sup_{X\in \mathcal{S}_{++}^d} \int_{\mathcal{S}_{++}^d}|h_b(X,Y)| \, \mu(\rd Y) \ll 1}$ and $\smash{\sup_{Y\in \mathcal{S}_{++}^d} \int_{\mathcal{S}_{++}^d}|h_b(X,Y)| \, \mu(\rd X) \ll 1}$, for all sufficiently small $b$. By the Schur test,
\begin{equation}\label{eq:Schur.test}
\|\mathcal{H}_b\|_{\mathrm{op}} \ll 1.
\end{equation}

Next we show that $\mathcal{H}_b$ is Hilbert--Schmidt and that
\begin{equation}\label{eq:hb.hs.bound}
\|\mathcal{H}_b\|_{\mathrm{HS}}^2 = \EE\{h_b(\mathfrak{X},\mathfrak{X}')^2\} \ll b^{-r/2}.
\end{equation}
Using $|h_b(X,Y)|^2 \leq 4 (\kappa_b(X,Y)^2 + m_b(X)^2 + m_b(Y)^2 + m_{b,0}^2)$, we obtain
\[
\EE\{h_b(\mathfrak{X},\mathfrak{X}')^2\}
\leq 4 \, \big(\EE\{\kappa_b(\mathfrak{X},\mathfrak{X}')^2\} + 2 \, \EE\{m_b(\mathfrak{X})^2\} + m_{b,0}^2\big).
\]
Given that $m_b(\mathfrak{X}) = \EE\{\kappa_b(\mathfrak{X},\mathfrak{X}')\mid \mathfrak{X}\}$ and $m_{b,0} = \EE\{\kappa_b(\mathfrak{X},\mathfrak{X}')\}$, Jensen's inequality yields $\EE\{m_b(\mathfrak{X})^2\} \leq \EE\{\kappa_b(\mathfrak{X},\mathfrak{X}')^2\}$ and $m_{b,0}^2 \leq \EE\{\kappa_b(\mathfrak{X},\mathfrak{X}')^2\}$. Therefore
\begin{equation}\label{eq:h.by.kappa.second.moment}
\EE\{h_b(\mathfrak{X},\mathfrak{X}')^2\} \leq 16 \, \EE\{\kappa_b(\mathfrak{X},\mathfrak{X}')^2\}.
\end{equation}

Set $q = (d + 1)/2$ and recall from \eqref{eq:M.d.p.b} and \eqref{eq:M.d.p.b.bound} that $M_{d,2}(b) = \smash{\int_{\mathcal{S}_{++}^d}\kappa_b(X,Y)^2 \, |Y|^{q} \, \rd Y}$, and that $M_{d,2}(b) \ll b^{-r/2}$ as $b \downarrow 0$. Now, by Cauchy--Schwarz and the symmetry $\kappa_b(X,Y) = \kappa_b(Y,X)$, we get
\[
\begin{aligned}
\EE\{\kappa_b(\mathfrak{X},\mathfrak{X}')^2\}
&= \iint_{\mathcal{S}_{++}^d\times\mathcal{S}_{++}^d}\kappa_b(X,Y)^2 f(X)f(Y) \, \rd X \, \rd Y \\
&\leq \left(\iint \kappa_b(X,Y)^2 f(X)^2|X|^{-q}|Y|^{q} \, \rd X \, \rd Y\right)^{1/2} \left(\iint \kappa_b(X,Y)^2 f(Y)^2|Y|^{-q}|X|^{q} \, \rd X \, \rd Y\right)^{1/2} \\
&= M_{d,2}(b) \, V_{2,2}
\ll b^{-r/2}.
\end{aligned}
\]
Combining this with \eqref{eq:h.by.kappa.second.moment} proves \eqref{eq:hb.hs.bound}.

We now return to the cycle term. Before applying conditioning, we verify absolute integrability. Define $q_b(X) \leqdef \int_{\mathcal{S}_{++}^d} h_b(X,Z)^2 \, \mu(\rd Z)$. By Tonelli's theorem, conditional independence, and the Cauchy--Schwarz inequality,
\[
\begin{aligned}
&\EE\left[\left|h_b(\mathfrak{X},\mathfrak{X}') \, h_b(\mathfrak{X},\mathfrak{X}'') \, h_b(\mathfrak{X}''',\mathfrak{X}') \, h_b(\mathfrak{X}''',\mathfrak{X}'')\right|\right] \\
&\qquad= \EE\left[\left\{\int_{\mathcal{S}_{++}^d}|h_b(\mathfrak{X},Z) \, h_b(\mathfrak{X}''',Z)| \, \mu(\rd Z)\right\}^2\right]
\leq \EE\{q_b(\mathfrak{X})q_b(\mathfrak{X}''')\}
= \left[\EE\{q_b(\mathfrak{X})\}\right]^2
= \|\mathcal{H}_b\|_{\mathrm{HS}}^4
< \infty.
\end{aligned}
\]
Thus, the following conditioning and Fubini steps are justified. Since $\mathfrak{X}',\mathfrak{X}''$ are iid and independent of $(\mathfrak{X},\mathfrak{X}''')$, conditioning gives
\begin{equation}\label{eq:last.display}
\begin{aligned}
\EE\left[h_b(\mathfrak{X},\mathfrak{X}') \, h_b(\mathfrak{X},\mathfrak{X}'') \, h_b(\mathfrak{X}''',\mathfrak{X}') \, h_b(\mathfrak{X}''',\mathfrak{X}'')\right]
&= \EE\left[\left\{\int_{\mathcal{S}_{++}^d} h_b(\mathfrak{X},Z) \, h_b(\mathfrak{X}''',Z) \, \mu(\rd Z)\right\}^2\right] \\
&= \iint_{\mathcal{S}_{++}^d\times\mathcal{S}_{++}^d}
\left\{\int_{\mathcal{S}_{++}^d} h_b(X,Z) \, h_b(X',Z) \, \mu(\rd Z)\right\}^2 \mu(\rd X) \, \mu(\rd X').
\end{aligned}
\end{equation}
Because $\mathcal{H}_b$ is Hilbert--Schmidt, the operator $\mathcal{R}_b \leqdef \mathcal{H}_b\mathcal{H}_b^{\ast}$ is nonnegative definite and trace class, hence also Hilbert--Schmidt. Its kernel at $(X,X')$ is
\[
\int_{\mathcal{S}_{++}^d} h_b(X,Z) \, h_b(X',Z) \, \mu(\rd Z),
\]
so the right-hand side of \eqref{eq:last.display} equals $\|\mathcal{R}_b\|_{\mathrm{HS}}^2 = \tr(\mathcal{R}_b^2)$. Writing the eigenvalues of $\mathcal{R}_b$ as $(\lambda_j)_{j=1}^{\infty}$ yields
\[
\tr(\mathcal{R}_b^2)
= \sum_{j=1}^{\infty} \lambda_j^2 \leq \left(\sup_{j\geq 1} \lambda_j\right)\sum_{j=1}^{\infty} \lambda_j
= \|\mathcal{R}_b\|_{\mathrm{op}} \, \tr(\mathcal{R}_b).
\]
Moreover, $\|\mathcal{R}_b\|_{\mathrm{op}} = \|\mathcal{H}_b\|_{\mathrm{op}}^2$ and $\tr(\mathcal{R}_b) = \|\mathcal{H}_b\|_{\mathrm{HS}}^2$, hence
\begin{equation}\label{eq:cycle.trace.bound}
\left|\EE\left[h_b(\mathfrak{X},\mathfrak{X}') \, h_b(\mathfrak{X},\mathfrak{X}'') \, h_b(\mathfrak{X}''',\mathfrak{X}') \, h_b(\mathfrak{X}''',\mathfrak{X}'')\right]\right|
\leq \|\mathcal{H}_b\|_{\mathrm{op}}^2 \, \|\mathcal{H}_b\|_{\mathrm{HS}}^2.
\end{equation}
Inserting the bounds \eqref{eq:Schur.test} and \eqref{eq:hb.hs.bound} into \eqref{eq:cycle.trace.bound} yields \eqref{eq:h.cycle.bound}.
\end{proof}

\end{appendices}

\section*{Reproducibility}\label{app:reproducibility}
\addcontentsline{toc}{section}{Reproducibility}

The code required to implement our proposed test and generate all tables in the simulation study is publicly accessible at \href{https://github.com/FredericOuimetMcGill/WishartKDETwoSampleTest}{https://github.com/FredericOuimetMcGill/WishartKDETwoSampleTest}.

\section*{Statement of AI use}
\addcontentsline{toc}{section}{Statement of AI use}

The simulation study in Section~\ref{sec:power.study} was conducted with the help of ChatGPT 5.6 Sol. The author takes full accountability for the correctness of the $\mathsf{R}$ code, the study design and the conclusions.

\section*{Funding}
\addcontentsline{toc}{section}{Funding}

The author is supported by the Natural Sciences and Engineering Research Council of Canada (NSERC) through Discovery Grant RGPIN-2026-04471 and Discovery Launch Supplement DGECR-2026-00449.

\section*{Acknowledgments}
\addcontentsline{toc}{section}{Acknowledgments}

We thank Donald Richards (Penn State University) for providing an earlier version of the proof of Proposition~\ref{prop:test.statistic}. We are also grateful to the referee for the careful reading and constructive feedback, which helped improve the paper.

%\section*{References}
\addcontentsline{toc}{section}{References}

\small

% This line reduces the gap between references
\setlength{\bibsep}{0pt plus 0ex}

\bibliographystyle{myjmva}
\bibliography{bib}

@article {AhmadCerrito1993,
  author   = {Ahmad, I. A. and Cerrito, P. B.},
  title    = {Goodness of fit tests based on the {$L_2$}-norm of multivariate probability density functions},
  fjournal = {Journal of Nonparametric Statistics},
  journal  = {J. Nonparametr. Statist.},
  volume   = {2},
  number   = {2},
  pages    = {169--181},
  year     = {1993},
  doi      = {10.1080/10485259308832550},
  mrnumber = {1256380}
}

@article {AndersenBollerslevDieboldLabys2003,
  author   = {Andersen, T. G. and Bollerslev, T. and Diebold, F. X. and Labys, P.},
  title    = {Modeling and forecasting realized volatility},
  fjournal = {Econometrica. Journal of the Econometric Society},
  journal  = {Econometrica},
  volume   = {71},
  number   = {2},
  pages    = {579--625},
  year     = {2003},
  doi      = {10.1111/1468-0262.00418},
  mrnumber = {1958138}
}

@article {AndersonHallTitterington1994,
  author   = {Anderson, N. H. and Hall, P. and Titterington, D. M.},
  title    = {Two-sample test statistics for measuring discrepancies between two multivariate probability density functions using kernel-based density estimates},
  fjournal = {Journal of Multivariate Analysis},
  journal  = {J. Multivariate Anal.},
  volume   = {50},
  number   = {1},
  pages    = {41--54},
  year     = {1994},
  doi      = {10.1006/jmva.1994.1033},
  mrnumber = {1292607}
}

@book {Anderson2003,
  author    = {Anderson, T. W.},
  title     = {An {I}ntroduction to {M}ultivariate {S}tatistical {A}nalysis},
  edition   = {Third},
  publisher = {Wiley-Interscience},
  address   = {Hoboken, NJ},
  pages     = {xx+721},
  year      = {2003},
  isbn      = {0-471-36091-0},
  mrnumber  = {1990662}
}

@article {BabiluaNadaraya2019,
  author   = {Babilua, P. K. and Nadaraya, E. A.},
  title    = {On one homogeneity test based on the kernel-type estimators of the distribution density},
  fjournal = {Ukrainian Mathematical Journal},
  journal  = {Ukrainian Math. J.},
  volume   = {71},
  number   = {4},
  pages    = {505--518},
  year     = {2019},
  doi      = {10.1007/s11253-019-01660-5},
  mrnumber = {3940208}
}

@article {BaringhausKolbe2015,
  author   = {Baringhaus, L. and Kolbe, D.},
  title    = {Two-sample tests based on empirical {H}ankel transforms},
  fjournal = {Statistical Papers},
  journal  = {Statist. Papers},
  volume   = {56},
  number   = {3},
  pages    = {597--617},
  year     = {2015},
  doi      = {10.1007/s00362-014-0599-1},
  mrnumber = {3369421}
}

@article {BaringhausTaherizadeh2010,
  author   = {Baringhaus, L. and Taherizadeh, F.},
  title    = {Empirical {H}ankel transforms and its applications to goodness-of-fit tests},
  fjournal = {Journal of Multivariate Analysis},
  journal  = {J. Multivariate Anal.},
  volume   = {101},
  number   = {6},
  pages    = {1445--1457},
  year     = {2010},
  doi      = {10.1016/j.jmva.2009.12.002},
  mrnumber = {2609505}
}

@article {BaringhausTaherizadeh2013,
  author   = {Baringhaus, L. and Taherizadeh, F.},
  title    = {A {K-S} type test for exponentiality based on empirical {H}ankel transforms},
  fjournal = {Communications in Statistics. Theory and Methods},
  journal  = {Comm. Statist. Theory Methods},
  volume   = {42},
  number   = {20},
  pages    = {3781--3792},
  year     = {2013},
  doi      = {10.1080/03610926.2011.639003},
  mrnumber = {3170965}
}

@article {BarndorffNielsenShephard2004,
  author   = {Barndorff-Nielsen, O. E. and Shephard, N.},
  title    = {Econometric analysis of realized covariation: high frequency based covariance, regression, and correlation in financial economics},
  fjournal = {Econometrica. Journal of the Econometric Society},
  journal  = {Econometrica},
  volume   = {72},
  number   = {3},
  pages    = {885--925},
  year     = {2004},
  doi      = {10.1111/j.1468-0262.2004.00515.x},
  mrnumber = {2051439}
}

@article {BasserMattielloLeBihan1994,
  author   = {Basser, P. J. and Mattiello, J. and {Le Bihan}, D.},
  title    = {{MR} diffusion tensor spectroscopy and imaging},
  fjournal = {Biophysical Journal},
  journal  = {Biophys. J.},
  volume   = {66},
  number   = {1},
  pages    = {259--267},
  year     = {1994},
  doi      = {10.1016/S0006-3495(94)80775-1}
}

@article {BasserPierpaoli1996,
  author   = {Basser, P. J. and Pierpaoli, C.},
  title    = {Microstructural and physiological features of tissues elucidated by quantitative-diffusion-tensor {MRI}},
  fjournal = {Journal of Magnetic Resonance, Series B},
  journal  = {J. Magn. Reson. Ser. B},
  volume   = {111},
  number   = {3},
  pages    = {209--219},
  year     = {1996},
  doi      = {10.1006/jmrb.1996.0086}
}

@article {Beaulieu2002,
  author   = {Beaulieu, C.},
  title    = {The basis of anisotropic water diffusion in the nervous system -- a technical review},
  fjournal = {NMR in Biomedicine},
  journal  = {NMR Biomed.},
  volume   = {15},
  number   = {7--8},
  pages    = {435--455},
  year     = {2002},
  doi      = {10.1002/nbm.782}
}

@misc {BelzileGenestOuimetRichards2025WishartKDE,
  author       = {Belzile, L. R. and Genest, C. and Ouimet, F. and Richards, D.},
  title        = {{Wishart} kernel density estimation for strongly mixing time series on the cone of positive definite matrices},
  howpublished = {arXiv preprint arXiv:2512.08232v2},
  year         = {2026},
  doi          = {10.48550/arXiv.2512.08232}
}

@article {Chen2000,
  author   = {Chen, S. X.},
  title    = {Probability density function estimation using gamma kernels},
  fjournal = {Annals of the Institute of Statistical Mathematics},
  journal  = {Ann. Inst. Statist. Math.},
  volume   = {52},
  number   = {3},
  pages    = {471--480},
  year     = {2000},
  doi      = {10.1023/A:1004165218295},
  mrnumber = {1794247}
}

@article {ConturoEtAl1999,
  author   = {Conturo, T. E. and Lori, N. F. and Cull, T. S. and Akbudak, E. and Snyder, A. Z. and Shimony, J. S. and McKinstry, R. C. and Burton, H. and Raichle, M. E.},
  title    = {Tracking neuronal fiber pathways in the living human brain},
  fjournal = {Proceedings of the National Academy of Sciences of the United States of America},
  journal  = {Proc. Natl. Acad. Sci. USA},
  volume   = {96},
  number   = {18},
  pages    = {10422--10427},
  year     = {1999},
  doi      = {10.1073/pnas.96.18.10422}
}

@article {deJong1987,
  author   = {de Jong, P.},
  title    = {A central limit theorem for generalized quadratic forms},
  fjournal = {Probability Theory and Related Fields},
  journal  = {Probab. Theory Related Fields},
  volume   = {75},
  number   = {2},
  pages    = {261--277},
  year     = {1987},
  doi      = {10.1007/BF00354037},
  mrnumber = {885466}
}

@article {Duong2013,
  author   = {Duong, T.},
  title    = {Local significant differences from nonparametric two-sample tests},
  fjournal = {Journal of Nonparametric Statistics},
  journal  = {J. Nonparametr. Stat.},
  volume   = {25},
  number   = {3},
  pages    = {635--645},
  year     = {2013},
  doi      = {10.1080/10485252.2013.810217},
  mrnumber = {3174288}
}

@article {EllingsonEtAl2017,
  author   = {Ellingson, L. A. and Groisser, D. and Osborne, D. E. and Patrangenaru, V. and Schwartzman, A.},
  title    = {Nonparametric bootstrap of sample means of positive-definite matrices with an application to diffusion-tensor-imaging data analysis},
  fjournal = {Communications in Statistics. Simulation and Computation},
  journal  = {Comm. Statist. Simulation Comput.},
  volume   = {46},
  number   = {6},
  pages    = {4851--4879},
  year     = {2017},
  doi      = {10.1080/03610918.2015.1136413},
  mrnumber = {3672582}
}

@article {GarreauArlot2018Consistent,
  author   = {Garreau, D. and Arlot, S.},
  title    = {Consistent change-point detection with kernels},
  fjournal = {Electronic Journal of Statistics},
  journal  = {Electron. J. Stat.},
  volume   = {12},
  number   = {2},
  pages    = {4440--4486},
  year     = {2018},
  doi      = {10.1214/18-EJS1513},
  mrnumber = {3892345}
}

@article {Gindikin1964,
  author   = {Gindikin, S. G.},
  title    = {Analysis in homogeneous domains},
  fjournal = {Russian Mathematical Surveys},
  journal  = {Russian Math. Surveys},
  volume   = {19},
  number   = {4},
  pages    = {1--89},
  year     = {1964},
  doi      = {10.1070/RM1964v019n04ABEH001153},
  mrnumber = {171941}
}

@article {GourierouxJasiakSufana2009,
  author   = {Gouri{\'e}roux, C. and Jasiak, J. and Sufana, R.},
  title    = {The {W}ishart autoregressive process of multivariate stochastic volatility},
  fjournal = {Journal of Econometrics},
  journal  = {J. Econometrics},
  volume   = {150},
  number   = {2},
  pages    = {167--181},
  year     = {2009},
  doi      = {10.1016/j.jeconom.2008.12.016},
  mrnumber = {2535514}
}

@book {GuptaNagar2000,
  author    = {Gupta, A. K. and Nagar, D. K.},
  title     = {Matrix {V}ariate {D}istributions},
  volume    = {104},
  publisher = {Chapman \& Hall/CRC},
  address   = {Boca Raton, FL},
  pages     = {x+367},
  year      = {2000},
  isbn      = {1-58488-046-5},
  doi       = {10.1201/9780203749289},
  mrnumber  = {1738933}
}

@phdthesis {Hadjicosta2019PhD,
  author  = {Hadjicosta, E.},
  title   = {Integral {T}ransform {M}ethods in {G}oodness-of-{F}it {T}esting},
  school  = {The Pennsylvania State University},
  address = {University Park, PA},
  year    = {2019}
}

@article {MR4169380,
  author   = {Hadjicosta, E. and Richards, D.},
  title    = {Integral transform methods in goodness-of-fit testing, {II}: the {W}ishart distributions},
  fjournal = {Annals of the Institute of Statistical Mathematics},
  journal  = {Ann. Inst. Statist. Math.},
  volume   = {72},
  number   = {6},
  pages    = {1317--1370},
  year     = {2020},
  doi      = {10.1007/s10463-019-00737-z},
  mrnumber = {4169380}
}

@article {HlavkaHuskovaMeintanis2020Paired,
  author   = {Hl{\'a}vka, Z. and Hu{\v{s}}kov{\'a}, M. and Meintanis, S. G.},
  title    = {Change-point methods for multivariate time-series: paired vectorial observations},
  fjournal = {Statistical Papers},
  journal  = {Statist. Papers},
  volume   = {61},
  number   = {4},
  pages    = {1351--1383},
  year     = {2020},
  doi      = {10.1007/s00362-020-01175-3},
  mrnumber = {4127478}
}

@article {HofmannScholkopfSmola2008,
  author   = {Hofmann, T. and Sch{\"o}lkopf, B. and Smola, A. J.},
  title    = {Kernel methods in machine learning},
  fjournal = {The Annals of Statistics},
  journal  = {Ann. Statist.},
  volume   = {36},
  number   = {3},
  pages    = {1171--1220},
  year     = {2008},
  doi      = {10.1214/009053607000000677},
  mrnumber = {2418654}
}

@book {HornJohnson2013,
  author    = {Horn, R. A. and Johnson, C. R.},
  title     = {Matrix {A}nalysis},
  edition   = {Second},
  publisher = {Cambridge University Press},
  address   = {Cambridge},
  pages     = {xviii+643},
  year      = {2013},
  isbn      = {978-0-521-54823-6},
  doi       = {10.1017/CBO9781139020411},
  mrnumber  = {2978290}
}

@article {HuskovaMeintanis2006ECF,
  author   = {Hu{\v{s}}kov{\'a}, M. and Meintanis, S. G.},
  title    = {Change point analysis based on empirical characteristic functions},
  fjournal = {Metrika. International Journal for Theoretical and Applied Statistics},
  journal  = {Metrika},
  volume   = {63},
  number   = {2},
  pages    = {145--168},
  year     = {2006},
  doi      = {10.1007/s00184-005-0008-9},
  mrnumber = {2242537}
}

@article {HuskovaMeintanis2006Ranks,
  author   = {Hu{\v{s}}kov{\'a}, M. and Meintanis, S. G.},
  title    = {Change-point analysis based on empirical characteristic functions of ranks},
  fjournal = {Sequential Analysis. Design Methods \& Applications},
  journal  = {Sequential Anal.},
  volume   = {25},
  number   = {4},
  pages    = {421--436},
  year     = {2006},
  doi      = {10.1080/07474940600934888},
  mrnumber = {2271923}
}

@book {MR2814399,
  author    = {Ledoux, M. and Talagrand, M.},
  title     = {Probability in {B}anach {S}paces: {I}soperimetry and {P}rocesses},
  publisher = {Springer-Verlag},
  address   = {Berlin},
  pages     = {xii+480},
  year      = {2011},
  isbn      = {978-3-642-20211-7},
  doi       = {10.1007/978-3-642-20212-4},
  mrnumber  = {2814399}
}

@article {Li1996,
  author   = {Li, Q.},
  title    = {Nonparametric testing of closeness between two unknown distribution functions},
  fjournal = {Econometric Reviews},
  journal  = {Econometric Rev.},
  volume   = {15},
  number   = {3},
  pages    = {261--274},
  year     = {1996},
  doi      = {10.1080/07474939608800355},
  mrnumber = {1410879}
}

@article {Li1999,
  author   = {Li, Q.},
  title    = {Nonparametric testing the similarity of two unknown density functions: local power and bootstrap analysis},
  fjournal = {Journal of Nonparametric Statistics},
  journal  = {J. Nonparametr. Statist.},
  volume   = {11},
  number   = {1--3},
  pages    = {189--213},
  year     = {1999},
  doi      = {10.1080/10485259908832780},
  mrnumber = {1719461}
}

@article {LiMaasoumiRacine2009,
  author   = {Li, Q. and Maasoumi, E. and Racine, J. S.},
  title    = {A nonparametric test for equality of distributions with mixed categorical and continuous data},
  fjournal = {Journal of Econometrics},
  journal  = {J. Econometrics},
  volume   = {148},
  number   = {2},
  pages    = {186--200},
  year     = {2009},
  doi      = {10.1016/j.jeconom.2008.10.007},
  mrnumber = {2500656}
}

@article {LiXieDaiSong2019ScanB,
  author   = {Li, S. and Xie, Y. and Dai, H. and Song, L.},
  title    = {Scan {$B$}-statistic for kernel change-point detection},
  fjournal = {Sequential Analysis. Design Methods \& Applications},
  journal  = {Sequential Anal.},
  volume   = {38},
  number   = {4},
  pages    = {503--544},
  year     = {2019},
  doi      = {10.1080/07474946.2019.1686886},
  mrnumber = {4057156}
}

@article {Lukic2024Laplace,
  author   = {Luki{\'c}, {\v{Z}}.},
  title    = {A {L}aplace transform-based test for the equality of positive semidefinite matrix distributions},
  fjournal = {Univerzitet u Ni{\v{s}}u. Prirodno-Matemati{\v{c}}ki Fakultet. Filomat},
  journal  = {Filomat},
  volume   = {38},
  number   = {26},
  pages    = {9343--9359},
  year     = {2024},
  doi      = {10.2298/FIL2426343L},
  mrnumber = {4852661}
}

@article {LukicMilosevic2024AISM,
  author   = {Luki{\'c}, {\v{Z}}. and Milo{\v{s}}evi{\'c}, B.},
  title    = {A novel two-sample test within the space of symmetric positive definite matrix distributions and its application in finance},
  fjournal = {Annals of the Institute of Statistical Mathematics},
  journal  = {Ann. Inst. Statist. Math.},
  volume   = {76},
  number   = {5},
  pages    = {797--820},
  year     = {2024},
  doi      = {10.1007/s10463-024-00902-z},
  mrnumber = {4798673}
}

@article {LukicMilosevic2024ChangePoint,
  author   = {Luki{\'c}, {\v{Z}}. and Milo{\v{s}}evi{\'c}, B.},
  title    = {Change-point analysis for matrix data: the empirical {H}ankel transform approach},
  fjournal = {Statistical Papers},
  journal  = {Statist. Papers},
  volume   = {65},
  number   = {9},
  pages    = {5955--5980},
  year     = {2024},
  doi      = {10.1007/s00362-024-01596-4},
  mrnumber = {4827983}
}

@article {MattesonJames2014Nonparametric,
  author   = {Matteson, D. S. and James, N. A.},
  title    = {A nonparametric approach for multiple change point analysis of multivariate data},
  fjournal = {Journal of the American Statistical Association},
  journal  = {J. Amer. Statist. Assoc.},
  volume   = {109},
  number   = {505},
  pages    = {334--345},
  year     = {2014},
  doi      = {10.1080/01621459.2013.849605},
  mrnumber = {3180567}
}

@book {Muirhead1982,
  author    = {Muirhead, R. J.},
  title     = {Aspects of {M}ultivariate {S}tatistical {T}heory},
  publisher = {John Wiley \& Sons, Inc.},
  address   = {New York},
  pages     = {xix+673},
  year      = {1982},
  isbn      = {0-471-09442-0},
  doi       = {10.1002/9780470316559},
  mrnumber  = {652932}
}

@article {Olkin1998,
  author   = {Olkin, I.},
  title    = {The density of the inverse and pseudo-inverse of a random matrix},
  fjournal = {Statistics \& Probability Letters},
  journal  = {Statist. Probab. Lett.},
  volume   = {38},
  number   = {2},
  pages    = {131--135},
  year     = {1998},
  doi      = {10.1016/S0167-7152(97)00163-6},
  mrnumber = {1627918}
}

@article {OsborneEtAl2013,
  author   = {Osborne, D. E. and Patrangenaru, V. and Ellingson, L. and Groisser, D. and Schwartzman, A.},
  title    = {Nonparametric two-sample tests on homogeneous {R}iemannian manifolds, {C}holesky decompositions and diffusion tensor image analysis},
  fjournal = {Journal of Multivariate Analysis},
  journal  = {J. Multivariate Anal.},
  volume   = {119},
  pages    = {163--175},
  year     = {2013},
  doi      = {10.1016/j.jmva.2013.04.006},
  mrnumber = {3061421}
}

@article {MR4358612,
  author   = {Ouimet, F.},
  title    = {A symmetric matrix-variate normal local approximation for the {W}ishart distribution and some applications},
  fjournal = {Journal of Multivariate Analysis},
  journal  = {J. Multivariate Anal.},
  volume   = {189},
  pages    = {Paper No. 104923, 17 pp.},
  year     = {2022},
  doi      = {10.1016/j.jmva.2021.104923},
  mrnumber = {4358612}
}

@article {PierpaoliJezzardBasserBarnettDiChiro1996,
  author   = {Pierpaoli, C. and Jezzard, P. and Basser, P. J. and Barnett, A. and {Di Chiro}, G.},
  title    = {Diffusion tensor {MR} imaging of the human brain},
  fjournal = {Radiology},
  journal  = {Radiology},
  volume   = {201},
  number   = {3},
  pages    = {637--648},
  year     = {1996},
  doi      = {10.1148/radiology.201.3.8939209}
}

@book {RasmussenWilliams2006,
  author    = {Rasmussen, C. E. and Williams, C. K. I.},
  title     = {Gaussian {P}rocesses for {M}achine {L}earning},
  publisher = {MIT Press},
  address   = {Cambridge, MA},
  pages     = {xviii+248},
  year      = {2006},
  isbn      = {978-0-262-18253-9},
  doi       = {10.7551/mitpress/3206.001.0001},
  mrnumber  = {2514435}
}

@incollection {Richards2010,
  author    = {Richards, D. S. P.},
  title     = {Functions of matrix argument},
  booktitle = {{NIST} {H}andbook of {M}athematical {F}unctions},
  editor    = {Olver, F. W. J. and Lozier, D. W. and Boisvert, R. F. and Clark, C. W.},
  publisher = {Cambridge University Press},
  address   = {Cambridge},
  chapter   = {35},
  pages     = {767--774},
  year      = {2010},
  isbn      = {978-0-521-14063-8},
  mrnumber  = {2655375}
}

@book {ScholkopfSmola2002,
  author    = {Sch{\"o}lkopf, B. and Smola, A. J.},
  title     = {Learning with {K}ernels: {S}upport {V}ector {M}achines, {R}egularization, {O}ptimization, and {B}eyond},
  publisher = {The MIT Press},
  address   = {Cambridge, MA},
  pages     = {xviii+626},
  year      = {2001},
  isbn      = {978-0-262-19475-4},
  doi       = {10.7551/mitpress/4175.001.0001}
}

@article {Schwartzman2016Lognormal,
  author   = {Schwartzman, A.},
  title    = {Lognormal distributions and geometric averages of symmetric positive definite matrices},
  fjournal = {International Statistical Review. Revue Internationale de Statistique},
  journal  = {Int. Stat. Rev.},
  volume   = {84},
  number   = {3},
  pages    = {456--486},
  year     = {2016},
  doi      = {10.1111/insr.12113},
  mrnumber = {3580425}
}

@article {SchwartzmanDoughertyTaylor2010,
  author   = {Schwartzman, A. and Dougherty, R. F. and Taylor, J. E.},
  title    = {Group comparison of eigenvalues and eigenvectors of diffusion tensors},
  fjournal = {Journal of the American Statistical Association},
  journal  = {J. Amer. Statist. Assoc.},
  volume   = {105},
  number   = {490},
  pages    = {588--599},
  year     = {2010},
  doi      = {10.1198/jasa.2010.ap07291},
  mrnumber = {2724844}
}

@article {WhitcherEtAl2007,
  author   = {Whitcher, B. and Wisco, J. J. and Hadjikhani, N. and Tuch, D. S.},
  title    = {Statistical group comparison of diffusion tensors via multivariate hypothesis testing},
  fjournal = {Magnetic Resonance in Medicine},
  journal  = {Magn. Reson. Med.},
  volume   = {57},
  number   = {6},
  pages    = {1065--1074},
  year     = {2007},
  doi      = {10.1002/mrm.21229}
}

@article {YouPark2021,
  author   = {You, K. and Park, H.-J.},
  title    = {Re-visiting {R}iemannian geometry of symmetric positive definite matrices for the analysis of functional connectivity},
  fjournal = {NeuroImage},
  journal  = {NeuroImage},
  volume   = {225},
  pages    = {117464},
  year     = {2021},
  doi      = {10.1016/j.neuroimage.2020.117464}
}

@article {YouPark2022,
  author   = {You, K. and Park, H.-J.},
  title    = {Geometric learning of functional brain network on the correlation manifold},
  fjournal = {Scientific Reports},
  journal  = {Sci. Rep.},
  volume   = {12},
  number   = {1},
  pages    = {17752},
  year     = {2022},
  doi      = {10.1038/s41598-022-21376-0}
}

\end{document}